\documentclass[11pt,letterpaper]{article}
\usepackage{fullpage}
\usepackage[margin=1.038in]{geometry}
\usepackage{multirow}
\usepackage{url}
\usepackage[utf8]{inputenc}
\usepackage{amsmath,amssymb,amsfonts}
\usepackage{graphicx}
\usepackage{color}
\usepackage{fancyhdr}
\usepackage{tikz}
\usepackage{soul}
\usepackage{comment}
\usepackage{bm}
\usepackage{enumitem,kantlipsum}
\usepackage{wrapfig}
\usepackage[compact]{titlesec}
\titlespacing*{\section} {0pt}{1ex}{1ex}
\titlespacing*{\subsection} {0pt}{1ex}{1ex}
\titlespacing*{\subsubsection}{0pt}{1ex}{1ex}


\usepackage{amsopn}

\usepackage{amsthm}
\bibliographystyle{ieeetr}
\usepackage[square,sort,comma,numbers]{natbib}
 %
 %
 %
 %
 %

\DeclareMathOperator*{\argmin}{\arg\!\min}

\newcommand{\real}{{\rm{I\hspace{-.75mm}R}}}






\renewcommand*{~}{\relax\ifmmode\sim\else\nobreakspace{}\fi}

\newcommand{\Var}{\mbox{Var}}

\newcommand{\Cov}{\mbox{Cov}}







\newcommand{\Ebar}{\bar{E}}
\newcommand{\Fbar}{\bar{F}}




\newcommand{\rhohat}{\hat{\rho}}
\newcommand{\sigmahat}{\hat{\sigma}}


\newcommand{\Ftilde}{\widetilde{F}}
\newcommand{\Etilde}{\widetilde{E}}



\newcommand{\BFx}{\bm{x}}

\newcommand{\BFz}{\bm{z}}

\newcommand{\BFG}{\bm{G}}

\newcommand{\BFX}{\bm{X}}
\newcommand{\BFS}{\bm{S}}


\newcommand{\BFnu}{\bm{\nu}}










\newcommand{\sfbf}{\mathsf{bf}}

\newcommand{\sfH}{\mathsf{H}}
\newcommand{\sfM}{\mathsf{M}}

\newcommand{\mcA}{\mathcal{A}}
\newcommand{\mcB}{\mathcal{B}}

\newcommand{\mcD}{\mathcal{D}}

\newcommand{\mcF}{\mathcal{F}}

\newcommand{\mcH}{\mathcal{H}}

\newcommand{\mcK}{\mathcal{K}}
\newcommand{\mcL}{\mathcal{L}}

\newcommand{\mcO}{\mathcal{O}}

\newcommand{\mcX}{\mathcal{X}}





\newcommand{\mbE}{\mathbb{E}}

\newcommand{\mbN}{\mathbb{N}}

\newcommand{\mbP}{\mathbb{P}}



    \setcounter{topnumber}{2}
    \setcounter{bottomnumber}{2}
    \setcounter{totalnumber}{4}     

\pdfminorversion=6

\usepackage{array, caption, floatrow, tabularx, makecell, booktabs}%
\captionsetup{labelfont = sc}
\setcellgapes{3pt}
\usepackage{algorithmic}
\usepackage{algorithm}

\newtheorem{theorem}{Theorem}

\newtheorem{lemma}{Lemma}

\newtheorem{proposition}{Proposition}

\newtheorem{corollary}[theorem]{Corollary}

\newtheorem{definition}{Definition}

\newtheorem{assumption}{Assumption}

\newtheorem{remark}{Remark}

\usepackage{bm}
\usepackage{multicol}
\usepackage{lipsum}
\usepackage{mwe}
\usepackage{graphicx}
\usepackage{subfig}
\usepackage{amssymb}
\usepackage{booktabs}
\usepackage{enumitem}
\usepackage{cleveref}


\usepackage{multirow}
\usepackage{amsfonts}
\usepackage{graphicx}
\usepackage{epstopdf}
\usepackage{algorithmic}
\usepackage{algorithm}

\usepackage{qcircuit}
\usepackage{physics}

\usepackage{authblk}

\ifpdf
  \DeclareGraphicsExtensions{.eps,.pdf,.png,.jpg}
\else
  \DeclareGraphicsExtensions{.eps}
\fi

\makeatletter
\newcommand{\ALOOP}[1]{\ALC@it\algorithmicloop\ #1%
  \begin{ALC@loop}}
\newcommand{\ENDALOOP}{\end{ALC@loop}\ALC@it\algorithmicendloop}

\newcommand{\algorithmicbreak}{\textbf{break}}
\newcommand{\BREAK}{\STATE \algorithmicbreak}
\makeatother


\parskip .05in 
\title{Adaptive Sampling-Based Bi-Fidelity Stochastic Trust Region Method for Derivative-Free Stochastic Optimization}
\author[1]{Yunsoo Ha\thanks{yunsoo.ha@nrel.gov}}
\author[1]{Juliane Mueller\thanks{juliane.mueller@nrel.gov}}
\affil[1]{Computational Science Center, National Renewable Energy Laboratory, \protect\\ 15013 Denver West Parkway, Golden, 80401, Colorado, USA}
\date{}

\begin{document}
\vspace{-3 mm}

\maketitle

\begin{abstract}
Bi-fidelity stochastic optimization { has gained } increasing { attention as an efficient approach to reduce computational costs } by { leveraging } a \textit{low-fidelity} (LF) { model to optimize an } expensive \textit{high-fidelity} (HF) { objective}. In this paper, we { propose } ASTRO-BFDF, an adaptive sampling trust region method specifically designed for unconstrained bi-fidelity stochastic derivative-free optimization problems. { In } ASTRO-BFDF, the LF function serves two purposes: { (i) } to identify better iterates for the HF function { when the optimization process indicates a high correlation between them, } and { (ii) } to reduce the variance of the HF function estimates { using } bi-fidelity Monte Carlo (BFMC). { The algorithm dynamically determines sample sizes while adaptively choosing between crude Monte Carlo and BFMC to balance the trade-off between optimization and sampling errors. } We { prove }that the iterates generated by ASTRO-BFDF converge to the first-order stationary point almost surely. Additionally, we demonstrate the { effectiveness } of { the } proposed algorithm { through numerical experiments } on synthetic problems and simulation optimization problems { involving }discrete event { systems}.

\end{abstract}

\section{Introduction}
\label{sec:intro}
We consider the stochastic optimization (SO) problem 
\begin{equation}
    \min_{\BFx\in \real^d} f^h(\BFx) = \mbE_{{\Xi^h}}[F^h(\BFx,{\xi^h})], 
    \label{eq:problem}
\end{equation}
where $f^h:\real^d \rightarrow \real$ is nonconvex and bounded from below, $F^h:\real^d\times{\Xi^h} \rightarrow \real$ is a random function, and {$\xi^h:\Omega\rightarrow\Xi^h$ is a random element. In particular, we are interested in the case where $f^h(\BFx)$ can only be observed with noise through evaluations of $F^h(\BFx,\xi^h)$. Consequently, an estimator of $f^h(\BFx)$ is obtained by repeatedly evaluating $F^h$}, as shown below:
\begin{equation} \label{eq:crude-mc}
\Fbar^h(\BFx,n) = \frac{1}{n} \sum_{i=1}^n F^h(\BFx,{\xi_i^h}),
\end{equation}
{and an estimate of the variance is computed as }
$$(\sigmahat^h)^2(\BFx,n)=n^{-1}\sum_{i=1}^n \left( F^h(\BFx,{\xi_i^h})- \Fbar^h(\BFx,n)\right)^2.$$
{Furthermore, we assume that derivative information is not directly available from a Monte Carlo simulation, and each evaluation of $F^h$ is computationally expensive. Consequently, the main challenge in solving Problem~\eqref{eq:problem} lies in the need for a large number of model evaluations. One way to mitigate this burden is through bi-fidelity techniques, which leverage both a high-fidelity (HF) model and a low-fidelity (LF) model that is computationally cheaper but less accurate. For example, the LF model primarily helps identify promising solution candidates, while the HF model is used to further evaluate and refine those candidates. Hence, }we assume that there exists an additional stochastic simulation oracle capable of approximating $F^h(\BFx,{\xi^{h}})$ at a lower cost. This cost-effective oracle, termed the LF simulation, generates $F^{\ell}:\real^d\times{\Xi^{\ell}} \rightarrow \real$ with $f^{\ell}(\BFx) = \mbE_{{\Xi^{\ell}}}[F^{\ell}(\BFx,{\xi^{\ell}})],$ {where $\xi^{\ell}:\Omega\rightarrow\Xi^{\ell}$ is a random element. This framework is commonly referred to as \textit{bi-fidelity stochastic optimization} (BFSO) and has recently gained popularity due to advancements in digital twins, with applications in manufacturing~\cite{hsieh2017equipment}, production~\cite{kang2020multifidelity,zhang2022improved}, and engineering design~\cite{chaudhuri2018multifidelity,hamdia2022multilevel,pisaroni2017multi}.} 

{HF and LF stochastic simulation models are typically developed through hierarchical modeling, where the HF model captures all relevant system details, while the LF model provides a simplified approximation. For example, in manufacturing systems, the HF model simulates a complete process including all machines, whereas the LF model is obtained by excluding machines that are not critical to the performance metrics (see Figure 8 in \cite{zhang2022improved}). In airfoil design, fidelity is controlled by mesh resolution: the HF model uses a fine grid with many nodes around the airfoil, whereas LF models are generated by coarsening the mesh, thereby reducing computational cost (see Figure 17 in \cite{pisaroni2017continuation}). In discrete event simulation (DES), fidelity levels can be adjusted by changing simulation run length; the HF model uses long simulation runs to obtain accurate estimates, whereas LF models use shorter runs for faster evaluation \cite{chen2017stochastic}.}


{Multi-fidelity approaches have primarily been used in Bayesian optimization (BO) with Co-kriging serving as the surrogate model due to its flexibility~\cite{do2023multi}. Most of the existing literature focuses on deterministic engineering design problems~\cite{charayron2023bi,meliani2019multi,shu2021multi,tran2019sbf,xu2023bi}, since BO tends to perform poorly in the presence of high stochastic noise \cite{daulton2021parallel,letham2019noisebo,picheny2014noisy}. Specifically, the sample size $n$ must be sufficiently large to ensure accurate function estimates, which becomes especially challenging when dealing with heteroskedastic variances~\cite{diwale2022bayesian}. Moreover, the LF function should remain closely correlated with the HF function throughout the entire search space, as BO is a global optimization method~\cite{foumani2023effects}. Otherwise, the LF function can negatively impact the surrogate model, reducing its accuracy~\cite{andres2024characterising,toal2015some}. For instance, the LF function may provide no useful information about the HF function, which disrupts the optimization process, as illustrated in Figure \ref{fig:bf-example}. } 
Hence, the specifics of how and when to utilize the LF simulation oracle have {remained } elusive, prompting us to pose two overarching questions:

\begin{enumerate}
    \item[Q1.] When is it appropriate to utilize the LF simulation oracle, and when should it not be used during optimization?
    \item[Q2.] What sample sizes for HF and LF simulation oracles are necessary to attain sufficiently accurate function estimates for optimization?
\end{enumerate}

In this paper, we propose a \textit{sample-efficient} {local search } solver for BFSO, aiming to address questions Q1 and Q2. We begin by introducing relevant existing sampling methods that have been used for sample-efficient uncertainty quantification, regardless of their purpose for optimization.

\begin{figure} [htp]
\centering
\includegraphics[width=0.6\columnwidth]{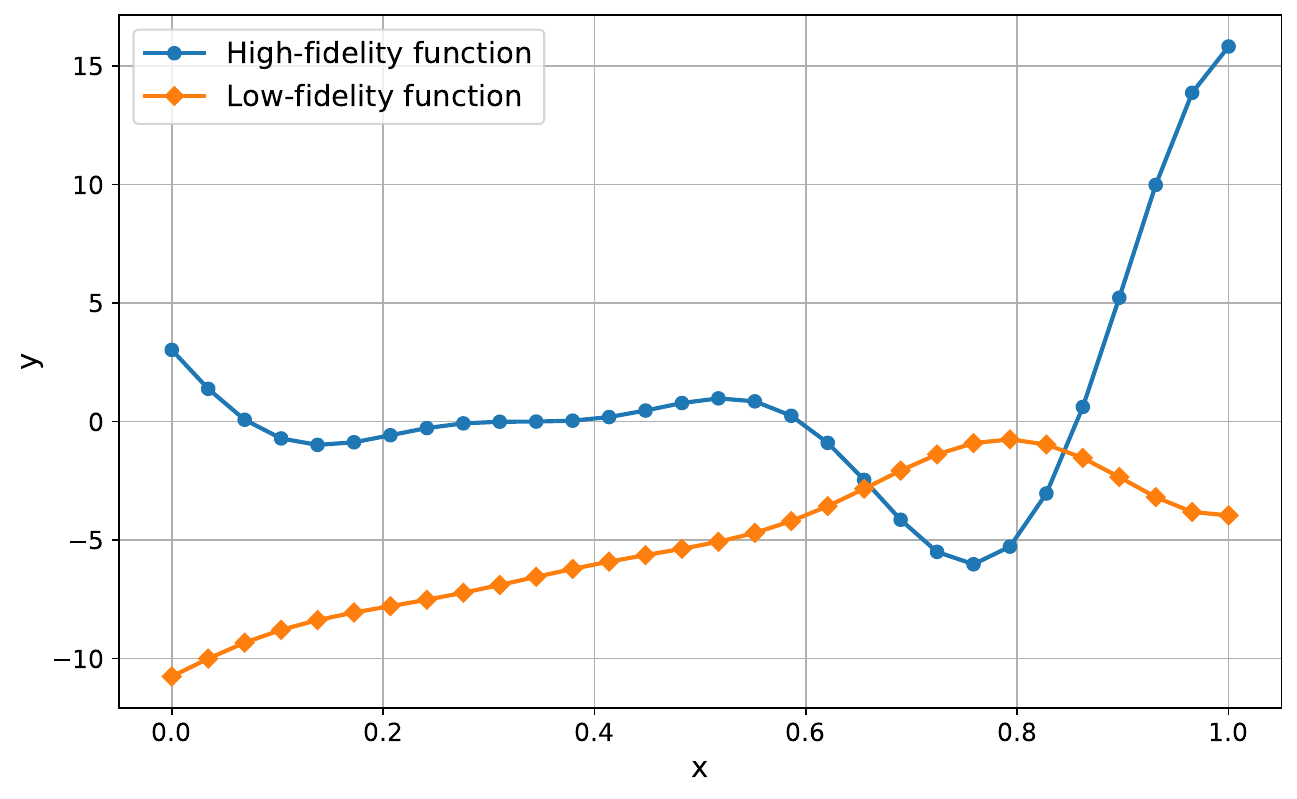}
\caption{An illustration of bi-fidelity functions{, where the LF function serves as a harmful source.} 
} \label{fig:bf-example}
\end{figure}


\subsection{Adaptive Sampling}
In derivative-free stochastic optimization, {a suitable sample size is essential for reducing computational cost while ensuring the convergence of the solution sequence $\{\BFX_k\}$ to a stationary point. } Typically, {when the random samples 
$\{\xi_i^h\}_{i=1}^n$ vary across evaluations of $x$, } a fixed sample size may {fail to } ensure { almost sure convergence } because the stochastic error may exceed the gap in function values between { successive iterates}. 
Therefore, an Adaptive Sampling (AS) strategy has been { incorporated into } iterative algorithms~\cite{bollapragada2018adaptive,bollapragada2023adaptive,Sara2018ASTRO}. 
The AS strategy dynamically { determines } the sample size by { balancing } an estimation error at each point { against an } optimality measure {such as gradient approximations}. With the estimation error continuously updated through replicated function evaluations, the AS strategy { selects a dynamically adjusted sample size } based on observations generated at the design point of interest. 
The AS strategy has been proposed { using } a crude Monte Carlo (CMC) estimation (See~\eqref{eq:crude-mc}) for scenarios where only one simulation oracle is available~\cite{bollapragada2018adaptive,bollapragada2024asdf,Sara2018ASTRO}. However, when multiple simulation oracles of different fidelities are available, { a variance reduction technique known as multi-fidelity Monte Carlo (MFMC)~\cite{karen2016mfmc} can be employed, as introduced in the next section.}

\subsection{Bi-fidelity Monte Carlo}
In the domain of uncertainty quantification, a function estimate for a single design point is typically derived through a CMC estimation. However, due to the slow convergence rate of CMC, where {the estimation error decays at a rate proportional to $1/\sqrt{n}$, } it may be impractical to obtain sufficiently accurate function estimates within a reasonable timeframe. {To improve computational efficiency, bi-fidelity Monte Carlo (BFMC) has been proposed as a variance reduction methods~\cite{karen2016mfmc}, with the unbiased estimator defined as:}
\begin{equation} \label{eq:bfmc}
    \Fbar^{\sfbf}(\BFx,n,v,c) = \frac{1}{n} \sum_{i=1}^{n} F^h(\BFx,{\xi^h_i}) - c \left(\frac{1}{n} \sum^n_{{j}=1} F^{\ell}(\BFx,{\xi^{\ell}_j}) - \frac{1}{v} \sum^v_{{j}=1} F^{\ell}(\BFx,{\xi^{\ell}_{j}})\right),
\end{equation}
{where $v$ is the sample size for the LF oracle and $c\in\real$. }
The variance of $\Fbar^{\sfbf}(\BFx,n,v,c)$ { is }
\begin{equation} \label{eq:variance-bfmc}
\begin{split}
    & c^2 (\Var(\Fbar^l(\BFx,n)) + \Var(\Fbar^l(\BFx,v))) - 2c \Cov(\Fbar^h(\BFx,n), \Fbar^l(\BFx,n)) \\
    & + 2c \Cov(\Fbar^h(\BFx,n), \Fbar^l(\BFx,v)) - 2c^2 \Cov(\Fbar^l(\BFx,n), \Fbar^l(\BFx,v)) + \Var(\Fbar^h(\BFx,n)).
\end{split}
\end{equation}
Therefore, variance reduction becomes feasible { when strong correlation exists between HF and LF estimators and appropriate values of $n,v,$ and $c$ are chosen. }
{ The effectiveness of such variance reduction has been empirically demonstrated in \cite{peherstorfer2018mf,yao2022mfexam}. }

In our proposed algorithm, we have developed an innovative AS strategy, referred { to } as bi-fidelity Adaptive Sampling (BFAS), that leverages both LF and HF oracles. Our approach dynamically employs BFMC and CMC, guided by { covariance and variance estimates } for the functions. {Although BFAS can, in principle, be applied to a broad range of iterative solvers, } we focus exclusively on stochastic trust region (TR) algorithms { for solving Problem~ \eqref{eq:problem}}.

\subsection{Stochastic Trust Region Algorithms for Derivative-free Stochastic Optimization}
\label{sec:STRO}

The popularity of stochastic TR algorithms has recently surged for addressing~\eqref{eq:problem} due to their robustness, which comes from their ability to self-tune and naturally utilize { approximate } curvature information in determining step lengths. Stochastic TR algorithms~\cite{blanchet2019convergence,Chang2013STRONG,chen2018storm,curtis2023worst} typically { involve } the following four steps in { each }iteration $k$:
\begin{enumerate}
    \item[(a)] (model construction) a local model is constructed to approximate the objective function $f$ by utilizing specific design points and their function estimates within a designated area of confidence, i.e., { the } TR, typically defined as an {$\ell_2$ ball } with radius $\Delta_k$ centered { at } the current iterate $\BFX_k$;
    \item[(b)] (subproblem minimization) a candidate point $\BFX_k^s$ is obtained by approximately minimizing the local model within the TR;
    \item[(c)] (candidate evaluation) the objective function at $\BFX_k^s$ is estimated by querying the oracle, and depending on this evaluation, $\BFX_k^s$ is either accepted or rejected; and
    \item[(d)] (TR management) if $\BFX_k^s$ is accepted, it becomes the { next } iterate $\BFX_{k+1}$, and the TR radius $\Delta_k$ is either enlarged or remains unchanged; conversely, if $\BFX_k^s$ is rejected, $\BFX_{k}$ { remains } as $\BFX_{k+1}$, and $\Delta_k$ { is reduced to enable the construction of a more accurate local model.}
\end{enumerate}

As described in Section \ref{sec:bi-fidelity-TRO}, our proposed algorithm {executes } the aforementioned four steps multiple times within a single iteration to address Q1, utilizing both HF and LF simulation oracles. Specifically, when { a strong } correlation between the LF and HF function is {estimated within the TR based on the } optimization history, the local model { are first constructed } for the LF function. {If the LF-based local model fails to produce a candidate with a lower HF objective estimate, a local model for the HF function is then constructed.}

\subsection{Summary of Results and Insight.}
{In this paper, } we propose a novel stochastic TR method, { which employs two separate TRs to handle HF and LF functions, and incorporates an adaptive sampling scheme specifically designed for BFSO, } named ASTRO-BFDF. {Our key contributions are as follows:}
\begin{enumerate}
    \item[(a)] Addressing Q1, {we introduce an adaptive correlation constant ($\alpha_k$ in Algorithm \ref{alg:TRO-MFDF}), capturing local correlation between the HF and LF functions.} 
    \item[(b)] {Addressing Q2, } we suggest a new adaptive sampling algorithm, named BFAS, {which leverages both HF and LF oracles and dynamically controls $n, v,$ and $c$ through updates from each new sample.}
    \item[(c)] We prove the almost sure convergence, i.e., $\lim_{k\rightarrow\infty}\| \nabla f^h(\BFX_k) \| = 0 $ w.p.1, of ASTRO-BFDF. 
    \item[(d)] {Extensive numerical experiments demonstrate that ASTRO-BFDF consistently outperforms a range of bi-fidelity and single-fidelity solvers across varying correlation levels between the HF and LF functions, as well as different variances of HF and LF stochastic noise.}
\end{enumerate}

\section{Preliminaries}
In this section, we provide key definitions, standing assumptions, and some useful results that will be invoked in the convergence analysis of the proposed algorithm. 

\subsection{Notation} 

We represent vectors using bold font; for instance, $\BFx = (x_1, x_2, \cdots, x_d) \in \real^d$ refers to a vector in $d$-dimensional space. Sets are denoted with calligraphic fonts, while matrices are shown in sans serif fonts. The default norm, $\|\cdot\|$, is the { $\ell_2$ } norm.  The closed ball of radius $\Delta>0$ centered at $\BFx^{0}$ is $\mcB(\BFx^{0};\Delta)=\{\BFx\in\real^d:\|\BFx-\BFx^{0}\|\leq\Delta\}$. For a sequence of sets ${\mcA_n}$, ${\mcA_n \ \text{i.o.}}$ denotes $\limsup_{n\rightarrow\infty} \mcA_n$, {where ``i.o." stands for ``infinitely often." }
We write $f(\BFx)=\mcO(g(\BFx))$ if there are positive constants $\varepsilon$ and $m$ such that $|f(\BFx)|\le mg(\BFx)$ for all $\BFx$ with $0 < \|\BFx\| < \varepsilon$. Capital letters denote random scalars and vectors. For a sequence of random vectors $\{\BFX_k,k\ge1\}$, $\BFX_k \xrightarrow{w.p.1}\BFX$ denotes almost sure convergence. ``iid" means independent and identically distributed, and ``w.p.1" means with probability 1.
The superscripts $h$ and $l$ indicate that the terminology is related to high-fidelity and low-fidelity simulations, respectively.
The terms $\sigmahat^h(\BFx, n)$ and $\sigmahat^{\ell}(\BFx, n)$ are the standard deviation estimates of HF and LF functions at $\BFx$ with sample size $n$, while $\sigmahat^{h,\ell}(\BFx, n)$ is the covariance estimate between them.

\subsection{Key Definitions}
{We introduce the definition of a stochastic interpolation model, used as the local model in ASTRO-BFDF.
}
\begin{definition}[stochastic interpolation models: {Definition 2.3 in \cite{Sara2018ASTRO}}]
    Given $\BFX_k=\BFX_k^0\in\real^d$ and $\Delta^q_k>0$, 
    let $\Phi(\BFx)=[\phi_0(\BFx),\phi_1(\BFx), \dots, \phi_p(\BFx)]$ be a polynomial basis on $\real^d$. With $p=d(d+3)/2$, $q \in \{h,\ell\}$ and the design set $\mcX_k=\{{\BFX_k^{i}}\}_{i=0}^{p}\subset \mcB(\BFX_k;\Delta^q_k)$, we find $\BFnu^q_k = [\nu^q_{k,0},\nu^q_{k,1}, \dots, \nu^q_{k,p}]^\intercal$ such that 
    \begin{equation}\label{eq:syslineq}
        \sfM(\Phi, \mcX_k) \BFnu^q_k = \left[ \Fbar^q_k(\BFX_k^0,N(\BFX_k^0))), \Fbar^q_k(\BFX_k^1,N(\BFX_k^1)), \dots, \Fbar^q_k(\BFX_k^p,N(\BFX_k^p)) \right]^\intercal,
    \end{equation}
    where \\ 
    \begin{equation*}
        \sfM(\Phi, \mcX_k) = 
        \begin{bmatrix}
        \phi_1(\BFX_k^{0}) & \phi_2(\BFX_k^{0}) & \cdots & \phi_p(\BFX_k^{0})  \\
        \phi_1(\BFX_k^{1}) & \phi_2(\BFX_k^{1}) & \cdots & \phi_p(\BFX_k^{1})  \\
        \vdots & \vdots & \vdots & \vdots \\
        \phi_1(\BFX_k^{p}) & \phi_2(\BFX_k^{p}) & \cdots & \phi_p(\BFX_k^{p})  \\
        \end{bmatrix}.
    \end{equation*}
    {We say $\mcX_k$ is poised provided $\sfM(\Phi, \mcX_k)$ is nonsingular. }
    If there exists a solution to~\eqref{eq:syslineq}, then the function $M^q_k:\mcB(\BFX_k;\Delta^q_k) \to \real$, defined as $M^q_k(\BFx) = \sum_{j=0}^{p} \nu^q_{k,j} \phi_{j}(\BFx)$ is a stochastic polynomial interpolation  of estimated values of $f^q$ on $\mcB(\BFX_k;\Delta^q_k)$. 
    In particular, if $\BFG^q_k=
    \begin{bmatrix}
    \nu^q_{k,1} & \nu^q_{k,2} & \cdots & \nu^q_{k,d} 
    \end{bmatrix}^\intercal$ and $\sfH^q_k$ is a symmetric $d\times d$ matrix with elements uniquely defined by $(\nu^q_{k,d+1},\nu^q_{k,d+2},\ldots, \nu^q_{k,p} 
    )$, then we can define the stochastic quadratic model $M^q_k: \mcB(\BFX_k;\Delta^q_k)\to\real$, as
\begin{equation}
    M^q_k(\BFx) = \nu^q_{k,0} +  (\BFx-\BFX_k)^\intercal \BFG^q_k + \frac{1}{2} (\BFx-\BFX_k)^\intercal \sfH^q_k(\BFx-\BFX_k).\label{eq:mdefn}
\end{equation}
\label{defn:polyintermd}
\end{definition}

{The next definition states that the approximate solution to the subproblem in ASTRO-BFDF ensures a sufficient decrease in the local model (see Theorem 10.1 in \cite{katya:DFObook}).}

\begin{definition}[Cauchy reduction: {Definition 2.5 in \cite{Sara2018ASTRO}}] Given $\BFX_k \in \real^d$,  $\Delta^q_k > 0$, $q\in\{h,\ell\}$, and a function $M_k^q:\mcB(\BFX_k;\Delta^q_k) \to \real$ obtained following Definition~\ref{defn:polyintermd}, $\BFS_k^c$ is called the Cauchy step if 
\begin{equation*}
    M^q(\BFX_k)-M^q(\BFX_k+\BFS_k^c) \ge \frac{1}{2}\|\nabla M^q(\BFX_k)\|\min\left\{ \frac{\|\nabla M^q(\BFX_k)\|}{\|\nabla^2 M^q(\BFX_k)\|}, \Delta^q_k \right\}.
\end{equation*}
When $\|\nabla^2 M^q_k(\BFX_k)\|=0$, we assume $\|\nabla M^q(\BFX_k)\|/\|\nabla^2 M^q(\BFX_k)\|=+\infty$. The Cauchy step is derived by minimizing the model $M^q_k(\cdot)$ along the steepest descent direction within $\mcB(\BFX_k;\Delta^q_k)$, making it easy and quick to compute.
\label{defn:cauchyred}
\end{definition}

{Lastly, we introduce the concepts of filtration and stopping time, which play a crucial role in analyzing the behavior of ASTRO-BFDF and BFAS.}

\begin{definition}[filtration and stopping time: {Section 35 in \cite{billingsley1995probability}}] A filtration $\{\mcF_{k}\}_{k \ge 1}$ on a probability space $(\Omega, \mathbb{P}, \mcF)$ is a sequence of $\sigma$-algebras, each contained within the next, such that for all $k$, $\mcF_{k}$ is a subset of $\mcF_{k+1}$, and all are subsets of $\mcF$. 
A function $N: \Omega \rightarrow \{0, 1, 2, \dots, \infty\}$ is referred to as a stopping time with respect to the filtration $\mcF$ if the set $\{\omega \in \Omega : N(\omega) = n\}$ is an element of $\mcF$ for every $n < \infty$.
\end{definition}

\subsection{Standing Assumptions}

We now { state } the standing assumptions {  underlying our analysis}. Assumption~\ref{assum:fn} { specifies } the characteristics of the functions $f^h$ and $f^{\ell}$, {which } precisely { define } the problem { under consideration}.

\begin{assumption}[function]
    The HF function $f^h$ and the LF function $f^{\ell}$ are continuously differentiable in an open domain $\Omega$, $\nabla f^h$ and $\nabla f^{\ell}$ are Lipschitz continuous in $\Omega$ with constant $\kappa_{Lg}>0$. 
    \label{assum:fn}
\end{assumption}

We make the next assumption on the higher moments of the stochastic noise { following a Bernstein-type condition adapted to a martingale setting}. Random variables fulfilling Assumptions~\ref{assum:martingale} exhibit a subexponential tail behavior. 
 
\begin{assumption}[stochastic noise]
    The Monte Carlo oracles generate iid random variables $F^q(\BFX^{i}_k,\xi^q_j) = f^q(\BFX^{i}_k)+E^{i,q}_{k,j}$ with $E^{i,q}_{k,j}\in\mcF_{k,j}$ for $i \in \{0,1,2,\dots,p,s\}$ and $q \in \{h,\ell\}$, where $\BFX_k^s$ is the candidate iterate at iteration $k$ and $\mcF_k=\mcF_{k,0}\subset\mcF_{k,1}\subset\dots\subset \mcF_{k+1}$ for all $k$. Then the stochastic errors $E_{k,j}^{i,q}$ are independent of $\mcF_{k-1}$, $\mbE[E_{k,j}^{i,q}\mid\mcF_{k,j-1}]=0$, and there exists $(\sigma^q)^2>0$ and $b^q>0$ such that for a fixed $n$, 
    \begin{align}
    \label{eq:stochstic-noise-assumption}
    \frac{1}{n}\sum_{j=1}^n\mbE[|E_{k,j}^{i,q}|^m\mid\mcF_{k,j-1}]\leq\frac{m!}{2}(b^q)^{m-2}(\sigma^q)^2,\ \forall m=2,3,\cdots,\forall k.
    \end{align}
    \label{assum:martingale}
\end{assumption}

\subsection{Useful Results}

In this section, we present useful results that will be invoked to prove the almost sure convergence of ASTRO-BFDF. {We begin by introducing Bernsetin inequality for martingales.} 

{
\begin{lemma}[Bernsetin inequality for martingales: Lemma 2.5 by \cite{ha2023}]
\label{lem:bernstein}
Let $(\xi_i,\mcF_i)_{i=0,1,\dots}$ be a martingale difference sequence on some probability space $(\Omega,\mcF,P)$ with $\mbE\left[\xi_i \vert \mcF_{i-1}\right] = 0$, where $\xi_0=0$ and $\{\Omega,\emptyset\} = \mcF_0 \subseteq \mcF_1 \subseteq \mcF_2 \subseteq \cdots \subseteq \mcF_n \subseteq \mcF$ is a sequence of increasing filtrations.
Furthermore, assume that there exist constants $b>0$ and  $\sigma_{\xi}^2>0$ such that for any $m \in \{2,3,\dots\}$, and any $i \in \{0,1,\dots\}$, $$\mbE\left[ |\xi_i|^m \vert \mcF_{i-1} \right] \leq \frac{1}{2}\, m! \, b^{m-2} \sigma_{\xi}^2.$$ 
Then, for any $c>0$ and any $n \in \mbN$, 
$$\mbP\left\{\sum_{i=1}^n \xi_i \geq nc \right\} \leq \exp\left\{- \frac{nc^2}{2(bc + \sigma_{\xi}^2)} \right\}. $$
\end{lemma}
}

The {next } result demonstrates that, under Assumption \ref{assum:martingale}, the estimate of the stochastic errors is bounded by the square of the TR radius when a specific adaptive sampling rule is applied. {This result matches the guaranteed estimation accuracy given by $\mbP\{| N(\BFX_k^i)^{-1} \sum_{j=1}^{N(\BFX_k^i)} E^{i,q}_{k,j}|\geq c_f(\Delta_k^q)^2)\} \le \alpha_k$ for any given $c_f > 0$, where $\alpha_k$ increases gradually, driven by a  logarithmically increasing sequence $\lambda_k$. This type of inflation factor is a common approach in sequential estimation settings \cite{ghosh:sequential1997}.} 

\begin{theorem}[Stochastic noise: {Theorem 3.2 with case (A-0) by } \cite{ha2023}]\label{thm:asfinitedelta2}
Let $c_f>0$ and $\Delta_k^q > 0$ be given and $E^{i,q}_{k,j}$ denotes the stochastic noise following Assumption \ref{assum:martingale}. {Given any $\epsilon_\lambda \in (0,1)$ and $\lambda_0 \ge 2$, suppose the adaptive sample size $N(\BFX_k^i)$ is a stopping time satisfying } $N(\BFX_k^i) \ge (\sigma_0^q)^2 \lambda_k \kappa^{-2} (\Delta_k^q)^{-4},$ {where $\sigma_0 > 0$, $\kappa > 0$, and $\lambda_k = \lambda_0 (\log k)^{1+\epsilon_\lambda}$. Then, the following series is summable:}
\begin{equation*}
    \sum_{k=1}^\infty\mbP\left\{\left|\frac{1}{N(\BFX_k^i)}\sum_{j=1}^{N(\BFX_k^i)} E^{i,q}_{k,j}\right|\geq c_f(\Delta_k^q)^2 \right\} < \infty.
\end{equation*}
\end{theorem}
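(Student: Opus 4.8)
The plan is to reduce the claim to a Bernstein-type tail bound for the martingale-difference sum $\sum_{j=1}^n E^{i,q}_{k,j}$ at a \emph{fixed} horizon $n$, and then to handle the random (stopping-time) horizon $N(\BFX_k^i)$ by conditioning and a peeling argument. Throughout I fix the indices $i,q$ and abbreviate $E_j:=E^{i,q}_{k,j}$ and $N:=N(\BFX_k^i)$. First I would condition on $\mcF_{k-1}$: by Assumption~\ref{assum:martingale} the errors $\{E_j\}_{j\ge1}$ are independent of $\mcF_{k-1}$ and form a martingale difference sequence with respect to $\{\mcF_{k,j}\}_j$ obeying the Bernstein moment condition $\frac{1}{n}\sum_j\mbE[|E_j|^m\mid\mcF_{k,j-1}]\le\frac{m!}{2}(b^q)^{m-2}(\sigma^q)^2$. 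Treating $\Delta_k^q$, and hence the lower bound $L_k:=\sigma_0^2\lambda_k\kappa^{-2}(\Delta_k^q)^{-4}$, as $\mcF_{k-1}$-measurable and therefore fixed after conditioning, the standard Bernstein inequality for such sequences gives, for each fixed $n$,
\begin{equation*}
\mbP\left\{\left|\sum_{j=1}^n E_j\right|\ge t\right\}\le 2\exp\left(-\frac{t^2}{2\left(n(\sigma^q)^2+b^q t\right)}\right).
\end{equation*}

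Second, to pass from a fixed $n$ to the stopping time $N$, I would use the hypothesis $N\ge L_k$ together with a union bound over the admissible horizons:
\begin{align*}
\mbP\left\{\left|\frac{1}{N}\sum_{j=1}^{N}E_j\right|\ge c_f(\Delta_k^q)^2\right\}
&=\sum_{n\ge L_k}\mbP\left\{N=n,\ \left|\sum_{j=1}^n E_j\right|\ge c_f(\Delta_k^q)^2 n\right\}\\
&\le\sum_{n\ge L_k}\mbP\left\{\left|\sum_{j=1}^n E_j\right|\ge c_f(\Delta_k^q)^2 n\right\}.
\end{align*}
Setting $t=c_f(\Delta_k^q)^2 n$ in the Bernstein bound makes the exponent linear in $n$: each summand is at most $2\exp(-\alpha n)$ with $\alpha=\frac{c_f^2(\Delta_k^q)^4}{2\left((\sigma^q)^2+b^q c_f(\Delta_k^q)^2\right)}$. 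Summing the resulting geometric series yields a bound of order $\exp(-\alpha L_k)$, up to the factor $2/(1-e^{-\alpha})$.

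Third, I would exploit the deliberate cancellation of $(\Delta_k^q)^4$ between $\alpha$ and $L_k$:
\begin{equation*}
\alpha L_k=\frac{c_f^2\,\sigma_0^2\,\kappa^{-2}\,\lambda_k}{2\left((\sigma^q)^2+b^q c_f(\Delta_k^q)^2\right)}.
\end{equation*}
Since $(\Delta_k^q)^2$ is bounded and $(\sigma^q)^2>0$, this is bounded below by a constant multiple of $\lambda_k$ once $\sigma_0$ is taken large enough relative to $\kappa,\sigma^q,b^q,c_f$. Hence, after conditioning, the per-iteration probability is at most $C\exp(-c\lambda_k)=C\exp\!\left(-c\lambda_0(\log k)^{1+\epsilon_\lambda}\right)$ with $C,c$ independent of the realization of $\mcF_{k-1}$; taking expectation preserves this bound. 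Because $\epsilon_\lambda>0$, the quantity $(\log k)^{1+\epsilon_\lambda}$ grows faster than $\log k$, so $\exp(-c\lambda_k)$ decays faster than any power $k^{-p}$, whence $\sum_k\exp(-c\lambda_k)<\infty$, completing the argument.

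The main obstacle is the random sample size: because $N$ is a stopping time rather than a deterministic horizon, one cannot apply a fixed-$n$ Bernstein bound directly. The peeling/union bound above resolves this using only the lower bound $N\ge L_k$; a sharper alternative would construct an exponential supermartingale $\exp\!\left(\theta\sum_{j\le n}E_j-\psi(\theta)n\right)$ and invoke optional stopping, avoiding the union over $n$ altogether. The second delicate point is purely bookkeeping: one must track the constants so that the $(\Delta_k^q)^4$ factors cancel and the final rate depends on $k$ only through $\lambda_k$. It is precisely the superlinear-in-$\log k$ growth encoded by $\epsilon_\lambda>0$ that upgrades a borderline polynomial tail into a summable one, which is why the adaptive sampling rule inflates the minimal sample size by the factor $\lambda_k$ rather than a constant.
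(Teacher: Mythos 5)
First, note that the paper does not prove this theorem itself: it is imported from \cite{ha2023}, and the surrounding text only remarks that the cited argument extends to any stopping time with $N(\BFX_k^i)\ge \sigma_0^2\lambda_k\kappa^{-2}(\Delta_k^q)^{-4}$. Your overall strategy --- a Bernstein-type bound for the martingale-difference sum, cancellation of $(\Delta_k^q)^4$ between the deviation threshold and the sample-size lower bound, and summability driven by the $(\log k)^{1+\epsilon_\lambda}$ growth of $\lambda_k$ --- is exactly the right skeleton and matches the cited proof in spirit. (One small correction: no largeness of $\sigma_0$ is needed; any positive constant $c$ in $\exp(-c\lambda_k)$ suffices, precisely because $(\log k)^{\epsilon_\lambda}\to\infty$.)

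However, your handling of the stopping time has a genuine gap. The term-by-term union bound $\sum_{n\ge L_k}2e^{-\alpha n}$ produces the prefactor $2/(1-e^{-\alpha})$, and since $\alpha=\tfrac{c_f^2(\Delta_k^q)^4}{2((\sigma^q)^2+b^qc_f(\Delta_k^q)^2)}$ this prefactor is of order $(\Delta_k^q)^{-4}$. This is not a bookkeeping nuisance: in the algorithm the trust-region radius can contract geometrically ($\Delta_k^h\ge\gamma_2^k\Delta_0^h$ is the only available lower bound), so $(\Delta_k^q)^{-4}$ can grow like $\gamma_2^{-4k}$, whereas $e^{-c\lambda_k}=k^{-c\lambda_0(\log k)^{\epsilon_\lambda}}$ decays only super-polynomially, not exponentially. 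The product $(\Delta_k^q)^{-4}e^{-c\lambda_k}$ then need not even tend to zero, so your per-iteration bound is not summable and the argument does not close. The standard repairs are precisely the ones you relegate to an aside: either (i) peel over dyadic blocks $[2^mL_k,2^{m+1}L_k)$ and apply a maximal (Freedman/Doob-type) Bernstein inequality on each block, which replaces the prefactor by $(1-e^{-\alpha L_k})^{-1}$ --- bounded, since $\alpha L_k\ge c\lambda_k\ge c\lambda_0>0$ --- or (ii) build the exponential supermartingale $\exp(\theta S_n-n\psi(\theta))$ and apply optional stopping at $N$, using $N\ge L_k$ only at the final Markov step, which yields $e^{-\alpha L_k}$ with no prefactor at all. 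Either of these, substituted for your union bound, completes the proof; as written, the main line of the argument fails.
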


Although Theorem  \ref{thm:asfinitedelta2} has been proven with the following adaptive sampling rule (See Section 3.2 of \cite{ha2023}) 
\begin{equation} \label{eq:as-origin}
    N(\BFX_k^i) = \min\biggl\{ n \in \mbN:\frac{\max\{\sigma_0,\sigmahat^h(\BFX_k^i,n)\}}{\sqrt{n}}\leq\frac{\kappa(\Delta^q_{k})^{2}}{\sqrt{\lambda_k}}\biggr\},
\end{equation}
it can also be trivially established with a stopping time $N(\BFX_k^i) \ge \mcO(\lambda_k(\Delta_k^q)^{-4})$ by employing the same logical framework. {The detailed proof is provided in Appendix \ref{apdx:proofasfinitedelta2}. }
The next result provides an upper bound for the gradient error norm at any design point within the TR when a stochastic linear or quadratic interpolation model is used. Combined with Theorem \ref{thm:asfinitedelta2}, it indicates that the gradient error norm will be bounded by the order of the TR radius after sufficiently many iterations. 

\begin{lemma}[Stochastic Interpolation Model: {Lemma 2.9 by } \cite{Sara2018ASTRO}]\label{lem:stoch-interp}      
    {Let Assumptions \ref{assum:fn} and \ref{assum:martingale} hold. }
    If $M^q_k(\BFz)$ is a stochastic linear interpolation model or a stochastic quadratic interpolation model of $f^q$ with the design set $\mcX_k=\{\BFX_k^{i}\}_{i=0}^{p}\subset \mcB(\BFX_k;\Delta_k^q)$ and corresponding function estimates $\Fbar^q(\BFX_k^i,N(\BFX_k^i))= f^q(\BFX_k^i) + \Ebar^{i,q}_k(N^i_k)$, there exist positive constants $\kappa_{eg1}$ and $\kappa_{eg2}$ such that for any $\BFz \in \mcB(\BFX_k;\Delta_k^q)$, 
    \begin{equation} \label{eq:gradient-error-ub}
        \| \nabla M^q(\BFz) - \nabla f^q(\BFz)\| \le \kappa_{eg1}\Delta^q + \kappa_{eg2}\frac{\sqrt{\sum_{i=1}^p(\Ebar^{i,q}_k(N^i_k)-\Ebar^{0,q}_k(N^0_k))}}{\Delta^q},
    \end{equation}
    where $\Ebar^{i,q}_k(N^i_k) = N(\BFX_k^i)^{-1}\sum_{j=1}^{N(\BFX_k^i)} E^{i,q}_{k,j}$.
\end{lemma}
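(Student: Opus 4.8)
The plan is to exploit the \emph{linearity} of polynomial interpolation to split the model error into a deterministic approximation error and a purely stochastic error, and then bound each separately. Since the coefficient vector $\BFnu_k^q$ in~\eqref{eq:syslineq} depends linearly on the right-hand side of function estimates, I would write $M_k^q = M_k^{q,f} + M_k^{q,E}$, where $M_k^{q,f}$ is the interpolation model built from the exact values $f^q(\BFX_k^i)$ and $M_k^{q,E}$ is the model built from the noise terms $\Ebar_k^{i,q}(N_k^i)$. The triangle inequality then gives
\[
\|\nabla M^q(\BFz)-\nabla f^q(\BFz)\| \le \|\nabla M_k^{q,f}(\BFz)-\nabla f^q(\BFz)\| + \|\nabla M_k^{q,E}(\BFz)\|,
\]
so it remains to bound the first term by $\kappa_{eg1}\Delta_k^q$ and the second by the stochastic term in~\eqref{eq:gradient-error-ub}.

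For the deterministic term I would invoke the classical fully-linear-model machinery for $\Lambda$-poised interpolation sets (cf.\ Definition~\ref{defn:fullylinear}). After shifting and scaling the design points into the unit ball via $\hat{\BFX}_k^i=(\BFX_k^i-\BFX_k)/\Delta_k^q$, the $\Lambda$-poisedness of $\mcX_k$ controls the norm of the inverse of the scaled interpolation matrix $\sfM(\Phi,\mcX_k)$. Combining this with a Taylor expansion of $f^q$ about $\BFz$ and the Lipschitz continuity of $\nabla f^q$ (and of $\nabla^2 f^q$ in the quadratic case) from Assumption~\ref{assum:fn} yields $\|\nabla M_k^{q,f}(\BFz)-\nabla f^q(\BFz)\|\le\kappa_{eg1}\Delta_k^q$, with $\kappa_{eg1}$ depending only on $\Lambda$, $d$, $\kappa_{Lg}$, and $\kappa_L$. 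This step is a direct transcription of the standard noise-free result, since $M_k^{q,f}$ interpolates exact values.

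For the stochastic term I would express $M_k^{q,E}$ in the Lagrange basis $M_k^{q,E}(\BFz)=\sum_{i=0}^p \Ebar_k^{i,q}(N_k^i)\, l_i(\BFz)$, so that $\nabla M_k^{q,E}(\BFz)=\sum_{i=0}^p \Ebar_k^{i,q}(N_k^i)\,\nabla l_i(\BFz)$. The key observation is that the Lagrange polynomials reproduce constants, hence $\sum_{i=0}^p l_i(\BFz)\equiv 1$ and therefore $\sum_{i=0}^p \nabla l_i(\BFz)=0$; eliminating $\nabla l_0$ recenters the sum at the base point $\BFX_k^0$ and gives
\[
\nabla M_k^{q,E}(\BFz)=\sum_{i=1}^p\big(\Ebar_k^{i,q}(N_k^i)-\Ebar_k^{0,q}(N_k^0)\big)\,\nabla l_i(\BFz).
\]
Applying Cauchy--Schwarz separates the noise increments from the geometry,
\[
\|\nabla M_k^{q,E}(\BFz)\|\le\sqrt{\sum_{i=1}^p\big(\Ebar_k^{i,q}(N_k^i)-\Ebar_k^{0,q}(N_k^0)\big)^2}\;\sqrt{\sum_{i=1}^p\|\nabla l_i(\BFz)\|^2},
\]
and $\Lambda$-poisedness bounds each $\|\nabla l_i(\BFz)\|$ by $\mcO(1/\Delta_k^q)$ uniformly on $\mcB(\BFX_k;\Delta_k^q)$, which produces the second term of~\eqref{eq:gradient-error-ub} with $\kappa_{eg2}$ depending only on $\Lambda$ and $d$.

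The main obstacle is establishing the scaling-invariant control $\|\nabla l_i(\BFz)\|=\mcO(1/\Delta_k^q)$, uniformly over $\BFz\in\mcB(\BFX_k;\Delta_k^q)$, with a constant that depends only on the poisedness constant $\Lambda$ and \emph{not} on the center or radius of the trust region. This is precisely where poisedness enters: one argues on the scaled unit-ball problem, where the Lagrange polynomials and their gradients are bounded by a function of $\Lambda$ alone, and then transfers the estimate back, picking up the $1/\Delta_k^q$ factor through the chain rule. Once this geometric bound and the corresponding bound on $\|\sfM(\Phi,\mcX_k)^{-1}\|$ are in hand, both the deterministic and the stochastic estimates follow routinely, and collecting the two bounds yields~\eqref{eq:gradient-error-ub}.
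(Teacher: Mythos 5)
The paper does not prove this lemma itself---it is imported verbatim from \cite{Sara2018ASTRO}---and your argument reproduces essentially the proof given there: split the interpolant by linearity into a deterministic part (handled by the standard $\Lambda$-poised fully-linear bound) and a noise part, rewrite the noise part in the Lagrange basis, use $\sum_{i}\nabla l_i(\BFz)=0$ to recenter at $\BFX_k^0$, and finish with Cauchy--Schwarz plus the $\mcO(1/\Delta_k^q)$ bound on $\|\nabla l_i\|$ from poisedness. The argument is correct (and it even recovers the square on the noise increments inside the square root, which is evidently a typo in the paper's display~\eqref{eq:gradient-error-ub}).
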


Lastly, we present the variance of BFMC estimator { under the Common Random Numbers (CRN) scheme. In the CRN setting, the same stochastic realization $\omega_i$ drives both HF and LF simulations, yielding paired samples $\xi_i^h$ and $\xi_i^{\ell}$. The induced positive correlation reduces the variance of the BFMC function estimates and the estimated function reductions between successive design points.}

\begin{lemma}[Variance of BFMC under CRN: {Lemma 3.3 by } \cite{karen2016mfmc}]
\label{lem:variance-bfmc}
{
Let $E_{k,j}^{i,q}$ denote the stochastic noise satisfying Assumption \ref{assum:martingale}.
The index $j$ corresponds to the $j$-th common random number $\omega_j$, so the pair
$(E_{k,j}^{i,h},E_{k,j}^{i,\ell})$ shares the same realization and has covariance $\sigma^{h,\ell}(\BFX_k^i)\;=\;
\operatorname{Cov}\!\bigl(E_{k,j}^{i,h},E_{k,j}^{i,\ell}\bigr).$
Then the variance of the BFMC estimator
$\Fbar^{\sfbf}(\BFX_k^i,n,v,c)$ is
\begin{equation} \label{eq:variance-bfmc-main}
    \begin{split}    
        \Var(\Fbar^{\sfbf}({\BFX_k^i},n,v,c)) = \frac{(\sigma^h({\BFX_k^i}))^2}{n} &+ c^2\left(\frac{1}{n} - \frac{1}{v} \right) (\sigma^{\ell}({\BFX_k^i}))^2 \\
        & +2c \left(\frac{1}{v}-\frac{1}{n}\right) \sigma^{h,\ell}({\BFX_k^i}),
    \end{split}
\end{equation}
for any $n,v\in\mathbb N$ with $n<v$, $c\in\mathbb R$, and
$i\in\{0,1,\dots,p,s\}$.
}

\end{lemma}

{We note that \eqref{eq:variance-bfmc} is identical to \eqref{eq:variance-bfmc-main} under Assumption \ref{assum:martingale} with CRN, since $\Cov(\Fbar^h(\BFX_k^i,n),\Fbar^{\ell}(\BFX_k^i,v)) = (\max\{n,v\})^{-1} \sigma^{h,\ell}(\BFX_k^i)$ and $\Var(\Fbar^q(\BFX_k^i,n)) = n^{-1}(\sigma^q(\BFX_k^i))^2$ for any $q \in \{h,\ell\}$ (see Lemma 3.2 by \cite{karen2016mfmc}). } 
{Since } $\sigma^h({\BFX^i_k}), \sigma^{\ell}({\BFX^i_k}),$ and $\sigma^{h,\ell}({\BFX^i_k})$ are usually unknown in reality, { we use their empirical estimates, such as } $\sigmahat^h({\BFX^i_k},n),\sigmahat^{\ell}({\BFX^i_k,v}),$ and $\sigmahat^{h,\ell}({\BFX^i_k,n})$. { Using these estimates in place of the true values does not affect the convergence of ASTRO-BFDF, but instead represents a more realistic implementation of the sampling scheme in practice. } { Lastly, we present Borel-Cantelli's First Lemma for martingales  that repeatedly invoke.
\begin{lemma}[Borel–Cantelli for martingales: Chapter 12.15 in \cite{1991wil}]
Let $\{A_n\}_{n\ge1}$ be a sequence of events on a probability space $(\Omega, \mathcal{F}, \mathbb{P})$, and let $\{\mathcal{F}_n\}$ be a filtration. If
$ \sum_{n=1}^\infty \mathbb{P}(A_n \mid \mathcal{F}_{n-1}) < \infty \quad \text{a.s.}, $
then $\mathbb{P}(A_n \text{ i.o.}) = 0.$
\end{lemma}}

\section{Adaptive Sampling Bi-fidelity Trust Region Optimization}
Similar to other stochastic TR algorithms, {our proposed adaptive sampling bi-fidelity TR algorithm (ASTRO-BFDF), which build upon } ASTRO-DF~\cite{Sara2018ASTRO}, generates { the sequence } $\{\BFX_k\}$ through the four steps outlined in Section \ref{sec:STRO}. { ASTRO-BFDF differs from ASTRO-DF in two key components, primarily due to the presence of an LF simulation oracle: }

\begin{itemize}
    \item[(1)] Sample sizes are { carefully } managed through adaptive sampling using BFMC or CMC. 
    Within this approach, two critical decisions are made. {First}, as samples { are collected}, the { algorithm determines whether to employ CMC or BFMC.} Second, it dynamically adjusts the sample sizes for both HF and LF simulation oracles { and determines } the coefficient $c$ in~\eqref{eq:bfmc}, all in real time as { sampling progresses.}
    \item[(2)] At each iteration $k$, two local models can be constructed using HF and LF simulation oracles, { each associated with } its own TR: $\Delta_k^{\ell}$ for the LF function and $\Delta_k^h$ for the HF function. The { LF-based } local model serves two purposes: (i) identifying { a } candidate solution for the next iterate, and (ii) updating the adaptive correlation constant {($\alpha_k$ in Algorithm~\ref{alg:TRO-MFDF} and~\ref{alg:TRO-LFDF}).}
\end{itemize}

We first introduce the bi-fidelity adaptive sampling (BFAS) strategy, which corresponds to the first feature.

\begin{algorithm}[htp]  
\caption{[$N_k(\BFx),V_k(\BFx),C_k(\BFx),\Ftilde_k(\BFx)$]=\texttt{BFAS}$(\BFx,\Delta_k,\lambda_k,\kappa, {s^h, s^\ell,w^h,w^{\ell}})$}
\label{alg:BFAS}
\begin{algorithmic}[1]
\REQUIRE $\BFx \in \real^d$, TR radius $\Delta_{k}$, sample size lower bound sequence $\{\lambda_k\}$, batch size $s^h < s^{\ell}$ for HF and LF oracles, adaptive sampling constant $\kappa>0$, lower bound of an initial variance approximation $\sigma_0>0$, and {costs $w^h \ge w^{\ell}$ of calling HF and LF oracles.}
{\ENSURE HF sample size $N_k(\BFx)$, LF sample size $V_k(\BFx)$, MFMC coefficient $C_k(\BFx)$, and function estimate $\Ftilde_k(\BFx)$}

\STATE \label{ASBF:set-n} Set $n = (\sigma_0)^2\lambda_k(\kappa^2\Delta_k^4)^{-1}$ and $v = n+1$.
\STATE \label{ASBF:initial-estimate} Estimate $\sigmahat^h(\BFx,n)$, $\sigmahat^{h,\ell}(\BFx,n)$, and $\sigmahat^{\ell}(\BFx,v)$.

\STATE \label{ASBF:start} Obtain the {CMC}-predicted sample sizes {$N^p(\BFx)$}, where
\begin{equation} \label{eq:as}
    N^p(\BFx)=\min\biggl\{ n^p \in \mbN:\frac{\sigmahat^h\left(\BFx,n\right)}{\sqrt{n^p}}\leq\frac{\kappa\Delta_{k}^{2}}{\sqrt{\lambda_k}}\biggr\}
\end{equation}

\ALOOP{}

\STATE \label{ASBF:solve-problem} Approximately compute $C^*, N^*$, and $V^*$ by solving the problem \eqref{eq:bfmc-problem} and set $c = C^*$.

\IF{$w^h N^* + w^{\ell} V^* \le w^h {N^p(\BFx)}$} \label{ASBF:check-bf-cmc}
\STATE \label{ASBF:bfmc-start} Set $v = \max\{n+1,v\}$ and update $\sigmahat^{h,\ell}(\BFx,n)$ and $\sigmahat^{\ell}(\BFx,v)$ by calling the LF oracle.
\IF{$\Var(\Fbar^{\sfbf}(\BFx,n,v,c)) \le \kappa^2 \Delta_k^4 \lambda_k^{-1} $ {(Condition 1)}} \label{ASBF:bfmc-test}
\RETURN $[n,v,c,\Fbar^{\sfbf}(\BFx,n,v,c)]$ (BFMC) 
\ENDIF

\IF{$n \ge N^*-1$}
\STATE Set $v = v + s^{\ell}$ and get $s^{\ell}$ additional replications of the LF oracle and update $\widetilde{\sigma}^{\ell}(\BFx,v)$.
\ELSE
\STATE Set $n = n+s^h$ and update $\sigmahat^h(\BFx,n)$ and $\sigmahat^{h,\ell}(\BFx,n)$ by calling the LF and HF oracles.
\ENDIF

\ELSE
\IF{$n \ge N^p(\BFx)$ {(Condition 2)}}
{\IF{$\Var(\Fbar^{\sfbf}(\BFx,n,v,c)\le\Var(\Fbar^h(\BFx,n))$ and $n<v$}
    \RETURN $[n,v,c,\Fbar^{\sfbf}(\BFx,n,v,c)]$ (MFMC)
\ELSE
    \RETURN $[n,v,c,\Fbar^h(\BFx,n)]$ (CMC)
\ENDIF}

\ENDIF
\STATE \label{ASBF:cmc-n-update} Set $n = n+s^h$ and update $\sigmahat^h(\BFx,n)$ and $N^p(\BFx)$ by calling the HF oracle.

\ENDIF

\ENDALOOP
 
\end{algorithmic}
\end{algorithm}

\subsection{Adaptive Sampling for Bi-Fidelity Stochastic Optimization}
While BFMC { can } reduce the variance of function { estimates, } blindly employing BFMC may not always be advantageous. 
For example, when the inherent variance of the LF simulation significantly exceeds that of the HF simulation, { a substantial number of LF samples are required to reduce BFMC variance (see Lemma~\ref{lem:variance-bfmc}). In such cases, } if { the LF oracle } is { only marginally cheaper } than the HF { oracle}, { the cost advantage of BFMC diminishes, making CMC potentially more efficient}. Therefore, it is essential to { determine } which Monte Carlo method to employ at a given design point based on the variance of the LF and HF simulation { outputs } and the covariance between them. 
However, { the true variances of the HF and LF simulations and their covariance are unknown. } { Consequently, }  when adaptive sampling is used, the choice of the MC method needs to be dynamically determined based on variance and covariance estimates, which are sequentially updated { from } simulation results.
In summary, { it is necessary to dynamically determine } $N,V$, and $C$ while { collecting } simulation replications, where $N$ and $V$ are the sample sizes for the HF and LF oracles and $C$ represents the coefficient in the BFMC estimate, (denoted as $c$ in~\eqref{eq:bfmc}). To achieve this for any $\BFx \in \real^d$ at iteration $k$, we suggest BFAS, as listed in Algorithm~\ref{alg:BFAS}.

Algorithm~\ref{alg:BFAS} starts by sampling $n$ HF oracle { calls } and $v$ LF oracle { calls } to estimate the variance and covariance terms in~\eqref{eq:variance-bfmc}.
{ Using } the variance estimate $\sigmahat^h(\BFx,n)$, we { compute } a predicted minimum sample size $N^p(\BFx)$ for CMC, adhering to the adaptive sampling rule~\eqref{eq:as-origin}. 
Then the predicted computational cost of CMC at $\BFx$ is $w^h N^p(\BFx)$, where $w^h$ is the cost of { one HF oracle call. }  
{Next, this cost is compared to the projected computational cost of BFMC. } To predict the { minimum cost for BFMC}, we solve~\eqref{eq:bfmc-problem} { using the } variance estimates $\sigmahat^h(\BFx,n)$, $\sigmahat^{\ell}(\BFx,v),$ and $\sigmahat^{h,\ell}(\BFx,n)$ for $\Var(\bar{F}^{\sfbf}(\BFx,\widetilde{n},\widetilde{v},\widetilde{c}))$ (See Lemma~\ref{lem:variance-bfmc}).
\begin{equation} \label{eq:bfmc-problem}
\begin{split}
    [N^*,V^*,C^*]  \in \argmin_{\widetilde{n},\widetilde{v},\widetilde{c} \in \real} \quad w^h \widetilde{n} + w^{\ell}\widetilde{v}  \\
    \text{subject to} \quad
     \Var(\bar{F}^{\sfbf}(\BFx,\widetilde{n},\widetilde{v},\widetilde{c})) &\le \kappa^2 \Delta_k^4 \lambda_k^{-1} \\
    \widetilde{n}-\widetilde{v} &\le 0 \\
     n \le \widetilde{n} &\le \infty \\ 
     v \le \widetilde{v} &< \infty,
\end{split}
\end{equation}
where $w^{\ell}$ is the cost { of a single LF oracle call}.
The first constraint { corresponds to } the adaptive sampling rule from~\eqref{eq:as-origin}, ensuring that { the } BFMC { estimator } achieves the required accuracy. {Problem~\eqref{eq:bfmc-problem} is a three-dimensional deterministic continuous optimization problem that can be efficiently solved in practice. } We { next compare } the predicted computational costs between CMC and BFMC. 

If the { predicted } cost of BFMC is lower than that of CMC, BFMC { is expected to be more } cost-effectiveness, { and } the algorithm proceeds to Step \ref{ASBF:bfmc-start}. We first set $v = \max\{v,n+1\}$ with the updated $n$ in Step \ref{ASBF:cmc-n-update} to ensure that $v > n$. Following this adjustment, additional { LF oracle } replications may { be required, and we } update both $\sigmahat^{l}(\BFx,v)$ and $\sigmahat^{h,\ell}(\BFx,n)$ { accordingly. Next, } if the variance of the BFMC { estimator satisfies the accuracy condition} , i.e., 
\begin{equation} \label{eq:bfmc-test}
    \Var(\Fbar^{\sfbf}(\BFx,n,v,c))  \le \kappa^2 \Delta_k^4 \lambda_k^{-1},
\end{equation}
the algorithm returns $\Fbar^{\sfbf}(\BFx,n,v,c)$. { Otherwise, the algorithm determines } whether to increase $n$ or $v$. 
{Since $n$ has a stronger effect on reducing the variance, the algorithm first checks whether $n < N^* - 1$. If so, $n$ is increased by $s^h$, and the relevant variance and covariance estimates are updated. } If $n \ge N^* - 1$, { additional LF oracle samples are obtained, and $\sigmahat^{\ell}(\BFx, v)$ is updated. The algorithm then returns to Step~\ref{ASBF:solve-problem}.}

{After solving the subproblem, if the projected cost of achieving the required accuracy using CMC is lower than that of BFMC, the algorithm switches to CMC to avoid unnecessary LF sampling. }
If $n \ge N^p$, { the function estimate is sufficiently accurate for optimization, and the algorithm returns the estimator (CMC or BFMC) with the lower estimated variance, ensuring the most accurate result. } If not, { $n$ is increased, the variance estimate is updated, and the process continues. Note that the LF variance estimate and covariance estimate are not updated at this stage. }

Since $n,v,$ and $c$ are dynamically determined based on { simulation realizations, } three outputs { of BFAS } are the stopping times determined by the filtration. Hence, we refer the output of Algorithm~\ref{alg:BFAS} as $[N_k(\BFx),V_k(\BFx),C_k(\BFx),\Ftilde_k(\BFx)]$. 

\begin{remark}[{Computational Costs of BFAS}]
    {The total cost of BFAS is given by $w^h N_k(\BFx)+ w^{\ell} V_k(\BFx)$. If either Condition 1 or Condition 2 is met based on the initial estimates, this cost reduces to $w^h n + w^{\ell} v$, which represents the minimum possible cost. If the final output is the BFMC estimate, the computational cost is strictly lower than that of CMC becuase of Step~\ref{ASBF:check-bf-cmc}. Conversely, when the output is CMC, the $V_k(\BFx)$ LF replications do not contributed to the final estimate. The adaptive sampling scheme mitigates this redundancy by ensuring that only the HF oracle is called when CMC appears more cost-effective but Condition 2 is not yet satisfied (Step~\ref{ASBF:cmc-n-update} in Algorithm~\ref{alg:BFAS}). Furthermore, LF oracle replications are reused to construct the LF-based local model, as detailed in Section \ref{sec:bi-fidelity-TRO}.
    }
\end{remark}




\subsection{Bi-fidelity Stochastic Trust Region Method}
\label{sec:bi-fidelity-TRO}

We now delve into the bi-fidelity stochastic TR method with adaptive sampling (ASTRO-BFDF). { The core idea of ASTRO-BFDF } is to { primarily exploit } the LF oracle until it no longer yields { a solution that improves upon the current incumbent solution}. Hence, unlike the stochastic TR methods discussed in Section \ref{sec:STRO}, { ASTRO-BFDF constructs two local models within separate TRs } tailored { to the } HF and LF functions, { denoted by $\Delta_k^h$ and $\Delta_k^{\ell}$, respectively. Note that } $\Delta_k^h$ is { always larger } than $\Delta_k^{\ell}$ for { all } iterations to maintain large steps for the { HF-based }local model and save computational budget in~\eqref{eq:as} and~\eqref{eq:bfmc-test}.

\begin{algorithm}[htp]  
\small
\caption{\texttt{ASTRO-BFDF}}
\label{alg:TRO-MFDF}
\begin{algorithmic}[1]
\REQUIRE Initial incumbent $\BFx_{0}\in\real^d$, initial and maximum TR radius $\Delta^{\ell}_{0}, \Delta^h_{0}, \Delta_{\max}>0$, model fitness thresholds $0<\eta<1$ and certification threshold $\mu>0$, expansion and shrinkage constants $\gamma_1>1$ and $\gamma_2\in(0,1)$, sample size lower bound sequence $\{\lambda_k\} = \{\mcO(\log k)\}$, adaptive sampling constant $\kappa>0$, correlation constant $\alpha_k > 0$, and lower bound of an initial variance approximation $\sigma_0>0$, {and } sufficient reduction constant $\zeta>0$. 
{\ENSURE Solution sequence \{$\BFX_k$\}}

\FOR{$k=0,1,2,\hdots$}
\STATE {Set $I_k^h=$True.}
\WHILE{$\alpha_k < \alpha_{th}$}
\STATE \label{ASBFTRO:callingLF} Obtain { $\check{\BFX}_k^{\ell}$ and $I_k^h$ } by calling Algorithm \ref{alg:TRO-LFDF}.
\IF{$I_k^h$ is True}
{\STATE Set $\Delta_k^{\ell} = \gamma_2\Delta_k^{\ell}$ and $\alpha_k = \gamma_2 \alpha_k$}  
\ELSE
\STATE Set $(\BFX_{k+1},\Delta^{\ell}_{k+1})= ({\check{\BFX}_k^{\ell}}, \gamma_1\Delta^{\ell}_k)$ and $\alpha_{{k+1}} = \min\{\gamma_1 \alpha_k,1\}$.
\STATE \label{LF:delta-h-update} Set $\Delta_{k+1}^h = \max\{\Delta_{k+1}^{\ell}, \Delta_k^h\}$ and  $k = k+1$.
\BREAK
\ENDIF
\ENDWHILE
\IF{$I_k^h$ is True}
\STATE \label{ASBFTRO:designsetselect} Select $\mcX_{k}=\{ \BFX_{k}^{i}\}_{i=0}^{2d}\subset\mcB(\BFX_{k};\Delta^h_{k})$.

\STATE \label{ASBFTRO:estimate-hf} Estimate the HF function at $\{ \BFX_{k}^{i}\}_{i=0}^{2d}$ by calling Algorithm \ref{alg:BFAS} with $\Delta_k = \Delta_k^h$.

\STATE Estimate the LF function $\Fbar^{\ell}(\BFX_k^i,T_k^i)$ at $\{ \BFX_{k}^{i}\}_{i=0}^{2d}$, satisfying 
\begin{equation} \label{eq:as-lf}
    T_k^i = \min\biggl\{t \in \mbN:\frac{\max\{\sigma_0,\sigmahat^{\ell}\left(\BFX_k^i,t\right)\}}{\sqrt{t}}\leq\frac{\kappa(\Delta^h_{k}
    )^{2}}{\sqrt{\lambda_k}}\biggr\}.
\end{equation}

\STATE Construct local models $M_k^{\ell}(\BFX)$ and $M_k^h(\BFX)$.
\STATE Approximately compute the local model minimizers $$\BFX^{s,h}_k\in\argmin_{\left\| \BFX-\BFX_k \right\| \leq \Delta^h_{k}}M^h_{k}(\BFX) \text{ and } \BFX^{s,\ell}_k\in\argmin_{\left\| \BFX-\BFX_k \right\| \leq \Delta^h_{k}}M^{\ell}_{k}(\BFX).$$
\STATE Estimate $\Ftilde(\BFX^{s,h}_k)$ and $\Ftilde(\BFX^{s,\ell}_k)$ by calling Algorithm \ref{alg:BFAS} with $\Delta_k = \Delta_k^h$.
\STATE Set the candidate point $\BFX^{s}_k \in \argmin_{\BFx\in\{\BFX^{s,h}_k,\BFX^{s,\ell}_k\}} \Ftilde(\BFx).$

\STATE \label{eq:success-ratio-bf} Compute the success ratio $\rhohat_k$ and $\rhohat^{\ell}_k$ as
\begin{equation*}
    \rhohat_k = \frac{\Ftilde_k(\BFX_k^0)-\Ftilde_k(\BFX_k^{s,{h}})}{M_k^h(\BFX_k^0)-M_k^h(\BFX_k^{s,{h}})}
    \text{ and }    
    \rhohat^{\ell}_k = \frac{\Ftilde_k(\BFX_k^0)-\Ftilde_k(\BFX_k^{s,\ell})}{\max\{\zeta(\Delta_k^h)^2,M_k^{{h}}(\BFX_k^0)-M_k^{{h}}(\BFX_k^{s,\ell})\}}.
\end{equation*}
\STATE If $\rhohat^{\ell}_k \ge \eta$, set ${\alpha_{k+1}}=\gamma_1 \alpha_k$; otherwise set ${\alpha_{k+1}}=\gamma_2 \alpha_k$.
\STATE \label{HF:delta-l-update} Set $(\BFX_{k+1},\Delta^h_{k+1})=$ \[
\begin{cases}
    (\BFX_k^s,\min\{\gamma_1\Delta^h_{k}, \Delta_{\max}\})& \text{if } \rhohat_k \ge \eta \text{ and }\mu\|\nabla M^h_{k}(\BFX_{k})\| \ge \Delta^h_{k}, \\
    (\BFX_{k},\gamma_2\Delta^h_{k})              & \text{otherwise},
\end{cases}
\] $\Delta^{\ell}_{{k+1}} = \min\left\{\Delta^{\ell}_k,\Delta^h_k\right\}$, and $k=k+1$.

\ENDIF
\ENDFOR
\end{algorithmic}
\end{algorithm}

\begin{algorithm}[htp]  
\small
\caption{{[$\check{\BFX}_k^{\ell},I_k^h$]} = \texttt{ASTRO-LFDF}$(\BFX_k)$}
\label{alg:TRO-LFDF}
\begin{algorithmic}[1]
\REQUIRE $\BFX_k$, $\Delta^{\ell}_{k}$, model fitness thresholds $0<\eta<1$ and certification threshold $\mu>0$, sufficient reduction constant $\theta>0$, expansion and shrinkage constants $\gamma_1>1$ and $\gamma_2\in(0,1)$, sample size lower bound sequence $\{\lambda_k\}=\{\mcO(\log k)\}$, adaptive sampling constant $\kappa>0$, correlation constant $\alpha_k > 0$, correlation threshold $\alpha_{th}>0$, lower bound of an initial variance approximation $\sigma_0>0$, {and } sufficient reduction constant $\zeta>0$. 
{\ENSURE candidate solution $\check{\BFX}_k^{\ell}$ and indicator $I_k^h$ for HF-based local model construction.}

\STATE \label{ASLFTRO:designsetselect} Select $\mcX^{\ell}_{k}=\{ \BFX_{k}^{i}\}_{i=0}^{p}\subset\mcB(\BFX_{k};\Delta^{\ell}_{k})$.
\STATE Estimate $\Fbar^{\ell}(\BFX_k^i,T_k^i)$ at $\{ \BFX_{k}^{i}\}_{i=0}^{2d}$, satisfying \eqref{eq:as-lf} with $\Delta_k^{\ell}$ instead $\Delta_k^h$.

\STATE Construct local model $M_k^{\ell}(\BFX)$.
\STATE Approximately compute the local model minimizer $$\BFX^{s,\ell}_{k}=\argmin_{\left\| \BFX-\BFX_k \right\| \leq \Delta^{\ell}_{k}}M^{\ell}_{k}(\BFX).$$
\STATE Estimate $\Ftilde_k(\BFX_k^{s,\ell})$ and $\Ftilde_k(\BFX_k^0)$ by calling Algorithm \ref{alg:BFAS} with $\Delta_k = \Delta_k^{\ell}$.
\STATE \label{LF:success-ratio} Compute the success ratio $\rhohat_k$ as
\begin{equation} \label{eq:success-ratio-lf}
    {\rhohat_k^{\ell}} = \frac{\Ftilde_k(\BFX_k^0)-\Ftilde_k(\BFX_k^{s,\ell})}{\max\{\zeta(\Delta_k^h)^2,M_k^{\ell}(\BFX_k^0)-M_k^{\ell}(\BFX_k^{s,\ell})\}}.
\end{equation}
\IF{$\rhohat_k \ge \eta$} \label{LF:sufficient-test} 
{\RETURN $[\check{\BFX}_k^{\ell} = \BFX_k^{s,\ell}, I_k^h = \text{False}]$}
\ELSE
{\RETURN $[\check{\BFX}_k^{\ell} = \BFX_k^{0}, I_k^h = \text{True}]$}
\ENDIF

\end{algorithmic}
\end{algorithm}

With two TRs, { it is essential to } determine when and how to construct the { LF-based } local model, { as uncontrolled use of the LF oracle can lead to inefficiency~\cite{toal2015some}. The main difficulty is assessing the local usefulness of the LF model for optimization without performing costly global comparisons with the HF model. }  Previous studies have attempted to address this by quantifying the correlation between the HF and LF functions through sampling various design points and their corresponding function estimates~\cite{lv2021multi,muller2020bfgo,song2019radial}. However, as { will be shown } in Section \ref{sec:numerical}, low { global } correlation does not { always mean that the LF function is unhelpful for optimization}. { In } the TR method, { it is necessary to ascertain the utility of the LF oracle within the TR rather than across the entire domain. Therefore, we introduce an adaptive correlation constant, $\alpha_k$, to decide whether constructing the LF-based local model $M_k^{\ell}$ is worthwhile. }

The adaptive correlation constant is dynamically updated leveraging results from previous iterations, { similar to the TR radius}. This enables { the algorithm } to { evaluate } whether the LF function contributes to optimization within the current TR. { At } each iteration $k$, { the algorithm checks if } $\alpha_k$ { exceeds } a user-defined threshold $\alpha_{th} \in (0,1)$. 
If { so, it } constructs $M_k^{\ell}$ using { the LF } design set $\mathcal{X}^{\ell}_k$ and corresponding estimates, { then minimizes $\mathcal{X}^{\ell}_k$ within the TR~$\mcB(\mathbf{X}_k;\Delta_k^{\ell})$}. If the candidate { achieves } a sufficient decrease in the HF function and { has a sufficiently large model gradient norm}, it is accepted, the TR expands, and $\alpha_k$ increases. 
{ Otherwise}, the candidate is rejected, leading to the contraction of $\Delta_k^{\ell}$, a decrease in $\alpha_k$, and progression to Step \ref{ASLFTRO:designsetselect} in Algorithm \ref{alg:TRO-LFDF} to identify a superior candidate within the shrunken TR. This process { repeats } until $\alpha_k < \alpha_{th}$, { indicating that the LF oracle is no longer helpful. } { We note that the initial value $\alpha_0$ should be chosen carefully, as poor initialization can slow early progress. However, its impact diminishes as the algorithm progresses, similar to the effect of an improperly chosen initial TR radius.}

\begin{remark}
\label{remark:suff-red-lf}    
    The sufficient reduction test { for the LF model, involving $\hat{\rho}_k^{\ell}$, differs from the test involving $\hat{\rho}_k$ } in Algorithm \ref{alg:TRO-MFDF}. {Specifically}, for a successful iteration, the reduction in function estimates must be larger than $\zeta (\Delta_k^h)^2$ for some $\zeta > 0$ (See Step \ref{LF:success-ratio} in Algorithm~\ref{alg:TRO-LFDF}). 
    {This condition prevents the acceptance of a candidate solution based on an insignificant reduction in the local model value in } \eqref{eq:success-ratio-lf}, { and is also reflected in the convergence analysis of ASTRO-BFDF.}
\end{remark}

When { the LF-based search } fails to identify the next iterate, { the algorithm } constructs the { HF-based } local model $M_k^h$. The design set $\mathcal{X}_k$ { is formed by reusing previously visited points whenever possible, including those from the LF-based models in the current iteration. } HF function { values are then estimated using BFAS, which also provides LF estimates. } Then, we can additionally derive estimates for the LF function $\bar{F}^{\ell}(\mathbf{X}^i_k,T_k^i)$, aligning with the adaptive sampling rule \eqref{eq:as-lf}, { by obtaining $\max\{0,T_k^i-V_k^i\}$ more LF samples, which is typically small in practice and thus incurs minimal computational burden. Both HF-based and LF-based local models are then constructed, their minimizers evaluated, and the better candidate is chosen as the next iterate. Finally, the TR radius $\Delta_k^h$ and the adaptive correlation constant are updated based on the sufficient reduction test.}

In Algorithm \ref{alg:TRO-MFDF}, the {LF-based } local models { are constructed } at various points, each serving distinct purposes. 
Specifically, within Algorithm \ref{alg:TRO-LFDF}, { the model is employed } to seek an improved solution for the HF function. This decision stems from the belief that, {when $\alpha_k > \alpha_{th}$, the LF model will provide sufficiently informative local approximations that can support the HF optimization. }
Thus, { HF oracle usage } is minimized. In the outer loop, 
the primary objective is to update the adaptive correlation constant, even in cases where the LF function has not proven beneficial in preceding iterations. In this case, our aim is to minimize reliance on the LF function, a goal achievable through the adoption of BFMC. When the HF function values are estimated at Step \ref{ASBFTRO:estimate-hf} in ASTRO-BFDF, { a substantial number of LF samples } are already { available from }  BFMC, { enabling the construction of } $M_k^{\ell}$ { at negligible additional cost}. {A drawback is that when the LF-based model fails to contribute meaningfully, Algorithm~\ref{alg:TRO-MFDF} may incur higher computational cost than HF-only solvers. This additional cost is an inherent trade-off for assessing the correlation, which is essential for determining its role in optimization. However, BFAS mitigates this overhead in practice by limiting unnecessary LF oracle evaluations.}

\section{Convergence Analysis}
\label{sec:convergence}
In this section, we { present the convergence analysis of ASTRO-BFDF}. We first introduce two additional assumptions concerning the local model. First, the minimizer of the local model { is assumed to satisfy } a certain degree of function reduction, known as the Cauchy reduction (See Definition \ref{defn:cauchyred}). Second, the Hessian of the local model { is assumed to be } uniformly bounded. Both of these assumptions are essential to validate the quality of the candidate point { generated at each iteration}. 

\begin{assumption}[Reduction in Subproblem]
    \label{assum:fcd}
    For some $\kappa_{fcd}\in(0,1]$, $q \in \{h,\ell\}$, and all $k$, $M^q_k(\BFX^0_k)- M^q_k(\BFX^{s,q}_k)\ge \kappa_{fcd}\left(M^q_k(\BFX^0_k) - M^q_k(\BFX_k^0+\BFS^c_k)\right)$,
    where $\BFS_k^c$ is the Cauchy step.
\end{assumption}

\begin{assumption}[Bounded Hessian in Norm]
In ASTRO-BFDF, the local model Hessians $\sfH_k^q$ are bounded by $\kappa^q_\sfH$ for all $k$ and $q\in\{h,\ell\}$ with $\kappa^q_\sfH \in (0, \infty)$ almost surely.\label{assum:hessian-norm}
\end{assumption}

\subsection{{Main Result}}
The convergence analysis of { ASTRO-DF } has received considerable attention in prior works such as \cite{ha2023,Sara2018ASTRO}. { Although our analysis follows a similar framework, } there are two crucial considerations we must address.

\begin{enumerate}
    \item[(a)] (Stochastic noise) BFMC { must satisfy conditions analogous to those in Theorem~\ref{thm:asfinitedelta2}, ensuring }  that the stochastic error in BFMC { is } less than $\mcO((\Delta^q_k)^2)$ for $q\in \{h,\ell\}$ after sufficiently large $k$. To achieve this, a crucial prerequisite (See Assumption~\ref{assum:martingale}) is ensuring that $F^h(\BFx,{\xi^h}) - c F^{\ell}(\BFx,{\xi^{\ell}})$ exhibits similar properties to $F^h(\BFx,{\xi^h})$ for any ${\xi^h} \in {\Xi^h}$, $c > 0$, and $\BFx \in \real^d$. 
    \item[(b)] (Trust-region) The TR sizes for both HF and LF functions need to converge to zero. {  Similar to other stochastic TR methods~\cite{chen2018storm,ha2023jsim,Sara2018ASTRO}, this condition is essential } because function { estimation } errors { are bounded, even in the worst-case, on the order of $\mathcal{O}(\Delta_k^p)$ for some $p > 0$ under } specific sampling rules and assumptions. Consequently, the estimation errors will also converge to zero, ensuring the accuracy of the estimates. Therefore, within { BFSO}, we also need the same result for $\Delta_k^h$. Furthermore, since $\Delta_k^h \ge \Delta_k^{\ell}$ for all $k \in \mbN$, the convergence of $\Delta_k^h$ implies the convergence of $\Delta_k^{\ell}$ as well.
\end{enumerate}

Taking into account the aforementioned considerations, we are now poised to present the convergence theory of ASTRO-BFDF.

\begin{theorem}[Almost Sure Convergence] \label{thm:almostsureconvergence}
    Let Assumptions \ref{assum:fn}-\ref{assum:hessian-norm} hold. Then, 
    \begin{equation} \label{eq:lim-conv}
        \lim_{k\rightarrow\infty} \|\nabla f^h(\BFX_k) \| \xrightarrow[]{w.p.1} 0.
    \end{equation}
\end{theorem}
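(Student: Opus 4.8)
The plan is to adapt the standard almost-sure convergence framework for adaptive-sampling stochastic trust-region methods \cite{Sara2018ASTRO,ha2023} to the bi-fidelity setting, addressing complications (a)--(b). I would first control the stochastic estimation error. The key observation is that for any fixed $c$ the variable $F^h(\BFx,\xi)-cF^l(\BFx,\xi)$ again satisfies the Bernstein-type moment bound of Assumption \ref{assum:martingale} with modified constants, since a linear combination of subexponential variables is subexponential; this is exactly the prerequisite flagged in (a). Because the acceptance test \eqref{eq:bfmc-test} in Algorithm \ref{alg:BAS} forces $\Var(\Ftilde_k(\BFx))\le\kappa^2\Delta_k^4\lambda_k^{-1}$ and the returned $N_k(\BFx),V_k(\BFx)$ are stopping times with respect to $\{\mcF_{k,j}\}$, I can invoke Theorem \ref{thm:asfinitedelta2} in the stopping-time form noted after \eqref{eq:as-origin} to get $\sum_k\mbP\{|\Ftilde_k(\BFX_k^i)-f^h(\BFX_k^i)|\ge c_f(\Delta_k^q)^2\}<\infty$, and Borel--Cantelli then yields $|\Ftilde_k(\BFX_k^i)-f^h(\BFX_k^i)|\le c_f(\Delta_k^q)^2$ almost surely for all large $k$, uniformly over CMC and BFMC outputs. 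Feeding this into Lemma \ref{lem:stoch-interp} makes the interpolation gradient-error term in \eqref{eq:gradient-error-ub} of order $\Delta^q$, so $M_k^h$ and $M_k^l$ are stochastic fully linear models (Definition \ref{defn:fullylinear}) for all sufficiently large $k$ on a probability-one event $\mcE$.

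On $\mcE$ I would next show $\Delta_k^h\to0$ w.p.1, which is complication (b). Each accepted step, whether produced by the LF inner loop (Algorithm \ref{alg:TRO-LFDF}) or by the HF model (Algorithm \ref{alg:TRO-MFDF}), must pass a sufficient-reduction test. The LF-accepted case is where the bi-fidelity subtlety concentrates: the threshold $\max\{\zeta(\Delta_k^h)^2,M_k^l(\BFX_k^0)-M_k^l(\BFX_k^{s,l})\}$ together with $\|\nabla M_k^l(\BFX_k^0)\|\ge\hat\epsilon$ and the estimation-error bound above guarantees a genuine decrease of order $(\Delta_k^h)^2$ in the \emph{high-fidelity} objective, so that $f^h(\BFX_k)-f^h(\BFX_{k+1})\gtrsim(\Delta_k^h)^2$ on every successful iteration. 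Since $f^h$ is bounded below, telescoping gives $\sum_{k\in\mcS}(\Delta_k^h)^2<\infty$ over the successful set $\mcS$; combined with the shrink-on-failure rule and the fact that the inner loop terminates finitely (it halts once $\alpha_k<\alpha_{th}$, as $\alpha_k$ decays geometrically by $\gamma_2$), this forces $\Delta_k^h\to0$ almost surely, and $\Delta_k^l\le\Delta_k^h$ gives $\Delta_k^l\to0$ as well.

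It then remains to convert vanishing trust regions into vanishing gradients. First I would establish $\liminf_k\|\nabla f^h(\BFX_k)\|=0$ w.p.1 by contradiction: if $\|\nabla f^h(\BFX_k)\|\ge\epsilon>0$ for all large $k$ on a positive-probability event, full linearity forces $\|\nabla M_k^h(\BFX_k)\|\ge\epsilon/2$, whereupon the certification rule $\mu\|\nabla M_k^h(\BFX_k)\|\ge\Delta_k^h$, the Cauchy reduction (Assumption \ref{assum:fcd}), and the bounded Hessian (Assumption \ref{assum:hessian-norm}) produce infinitely many successful steps each decreasing $f^h$ by a fixed multiple of $\epsilon\,\Delta_k^h$, contradicting the summability $\sum_{k\in\mcS}(\Delta_k^h)^2<\infty$ just established. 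To upgrade $\liminf=0$ to $\lim=0$, I would assume $\limsup_k\|\nabla f^h(\BFX_k)\|=\epsilon_0>0$ and use the Lipschitz continuity of $\nabla f^h$ (Assumption \ref{assum:fn}) with $\|\BFX_{k+1}-\BFX_k\|\le\Delta_k^h\to0$ to show that the gradient norm cannot cross repeatedly between $\epsilon_0/3$ and $2\epsilon_0/3$ without forcing an infinite cumulative decrease of $f^h$, again contradicting lower boundedness. Hence \eqref{eq:lim-conv} holds.

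The hardest part will be the interaction of the error-control step with the LF-accepted case of the trust-region argument. One must simultaneously control a BFMC error whose sample sizes $N_k(\BFx),V_k(\BFx)$ and coefficient $C_k(\BFx)$ are themselves random stopping times coupled to the optimization trajectory, and then certify that accepting a step validated only through the surrogate $M_k^l$ still yields a provable $\Theta((\Delta_k^h)^2)$ reduction in the high-fidelity objective rather than merely in the low-fidelity model. The parameters $\zeta$, $\hat\epsilon$, and the threshold $\alpha_{th}$ are engineered precisely to close this gap, and showing they suffice---while keeping the two coupled radii $\Delta_k^h,\Delta_k^l$ and the adaptive correlation constant $\alpha_k$ mutually consistent---is the technical core of the argument.
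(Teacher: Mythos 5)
Your proposal is correct and follows essentially the same route as the paper: establishing the Bernstein condition for $F^h-cF^l$ via Minkowski's inequality, applying the stopping-time form of Theorem \ref{thm:asfinitedelta2} with Borel--Cantelli to bound the BFMC/CMC estimation error by $\mcO((\Delta_k^q)^2)$, deducing $\Delta_k^h\to 0$ w.p.1 from the $\kappa_R(\Delta_k^h)^2$ sufficient decrease on both $\mcH$- and $\mcL$-type successful iterations, and then passing from $\liminf$ to $\lim$ by the standard two-threshold crossing argument. The technical points you flag as hardest---the random stopping times $N_k,V_k,C_k$ in the error control and the role of $\zeta$, $\hat\epsilon$, $\alpha_{th}$ in certifying a genuine HF decrease on LF-accepted steps---are exactly where the paper's Lemmas \ref{lem:mfmc-supexp}--\ref{lem:mfmc-asfinite} and the $\max\{\zeta(\Delta_k^h)^2,\cdot\}$ denominator in \eqref{eq:success-ratio-lf} do their work.
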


Theorem \ref{thm:almostsureconvergence} guarantees that a sequence $\{\BFX_k(\omega)\}$ generated by Algorithm \ref{alg:TRO-MFDF} converges to the first-order stationary point for any {solution } sample path $\omega$. 

\subsection{{Proof of Theorem \ref{thm:almostsureconvergence}}} 
We start by demonstrating that the iid random variables $E_{k,j}^{i,h} - cE_{k,j}^{i,l}$ also fulfill Assumption \ref{assum:martingale} for any $k \in \mbN$, $c \in \real$, and $i \in \{0,1,2,\dots,p,s\}$, indicating their adherence to {a } sub-exponential distribution. 

\begin{lemma} \label{lem:mfmc-supexp}
    Let Assumption \ref{assum:martingale} holds. Then there exist $\sigma^2>0$ and $b>0$ such that for a fixed $n$ and $c \in \real$, 
    \begin{equation} \label{eq:martingale-mfmc}
    \frac{1}{n}\sum_{j=1}^n\mbE[|E_{k,j}^{i,h}-c E_{k,j}^{i,l}|^m\mid\mcF_{k,j-1}]\leq\frac{m!}{2}b^{m-2}\sigma^2,\ \forall m=2,3,\cdots,\forall k.
    \end{equation}
\end{lemma}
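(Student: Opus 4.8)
The plan is to derive \eqref{eq:martingale-mfmc} directly from the two single-fidelity Bernstein bounds furnished by Assumption \ref{assum:martingale}, by splitting the combined conditional moment $\mbE[|E_{k,j}^{i,h} - cE_{k,j}^{i,l}|^m \mid \mcF_{k,j-1}]$ into an $h$-contribution and an $l$-contribution via a pointwise convexity inequality, and then matching constants. The substance of the lemma is essentially this last bookkeeping step, which goes through precisely because $m \ge 2$.

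First I would record the elementary inequality $|a - cb|^m \le 2^{m-1}(|a|^m + |c|^m|b|^m)$, valid for all $m \ge 1$, which follows from $|a - cb| \le |a| + |c|\,|b|$ together with the convexity of $t \mapsto t^m$ on $[0,\infty)$. Applying it pointwise with $a = E_{k,j}^{i,h}$ and $b = E_{k,j}^{i,l}$, taking conditional expectation given $\mcF_{k,j-1}$ (using monotonicity and linearity), and averaging over $j = 1, \dots, n$ yields
\begin{align*}
\frac{1}{n}\sum_{j=1}^n \mbE[|E_{k,j}^{i,h} - cE_{k,j}^{i,l}|^m \mid \mcF_{k,j-1}] &\le 2^{m-1}\Bigg(\frac{1}{n}\sum_{j=1}^n\mbE[|E_{k,j}^{i,h}|^m\mid\mcF_{k,j-1}] \\
&\quad + |c|^m\frac{1}{n}\sum_{j=1}^n\mbE[|E_{k,j}^{i,l}|^m\mid\mcF_{k,j-1}]\Bigg).
\end{align*}
Invoking Assumption \ref{assum:martingale} separately for $q = h$ and $q = l$ then bounds the right-hand side by $2^{m-1}\tfrac{m!}{2}\big((b^h)^{m-2}(\sigma^h)^2 + |c|^m(b^l)^{m-2}(\sigma^l)^2\big)$.

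It remains to recast this into the single Bernstein form $\tfrac{m!}{2}b^{m-2}\sigma^2$, and this is the only real obstacle. The trick I would use is to split $2^{m-1} = 2\cdot 2^{m-2}$ and $|c|^m = |c|^2\,|c|^{m-2}$, so that $2^{m-2}(b^h)^{m-2} = (2b^h)^{m-2}$ and $2^{m-2}|c|^m(b^l)^{m-2} = |c|^2(2|c|b^l)^{m-2}$; this regrouping is legitimate exactly because $m - 2 \ge 0$. Taking $b := 2\max\{b^h, |c|b^l\}$ dominates both bases, so $(2b^h)^{m-2} \le b^{m-2}$ and $(2|c|b^l)^{m-2} \le b^{m-2}$, while $\sigma^2 := 2\big((\sigma^h)^2 + |c|^2(\sigma^l)^2\big)$ absorbs the remaining factors, giving exactly $\tfrac{m!}{2}b^{m-2}\sigma^2$ and establishing \eqref{eq:martingale-mfmc}. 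Finally I would note in passing that linearity gives $\mbE[E_{k,j}^{i,h} - cE_{k,j}^{i,l}\mid\mcF_{k,j-1}] = 0$ and that $E_{k,j}^{i,h} - cE_{k,j}^{i,l}$ inherits $\mcF_{k,j}$-measurability and independence of $\mcF_{k-1}$ from its two components, so the combined error satisfies the full content of Assumption \ref{assum:martingale}, which is precisely what is needed to apply Theorem \ref{thm:asfinitedelta2} to the BFMC estimator.
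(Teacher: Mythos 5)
Your proof is correct, and it follows the same high-level strategy as the paper's -- split the $m$-th conditional moment of $E_{k,j}^{i,h}-cE_{k,j}^{i,l}$ into separate $h$- and $l$-contributions via a triangle-type inequality, apply Assumption \ref{assum:martingale} to each, and then repackage the result into a single Bernstein pair $(b,\sigma^2)$ depending on $c$. The difference is in the two ingredients. For the splitting step the paper uses Minkowski's inequality on the conditional $L^m$ norm, keeping the bound in the form $\bigl(\mbE[|E^{i,h}|^m]^{1/m}+|c|\,\mbE[|E^{i,l}|^m]^{1/m}\bigr)^m$, whereas you use the cruder convexity bound $|a-cb|^m\le 2^{m-1}(|a|^m+|c|^m|b|^m)$; this costs you a factor $2^{m-1}$, which you then correctly absorb into $b^{m-2}$ by writing $2^{m-1}=2\cdot 2^{m-2}$, exploiting $m\ge 2$. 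For the repackaging step the paper normalizes by ratios $\alpha_\sigma,\alpha_b>1$ after a without-loss-of-generality ordering of $(\sigma^h,\sigma^l)$ and $(b^h,b^l)$ and uses $(\alpha_\sigma^2\alpha_b^{m-2})^{1/m}\le\alpha_\sigma\alpha_b$, while you simply dominate both bases by $b:=2\max\{b^h,|c|b^l\}$ and collect the variances into $\sigma^2:=2((\sigma^h)^2+|c|^2(\sigma^l)^2)$. Your route is more elementary and arguably cleaner: it avoids the WLOG step entirely and, by carrying $|c|$ rather than $c$ throughout, handles negative $c$ more carefully than the paper's displayed computation does. The price is slightly looser constants, which is immaterial here since only existence of some $(b,\sigma^2)$ is needed. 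Your closing remark that the combined error also inherits the conditional mean-zero property and the measurability/independence structure is exactly the point the paper relies on when it invokes Theorem \ref{thm:asfinitedelta2} for the BFMC estimator in Lemma \ref{lem:mfmc-asfinite}.
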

\begin{proof}
    {Let us first arbitrary choose $\sigma^h$, $\sigma^{\ell}$, $b^h$, and $b^{\ell}$ such that $\sigma^h > \sigma^{\ell} > 0$, $b^h > b^{\ell}$, and Assumption 2 holds. } 
    We obtain from the Minkowski inequality and Assumption \ref{assum:martingale} that for a any $k,j \in \mbN$, $c \in \real$, and any $m \in \{2,3,\cdots\}$, there exist $b^h, b^{\ell}, (\sigma^h)^2,(\sigma^{\ell})^2>0$ such that 
    \begin{equation} \label{eq:martingale-mfmc-proof}
    \begin{split}
        \mbE[|E_{k,j}^{i,h} -c E_{k,j}^{i,l}|^m\mid\mcF_{k,j-1}]         &\le \left(\mbE[|E_{k,j}^{i,h}|^m\mid\mcF_{k,j-1}]^{\frac{1}{m}} + \mbE[c|E_{k,j}^{i,l}|^m\mid\mcF_{k,j-1}]^{\frac{1}{m}}\right)^m\\
        &\le \left((\frac{m!}{2}(b^h)^{m-2}(\sigma^h)^2)^{\frac{1}{m}} + c(\frac{m!}{2}(b^{\ell})^{m-2}(\sigma^{\ell})^2)^{\frac{1}{m}}\right)^m.
    \end{split}
    \end{equation}
    Then there must exist some constant $\alpha_{\sigma}, \alpha_b >1$ such that $\alpha_{\sigma} (\sigma^{\ell})^2 = (\sigma^h)^2$ and $\alpha_b b^{\ell} = b^h$. Then the right-hand side of \eqref{eq:martingale-mfmc-proof} becomes $((\alpha_\sigma^2 \alpha_b^{m-2})^{1/m}+c)^m(2^{-1}m!(b^{\ell})^{m-2}(\sigma^{\ell})^2).$ Since $(\alpha_\sigma^2 \alpha_b^{m-2})^{1/m} \le \alpha_{\sigma} \alpha_b$ for all $m \in \{2,3,\cdots\}$, we obtain
    \begin{equation*}
        \frac{1}{n}\sum_{j=1}^n\mbE[|E_{k,j}^{i,h}-c E_{k,j}^{i,l}|^m\mid\mcF_{k,j-1}] \le \frac{m!}{2}((\alpha_\sigma \alpha_b+c)b^{\ell})^{m-2}((\alpha_\sigma \alpha_b+c)\sigma^{\ell})^2.
    \end{equation*}
    Hence, the statement of the theorem holds with $\sigma = \sigma^{\ell} (\alpha_\sigma \alpha_b+c)$ and $b = (\alpha_\sigma \alpha_b+c)b^{\ell}.$ {If $\sigma^h \le \sigma^{\ell}$ or $b^h \le b^{\ell}$, we set $\check{\sigma}^h = 2\sigma^{\ell}$ and $\check{b}^h = 2b^{\ell}$. With this choice, \eqref{eq:stochstic-noise-assumption} continues to hold with $\check{\sigma}^h$ and $\check{b}^h$ in place of $\sigma^h$ and $b^h$, and the proof follows from the same steps as above.} 

\end{proof}

Now let us prove that the function estimate error from BFAS is bounded by $\mcO((\Delta^q_k)^2)$ {with probability one after sufficiently large $k$}, aligning with the outcome stated in Theorem \ref{thm:asfinitedelta2}. 


\begin{lemma} \label{lem:mfmc-asfinite}
Let Assumption \ref{assum:martingale} holds and $\BFX_k^i$ for $i \in \{0,1,2,\dots,p,s\}$ be the design points generated by Algorithm~\ref{alg:TRO-MFDF} at iteration $k$. Let $\Ftilde(\BFX^i_k) = f(\BFX^i_k) + \Etilde_k^i$ be the HF function estimate obtained from Algorithm \ref{alg:BFAS} with $\Delta_k = \Delta_k^q$ for $q \in \{h,\ell\}$. Then, given $c_f>0$,   
\begin{equation}
    \mbP\{|\Etilde_k^i|\geq c_f(\Delta^q_k)^2 \text{ i.o.}\}=0.\label{eq:f-est-diff}
\end{equation}
\end{lemma}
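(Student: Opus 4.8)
The plan is to treat the two possible outputs of Algorithm~\ref{alg:BAS} separately: at each design point $\BFX_k^i$ the returned estimate is either the CMC estimate $\Fbar^h(\BFX_k^i,N_k^i)$ or the BFMC estimate $\Fbar^{\sfbf}(\BFX_k^i,N_k^i,V_k^i,C_k^i)$, where I write $N_k^i,V_k^i,C_k^i$ for $N_k(\BFX_k^i),V_k(\BFX_k^i),C_k(\BFX_k^i)$. I would show that the tail probability $\mbP\{|\Etilde_k^i|\ge c_f(\Delta_k^q)^2\}$ is summable in $k$ in each case; Borel--Cantelli then yields \eqref{eq:f-est-diff}. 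The structural fact I would exploit throughout is that Step~\ref{ASBF:set-n} initializes $n=\sigma_0^2\lambda_k\kappa^{-2}(\Delta_k^q)^{-4}$ and thereafter only increases $n$ and $v$ with $v\ge n+1$; hence both returned sample sizes are stopping times adapted to $\{\mcF_{k,j}\}$ satisfying $N_k^i\ge\sigma_0^2\lambda_k\kappa^{-2}(\Delta_k^q)^{-4}$ and $V_k^i>\sigma_0^2\lambda_k\kappa^{-2}(\Delta_k^q)^{-4}$. This is precisely the hypothesis required to invoke Theorem~\ref{thm:asfinitedelta2}.

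In the CMC case $\Etilde_k^i=(N_k^i)^{-1}\sum_{j=1}^{N_k^i}E_{k,j}^{i,h}$, so Theorem~\ref{thm:asfinitedelta2}, applied to the sub-exponential noise $E_{k,j}^{i,h}$ of Assumption~\ref{assum:martingale} with stopping time $N_k^i$, directly gives $\sum_k\mbP\{|\Etilde_k^i|\ge c_f(\Delta_k^q)^2\}<\infty$. In the BFMC case I would first use the algebraic identity
\begin{equation*}
    \Etilde_k^i=\frac{1}{N_k^i}\sum_{j=1}^{N_k^i}\bigl(E_{k,j}^{i,h}-C_k^iE_{k,j}^{i,l}\bigr)+\frac{C_k^i}{V_k^i}\sum_{j=1}^{V_k^i}E_{k,j}^{i,l},
\end{equation*}
which holds because the $f^l$ contributions cancel (so BFMC is unbiased). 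By Lemma~\ref{lem:mfmc-supexp} the variables $E_{k,j}^{i,h}-C_k^iE_{k,j}^{i,l}$ again satisfy Assumption~\ref{assum:martingale}, so Theorem~\ref{thm:asfinitedelta2} bounds the tail of the first average by $\tfrac{c_f}{2}(\Delta_k^q)^2$ summably; the second average involves only $E_{k,j}^{i,l}$, which satisfies Assumption~\ref{assum:martingale}, and $V_k^i$ meets the lower bound, so Theorem~\ref{thm:asfinitedelta2} applies again. A union bound over these two sub-events, and then over the two possible output types, yields summability of $\mbP\{|\Etilde_k^i|\ge c_f(\Delta_k^q)^2\}$, and Borel--Cantelli completes the argument.

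The main obstacle is that the sub-exponential constants $\sigma,b$ delivered by Lemma~\ref{lem:mfmc-supexp} depend on $C_k^i$, and Theorem~\ref{thm:asfinitedelta2} produces $k$-summable tails only when these constants are bounded uniformly in $k$; note that a cruder Chebyshev bound using only the variance certificate \eqref{eq:bfmc-test}, which gives $\mbP\{|\Etilde_k^i|\ge c_f(\Delta_k^q)^2\}\le\kappa^2 c_f^{-2}\lambda_k^{-1}$, is useless here, since $\sum_k\lambda_k^{-1}$ diverges for $\lambda_k=\lambda_0(\log k)^{1+\epsilon_\lambda}$. I therefore need to establish that $C_k^i$ is uniformly bounded almost surely. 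The route I would take is to observe that in \eqref{eq:bfmc-problem} the coefficient $c$ enters only through the variance constraint, so for fixed feasible $(\widetilde n,\widetilde v)$ the variance-minimizing choice is the control-variate coefficient $\sigmahat^{h,l}(\BFX_k^i,\cdot)/(\sigmahat^l(\BFX_k^i,\cdot))^2$, whose magnitude is controlled by $\sigmahat^h/\sigmahat^l$ through the Cauchy--Schwarz bound $|\sigmahat^{h,l}|\le\sigmahat^h\sigmahat^l$; together with Assumption~\ref{assum:martingale} (finite true variances) and the floor $\sigma_0$ imposed on the variance estimates, this furnishes a deterministic bound $|C_k^i|\le\bar c$ independent of $k$. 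With such a $\bar c$ in hand, Lemma~\ref{lem:mfmc-supexp} yields $k$-independent $\sigma,b$, and the two applications of Theorem~\ref{thm:asfinitedelta2} above go through with uniform constants, closing the proof.
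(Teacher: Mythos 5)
Your proposal is correct and follows essentially the same route as the paper's proof: split on whether \texttt{BAS} returns a CMC or BFMC estimate, decompose the BFMC error as $\Ebar_k^{i,h}(N_k^i)-C_k\Ebar_k^{i,l}(N_k^i)+C_k\Ebar_k^{i,l}(V_k^i)$, apply Lemma~\ref{lem:mfmc-supexp} and Theorem~\ref{thm:asfinitedelta2} to each piece using the sample-size floor from Step~\ref{ASBF:set-n}, and finish with Borel--Cantelli. Your additional argument that $|C_k^i|$ is uniformly bounded (so the sub-exponential constants from Lemma~\ref{lem:mfmc-supexp} are $k$-independent) addresses a point the paper handles only implicitly by conditioning on $C_k=c$ and then integrating, and is a worthwhile tightening of the argument.
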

\begin{proof}
    Let $\omega \in \Omega$. Firstly, if the function estimate from BFAS was obtained by CMC, we know from Theorem \ref{thm:asfinitedelta2} and Borel-Cantelli’s first lemma for martingales \cite{1991wil} that the statement of the lemma is satisfied. Now, we assume that the function estimate $\Ftilde(\BFX_k^i{(\omega)})$ is obtained using BFMC, implying that $$|\Etilde_k^i(\omega)| = |\Ebar_k^{i,h}(N_k^{i}(\omega)) - C_k(\omega) \Ebar_k^{i,l}(N_k^{i}(\omega)) + C_k(\omega) \Ebar_k^{i,l}(V_k^i(\omega))|,$$ where $\Ebar_k^{i,q}(N_k^{i}{(\omega)}) = {N(\BFX_k^i{(\omega)})}^{-1}\sum_{j=1}^{N(\BFX_k^i{(\omega)})} E^{i,q}_{k,j}{(\omega)}$ for $q \in \{h,\ell\}$, $N_k^i{(\omega)} = N(\BFX_k^i{(\omega)})$, and $V_k^i{(\omega)} = V(\BFX_k^i{(\omega)})$]. To simplify notation, we will omit $\omega$ from this point forward. Then we have 
    \begin{equation} \label{eq:etilde}    
    \begin{split}
        \mbP\{|\Etilde_k^i| \geq c_f(\Delta^q_k)^2|C_k=c\}\leq \mbP\{|\Ebar_k^{i,h}(N_k^{i}) & - c \Ebar_k^{i,l}(N_k^{i})|\geq \frac{c_f}{2}(\Delta^q_k)^2{|C_k=c}\} \\ 
        &+\mbP\{|c \Ebar_k^{i,l}(V_k^i)|\geq \frac{c_f}{2}(\Delta^q_k)^2{|C_k=c}\}.
    \end{split}
    \end{equation}
    We know from Step \ref{ASBF:set-n} in Algorithm \ref{alg:BFAS} that $N_k^i$ and $V_k^i$ are greater than or equal to $\sigma_0^2 \lambda_k \kappa^{-2}(\Delta_k^q)^{-4}$.
    We also know from Lemma \ref{lem:mfmc-supexp} that Assumption \ref{assum:martingale} holds for $E_{k,0}^{i,h}-c E_{k,0}^{i,l}$, implying that Theorem \ref{thm:asfinitedelta2} also applies to $E_{k,j}^{i,h}- c E_{k,j}^{i,l}$ for any $j \in \mbN$. Hence, the right-hand side of \eqref{eq:etilde} is summable, from which we obtain $\mbP\{|\Etilde_k^i| \geq c_f(\Delta_{k}^q)^2\}$ is also summable based on $\mbP\{|\Etilde_k^i| \geq c_f(\Delta_{k}^q)^2\} = \mbE[\mbP\{|\Etilde_k^i| \geq c_f(\Delta_{k}^q)^2|C_k=c\}]$.
    As a result, the statement of the theorem holds with Borel-Cantelli’s first lemma for martingales \cite{1991wil}.

\end{proof}

Next, we demonstrate that as $k$ goes to infinity, both TR radii converge to zero almost surely. Despite the main framework of our proof differing trivially from the one presented in \cite{ha2023jsim}, we opt to provide a comprehensive proof to facilitate understanding in Appendix \ref{apdx:proofdeltaconverge}. 

\begin{lemma} \label{lem:deltaconverge}
    Let Assumptions \ref{assum:fn}-\ref{assum:hessian-norm} hold. Then, $$\Delta^h_k \xrightarrow[]{w.p.1} 0 \text{ and }\Delta^{\ell}_k \xrightarrow[]{w.p.1} 0 \text{ as } k \rightarrow \infty.$$
\end{lemma}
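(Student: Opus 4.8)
The plan is to prove $\sum_{k}(\Delta_k^h)^2<\infty$ w.p.1, from which $\Delta_k^h\xrightarrow[]{w.p.1}0$ follows because the terms of a convergent nonnegative series must vanish; then $\Delta_k^l\xrightarrow[]{w.p.1}0$ is immediate since $\Delta_k^l\le\Delta_k^h$ for every $k$, as already noted. The whole argument is carried out pathwise on a probability-one event.

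First I would invoke Lemma \ref{lem:mfmc-asfinite} together with the first Borel--Cantelli lemma: for almost every $\omega$ there is a finite $K(\omega)$ such that for all $k\ge K(\omega)$ every estimate returned by Algorithm \ref{alg:BAS} obeys $|\Etilde_k^i|\le c_f(\Delta_k^q)^2$ with the radius $\Delta_k^q$ used at that call. Fixing such an $\omega$ and restricting to $k\ge K(\omega)$, the method behaves like an accurate, essentially deterministic trust-region scheme, and I note in passing that the inner loop of Algorithm \ref{alg:TRO-LFDF} is finite because $\alpha_k$ is multiplied by $\gamma_2\in(0,1)$ on each failed inner step, so eventually $\alpha_k<\alpha_{th}$ and the loop breaks to the HF branch; hence every outer index $k$ is well defined.

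Next I would establish a uniform per-step decrease $f^h(\BFX_k)-f^h(\BFX_{k+1})\ge\kappa_s(\Delta_k^h)^2$ on accepted iterations, treating the two acceptance mechanisms separately. In the HF branch ($I_k^h$ true, $\hat\rho_k\ge\eta$ and $\mu\|\nabla M_k^h(\BFX_k)\|\ge\Delta_k^h$), combining the Cauchy reduction (Assumption \ref{assum:fcd}) and Definition \ref{defn:cauchyred} with the bounded Hessian (Assumption \ref{assum:hessian-norm}) and $\|\nabla M_k^h(\BFX_k)\|\ge\Delta_k^h/\mu$ gives a model decrease of order $(\Delta_k^h)^2$; the ratio test transfers this to the HF estimates, and the $\mathcal{O}((\Delta_k^h)^2)$ accuracy of Step 1 converts it into a genuine reduction of the true $f^h$, once $c_f$ is fixed small relative to $\eta,\kappa_{fcd},\mu,\kappa_{\sfH}^h$. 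In the LF branch ($I_k^h$ false) the accepted point minimizes the LF model, yet acceptance is decided by $\hat\rho^l_k\ge\eta$ whose numerator is the HF estimate decrease and whose denominator is floored at $\zeta(\Delta_k^h)^2$; thus the estimated HF decrease is at least $\eta\zeta(\Delta_k^h)^2$, and Step 1 again yields $f^h(\BFX_k)-f^h(\BFX_{k+1})\ge\kappa_s(\Delta_k^h)^2$. This LF case is the genuinely bi-fidelity part and I expect it to be the main obstacle: one must guarantee that a move driven by the LF surrogate still produces $\Omega((\Delta_k^h)^2)$ reduction in the true HF objective, which is precisely what the floor $\max\{\zeta(\Delta_k^h)^2,\cdot\}$ and the gradient threshold $\hat\epsilon$ in Steps \ref{LF:success-ratio}--\ref{LF:sufficient-test} are designed to ensure (cf.\ Remark \ref{remark:suff-red-lf}).

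Finally I would sum and pass to the full sequence. For $k\ge K(\omega)$ the sequence $\{f^h(\BFX_k)\}$ is nonincreasing (strict decrease on accepted steps, unchanged on rejected ones) and bounded below, so it converges and the telescoping sum of decreases is finite; with the previous step this gives $\sum_{k\in\mcS}(\Delta_k^h)^2<\infty$, where $\mcS$ is the set of accepted iterations. To extend this to all iterations I would use the radius-update structure: after an accepted step the radius is inflated by at most $\gamma_1$, and on each subsequent rejected step it is multiplied by $\gamma_2\in(0,1)$, so within the block between two consecutive accepted indices $k_j<k_{j+1}$ one has $(\Delta_{k_j+m}^h)^2\le\gamma_1^2\gamma_2^{2(m-1)}(\Delta_{k_j}^h)^2$, whence the rejected contribution is bounded by $\tfrac{\gamma_1^2}{1-\gamma_2^2}\sum_j(\Delta_{k_j}^h)^2<\infty$; if instead only finitely many steps are accepted, the tail is all rejected and $\Delta_k^h$ contracts geometrically to $0$. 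In either case $\sum_k(\Delta_k^h)^2<\infty$, so $\Delta_k^h\xrightarrow[]{w.p.1}0$ and therefore $\Delta_k^l\xrightarrow[]{w.p.1}0$.
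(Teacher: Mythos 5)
Your proposal is correct and follows essentially the same route as the paper's proof in Appendix~\ref{apdx:proofdeltaconverge}: the same split into HF-successful iterations and LF iterations ($I_k^h$ False), the same Cauchy-reduction/$\zeta$-floor lower bounds of order $(\Delta_k^h)^2$, the same geometric summation $\frac{\gamma_1^2}{1-\gamma_2^2}$ over unsuccessful blocks, and the same use of Lemma~\ref{lem:mfmc-asfinite} to absorb the estimation errors. The only cosmetic difference is that you absorb the noise terms into the per-step decrease of the true $f^h$, whereas the paper carries $\sum_k|\Etilde_k^0-\Etilde_k^s|$ through the summed inequality and absorbs it at the end.
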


\begin{proof}
    See Appendix \ref{apdx:proofdeltaconverge}.
\end{proof}

Relying on Lemma \ref{lem:deltaconverge}, we show through Lemma \ref{lem:Gk-asconverge} that the gradient of the model for the HF function converges to { the } true gradient almost surely. It is worth highlighting that the local model for the HF function is not constructed at every iteration, as sometimes the local model for the LF function can discover a better solution.


\begin{lemma} \label{lem:Gk-asconverge}
    Let Assumptions \ref{assum:fn}-\ref{assum:hessian-norm} hold. Let $\{k_j\}$ be the subsequence such that $I_{k_j}^h  =$ True. Then, $$\|\nabla M^h_{k_j}(\BFX_{k_j}^0) - \nabla f^{{h}}(\BFX^0_{k_j}) \| \xrightarrow[]{w.p.1} 0 \text{ as } j \rightarrow \infty.$$
\end{lemma}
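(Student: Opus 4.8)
The plan is to prove Lemma~\ref{lem:Gk-asconverge} by combining the gradient-error bound for stochastic interpolation models (Lemma~\ref{lem:stoch-interp}) with the almost-sure summability of the stochastic errors (Lemma~\ref{lem:mfmc-asfinite}) and the vanishing of the trust-region radius (Lemma~\ref{lem:deltaconverge}). The starting point is the bound from Lemma~\ref{lem:stoch-interp} applied to the HF model along the subsequence $\{k_j\}$ on which $I_{k_j}^h=$ True and $M_{k_j}^h$ is actually constructed: for $\BFz=\BFX_{k_j}^0$,
\begin{equation*}
    \|\nabla M^h_{k_j}(\BFX_{k_j}^0) - \nabla f^h(\BFX^0_{k_j})\| \le \kappa_{eg1}\Delta^h_{k_j} + \kappa_{eg2}\frac{\sqrt{\sum_{i=1}^{p}(\Ebar^{i,h}_{k_j}(N^i_{k_j})-\Ebar^{0,h}_{k_j}(N^0_{k_j}))}}{\Delta^h_{k_j}}.
\end{equation*}
The first term tends to zero by Lemma~\ref{lem:deltaconverge}, since $\Delta^h_{k_j}\to 0$ w.p.1. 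The whole task therefore reduces to showing that the second (stochastic) term converges to zero almost surely.

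First I would note that the HF estimates used to build $M^h_{k_j}$ are produced by Algorithm~\ref{alg:BAS} (BAS), so the relevant noise terms $\Etilde^i_{k_j}=\Ebar^{i,h}_{k_j}(N^i_{k_j})-C_{k_j}\Ebar^{i,l}_{k_j}(N^i_{k_j})+C_{k_j}\Ebar^{i,l}_{k_j}(V^i_{k_j})$ are exactly the quantities controlled by Lemma~\ref{lem:mfmc-asfinite} (I would rewrite the $\Ebar^{i,h}$-notation of Lemma~\ref{lem:stoch-interp} as these BAS errors $\Etilde^i$, which is the correct interpretation when estimates come from BFMC rather than plain CMC). Fix an arbitrary $c_f>0$. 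Lemma~\ref{lem:mfmc-asfinite} gives $\mbP\{|\Etilde^i_k|\ge c_f(\Delta^h_k)^2 \text{ i.o.}\}=0$ for each $i\in\{0,1,\dots,p\}$, and a finite union over the $p+1$ design points preserves the probability-zero conclusion. Hence, w.p.1, there is a (sample-path-dependent) index $K(\omega)$ beyond which $|\Etilde^i_{k}|< c_f(\Delta^h_{k})^2$ simultaneously for all $i$. Restricting to the subsequence $\{k_j\}$ and using $|\Etilde^i_{k_j}-\Etilde^0_{k_j}|\le |\Etilde^i_{k_j}|+|\Etilde^0_{k_j}|<2c_f(\Delta^h_{k_j})^2$, the numerator satisfies
\begin{equation*}
    \sqrt{\sum_{i=1}^{p}\big|\Etilde^{i}_{k_j}-\Etilde^{0}_{k_j}\big|} \le \sqrt{2p\,c_f}\,\Delta^h_{k_j}
\end{equation*}
for all $k_j\ge K(\omega)$, so the second term of the gradient-error bound is at most $\kappa_{eg2}\sqrt{2p\,c_f}\,\Delta^h_{k_j}/\Delta^h_{k_j}=\kappa_{eg2}\sqrt{2p\,c_f}$, a constant multiple of $\sqrt{c_f}$.

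To finish, I would let $c_f\to 0$: since the bound $\kappa_{eg2}\sqrt{2p\,c_f}$ can be made arbitrarily small and holds eventually along $\{k_j\}$ on a probability-one event, and since $\kappa_{eg1}\Delta^h_{k_j}\to 0$, we conclude $\limsup_{j\to\infty}\|\nabla M^h_{k_j}(\BFX_{k_j}^0)-\nabla f^h(\BFX^0_{k_j})\|=0$ w.p.1, which is the claim. A clean way to organize this is to intersect the probability-one events for a countable sequence $c_f=1/\ell$, $\ell\in\mbN$, so that a single full-measure event supports the whole argument. The main obstacle I anticipate is bookkeeping rather than conceptual: the sample sizes $N^i_{k_j}$ and $V^i_{k_j}$ are random stopping times and the coefficient $C_{k_j}$ is random, so I must be careful that Lemma~\ref{lem:mfmc-asfinite} is invoked with its conditioning over $C_k=c$ already integrated out (as that lemma's proof does), and that the lower bounds $N^i_{k_j},V^i_{k_j}\ge \sigma_0^2\lambda_{k_j}\kappa^{-2}(\Delta^h_{k_j})^{-4}$ guaranteed by Algorithm~\ref{alg:BAS} are what license applying Theorem~\ref{thm:asfinitedelta2} uniformly across the design points. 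I would also make explicit that the bound from Lemma~\ref{lem:stoch-interp} applies at $\BFz=\BFX_{k_j}^0$ with the design set poised in $\mcB(\BFX_{k_j};\Delta^h_{k_j})$, which is ensured by the model-construction step of Algorithm~\ref{alg:TRO-MFDF}.
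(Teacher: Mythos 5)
Your proof is correct and follows essentially the same route as the paper's: apply Lemma~\ref{lem:stoch-interp} at $\BFX_{k_j}^0$, control the BAS noise terms $\Etilde^i_{k_j}$ via Lemma~\ref{lem:mfmc-asfinite}, and let $\Delta^h_{k_j}\to 0$ by Lemma~\ref{lem:deltaconverge}. The only difference is cosmetic: the paper's proof reads the interpolation bound with the differences \emph{squared} inside the sum, so the stochastic term is bounded by $2\kappa_{eg2}c_f\Delta^h_{k_j}$ and vanishes with the radius for a single fixed $c_f$, whereas you work with the non-squared form as literally stated in Lemma~\ref{lem:stoch-interp}, obtain the constant bound $\kappa_{eg2}\sqrt{2p\,c_f}$, and therefore need the extra (valid) countable-intersection step sending $c_f\to 0$.
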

\begin{proof}
    We know from Lemma \ref{lem:mfmc-asfinite} that given $c_f>0$, there exists sufficiently large $J$ such that $|\Etilde_{k_j}^i| < c_f(\Delta^h_{k_j})^2$ for any $i \in \{0,1,\cdots,p,s\}$ and $j>J$. Then from Lemma \ref{lem:stoch-interp}, we have,
    \begin{equation} \label{eq:gradient-gap}
    \begin{split}
        \| \nabla M^h_{k_j}(\BFX_{k_j}^0) - \nabla f^{{h}}(\BFX_{k_j}^0)\| &\le \kappa_{eg1}\Delta_{k_j}^h + \kappa_{eg2}\frac{\sqrt{\sum_{i=1}^p(\Etilde_{k_j}^i-\Etilde_{k_j}^0)^2}}{\Delta_{k_j}^h}\\
        &\le \kappa_{eg1}\Delta_{k_j}^h + \kappa_{eg2}\frac{{\sum_{i=1}^p}|\Etilde_{k_j}^i - \Etilde_{k_j}^0|}{\Delta_{k_j}^h} \\
        &\le (\kappa_{eg1}+2{p}\kappa_{eg2}c_f) \Delta_{k_j}^h.        
    \end{split}
    \end{equation}
    Given that Lemma \ref{lem:deltaconverge} ensures $\Delta_{k_j}^h$ converges to 0 w.p.1, the statement of the theorem holds.

\end{proof}

In the following lemma, we demonstrate that after a sufficient number of iterations, if the TR for the HF function is relatively smaller than the model gradient, the iteration is successful with probability one. 

\begin{lemma} \label{lem:successful-iter}
Let Assumptions \ref{assum:fn}-\ref{assum:hessian-norm} hold. Then there exists $c_d>0$ such that 
\begin{equation*}
    \mbP\left\{ \left( \Delta^h_{k}\le c_d \|\nabla M^h_{k}(\BFX_{k}^0)\| \right) \bigcap \left(\rhohat_k<\eta\right)  \bigcap \left(I^h_k \text{ is True} \right) \text{ i.o.} \right\}=0.
\end{equation*}
\end{lemma}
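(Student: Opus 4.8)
The plan is to prove the statement one iteration at a time: I will exhibit a deterministic constant $c_d>0$, depending only on the assumption constants, such that on a probability-one set the inequality $\Delta_k^h \le c_d\|\nabla M_k^h(\BFX_k^0)\|$ together with $I_k^h$ True forces $\rhohat_k \ge \eta$ for all sufficiently large $k$; the three-way intersection with $\{\rhohat_k<\eta\}$ then occurs only finitely often almost surely, which is the claim. First I would fix a sample path $\omega$ outside the null set of Lemma \ref{lem:mfmc-asfinite}, so that $|\Etilde_k^i| < c_f(\Delta_k^h)^2$ holds for every $i\in\{0,\dots,p,s\}$ and all large $k$; combined with the $\Lambda$-poisedness of $\mcX_k$ and the interpolation bound underlying Lemma \ref{lem:stoch-interp}, this makes $M_k^h$ fully linear on $\mcB(\BFX_k;\Delta_k^h)$ for all large $k$, in particular $|f^h(\BFX_k^i)-M_k^h(\BFX_k^i)| \le \kappa_{ef}(\Delta_k^h)^2$.

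Next I would lower-bound the reduction attainable by the HF model minimizer $\BFX_k^{s,h}$. Using Assumption \ref{assum:fcd} (Cauchy reduction) and Assumption \ref{assum:hessian-norm} ($\|\sfH_k^h\| \le \kappa_\sfH^h$), the choice $c_d \le 1/\kappa_\sfH^h$ makes the hypothesis $\Delta_k^h \le c_d\|\nabla M_k^h(\BFX_k^0)\|$ force the minimum in the Cauchy step to equal $\Delta_k^h$, so that
$$M_k^h(\BFX_k^0) - M_k^h(\BFX_k^{s,h}) \ge \frac{\kappa_{fcd}}{2}\|\nabla M_k^h(\BFX_k^0)\|\,\Delta_k^h \ge \frac{\kappa_{fcd}}{2c_d}(\Delta_k^h)^2.$$
The subtlety is that the accepted candidate $\BFX_k^s$ is the $\Ftilde$-minimizer of $\{\BFX_k^{s,h},\BFX_k^{s,l}\}$ and may be the LF point $\BFX_k^{s,l}$, so the denominator $M_k^h(\BFX_k^0)-M_k^h(\BFX_k^s)$ is not directly controlled by the Cauchy bound. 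I would get around this by noting $\Ftilde_k(\BFX_k^s) \le \Ftilde_k(\BFX_k^{s,h})$, whence $\mathrm{num} := \Ftilde_k(\BFX_k^0)-\Ftilde_k(\BFX_k^s) \ge \Ftilde_k(\BFX_k^0)-\Ftilde_k(\BFX_k^{s,h})$, and transferring the model reduction above through the fully linear and estimation-error bounds gives $\mathrm{num} \ge \big(\tfrac{\kappa_{fcd}}{2c_d} - 2(\kappa_{ef}+c_f)\big)(\Delta_k^h)^2$.

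Finally I would compare numerator and denominator directly. Writing $\Ftilde_k(\BFX_k^i)=f^h(\BFX_k^i)+\Etilde_k^i$ and applying the fully linear bound at $\BFX_k^0,\BFX_k^s$ together with $|\Etilde_k^0|,|\Etilde_k^s|<c_f(\Delta_k^h)^2$ yields $|\mathrm{num}-\mathrm{den}| \le 2(\kappa_{ef}+c_f)(\Delta_k^h)^2$, where $\mathrm{den}:=M_k^h(\BFX_k^0)-M_k^h(\BFX_k^s)$. Then $\mathrm{den} \ge \mathrm{num}-2(\kappa_{ef}+c_f)(\Delta_k^h)^2 > 0$ for $c_d$ small, and
$$\mathrm{num}-\eta\,\mathrm{den} = (1-\eta)\,\mathrm{den} + (\mathrm{num}-\mathrm{den}) \ge (1-\eta)\,\mathrm{den} - 2(\kappa_{ef}+c_f)(\Delta_k^h)^2,$$
which is nonnegative once $c_d$ is small enough that $(1-\eta)\big(\tfrac{\kappa_{fcd}}{2c_d}-4(\kappa_{ef}+c_f)\big) \ge 2(\kappa_{ef}+c_f)$. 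Choosing $c_d = \min\{1/\kappa_\sfH^h,\ (1-\eta)\kappa_{fcd}/(12(\kappa_{ef}+c_f))\}$ thus guarantees $\rhohat_k = \mathrm{num}/\mathrm{den} \ge \eta$ for all large $k$ on this path, contradicting $\rhohat_k<\eta$; since the sample-path set has probability one, the i.o. event has probability zero.

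I expect the main obstacle to be the denominator issue just described: because $\BFX_k^s$ can be the LF-model minimizer, the standard ``small trust region implies success'' argument cannot be applied verbatim, and the argument must route the guaranteed reduction of $\BFX_k^{s,h}$ through the $\Ftilde$-optimality of $\BFX_k^s$. A secondary technical point is that Lemma \ref{lem:stoch-interp} states only the gradient-error bound, so I would need the companion function-value estimate $|f^h-M_k^h| \le \kappa_{ef}(\Delta_k^h)^2$ with a $k$-uniform constant $\kappa_{ef}$, which is standard for $\Lambda$-poised quadratic interpolation once the estimation errors are $\mcO((\Delta_k^h)^2)$ as supplied by Lemma \ref{lem:mfmc-asfinite}.
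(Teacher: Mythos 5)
Your argument is correct, and it is essentially the standard ``small trust region relative to the model gradient implies a successful iteration'' argument. The paper, however, does not write this argument out: its proof of this lemma consists of the single observation that the event is restricted to iterations with $I_k^h$ True (so the HF model and the Algorithm~2 success ratio are in play), followed by a citation to Lemma~4.4 of the ASTRO-DF reference \cite{ha2023}. Your proposal therefore supplies content the paper outsources, and — more importantly — it explicitly resolves the one point where the single-fidelity argument does \emph{not} carry over verbatim: the accepted candidate $\BFX_k^s$ may be the LF-model minimizer $\BFX_k^{s,l}$, so the denominator $M_k^h(\BFX_k^0)-M_k^h(\BFX_k^s)$ is not the Cauchy reduction. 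Your workaround — bound the numerator from below via $\Ftilde_k(\BFX_k^s)\le\Ftilde_k(\BFX_k^{s,h})$ together with the Cauchy/fully-linear/estimation bounds at $\BFX_k^{s,h}$, then control $|\mathrm{num}-\mathrm{den}|\le 2(\kappa_{ef}+c_f)(\Delta_k^h)^2$ to force the denominator positive and the ratio above $\eta$ — is sound, and the constant bookkeeping (the choice $c_d=\min\{1/\kappa_\sfH^h,\ (1-\eta)\kappa_{fcd}/(12(\kappa_{ef}+c_f))\}$) checks out. Two minor caveats you already flag and which are indeed needed: Lemma~\ref{lem:stoch-interp} as stated gives only the gradient-error bound, so the function-value bound $|f^h-M_k^h|\le\kappa_{ef}(\Delta_k^h)^2$ must be imported from the standard $\Lambda$-poised interpolation theory (consistent with Definition~\ref{defn:fullylinear}); and Lemma~\ref{lem:mfmc-asfinite} must be read as covering the estimation errors at \emph{both} candidate points $\BFX_k^{s,h}$ and $\BFX_k^{s,l}$, not only at the one eventually labelled $s$. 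Neither is a gap in substance.
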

\begin{proof}
    {See Appendix \ref{apdx:proof-suc-iter}.}

\end{proof}

Given Lemma~\ref{lem:Gk-asconverge} { and~\ref{lem:successful-iter}, next result } suggests that 
if the TR radius for the HF function is comparatively smaller than the true gradient { norm}, the candidate solution is accepted and the TR is expanded. This ensures that the TR for the HF function will not converge to zero before the true gradient does. 

{
\begin{lemma}\label{thm:delta_epsilon}
    Let Assumptions \ref{assum:fn}-\ref{assum:hessian-norm} hold. Then there exists a constant $c_{lb}>0$ such that with probability 1 
    \begin{equation*}
        \Delta_k^h < c_{lb}\|\nabla f^h(\BFX_k^0)\| \text{ for sufficiently large $k$ } \Rightarrow \text{Iteration $k$ is successful}.
    \end{equation*}
\end{lemma}
\begin{proof}
Let the solver sample path $\omega$ be fixed. When $I^h_k$ is False, the iteration $k$ is already successful. Hence, let us assume $I^h_k = \text{True}$, implying that the local model $M_k^h$ must exist. We know from \eqref{eq:gradient-gap} that for sufficiently large $k$, $\|\nabla M_k^h(\BFX_k^0) - \nabla f^h(\BFX_k^0)\| \le c_{gd} \Delta_k^h,$ where $c_{gd} = \kappa_{eg1}+ 2p\kappa_{eg2} c_f$.  Define $c_{lb} = \frac{\min\{c_d,\mu\}}{1+\min\{c_d,\mu\}c_{gd}},$ where $c_d$ is as defined in Lemma \ref{lem:successful-iter}.
Then we have
\begin{equation}
\begin{split}    
    \Delta^h_k \le c_{lb}\|\nabla f(\BFX_k^0)\| & \le c_{lb}(\|\nabla M_k^h (\BFX_k^0 )\| + \|\nabla f^h(\BFX_k^0) - \nabla M_k^h(\BFX_k^0)\|),
\end{split}
\end{equation}
from which we obtain
\begin{equation}
    \Delta_k^h \le \frac{c_{lb}}{1-c_{lb}c_{gd}} \|\nabla M_k^h (\BFX_k^0 )\| = \min\{c_d,\mu\} \|\nabla M_k^h (\BFX_k^0 )\|.
\end{equation}
We also know from Lemma \ref{lem:successful-iter} that when $\Delta_k^h \le c_d \|\nabla M_k^h (\BFX_k^0 )\|$, $\rhohat_k \ge \eta$ for sufficiently large $k$. As a result, when $\Delta_k^h \le c_{lb} \|\nabla f(\BFX_k^0)\|$, we have $\Delta_k^h \le \mu \|\nabla M_k^h (\BFX_k^0 )\| $ and $\rhohat_k \ge \eta$ for sufficiently large $k$, implying that the iteration $k$ is successful.

\end{proof}
}

\begin{lemma} \label{lem:lim-inf-convergence}
Let Assumptions \ref{assum:fn}-\ref{assum:hessian-norm} hold. Then 
\begin{equation} \label{eq:liminf-conv}
  \liminf \|\nabla f^h(\BFX_k) \| \xrightarrow[]{w.p.1} 0 \text{ as } k \rightarrow \infty.  
\end{equation}
\end{lemma}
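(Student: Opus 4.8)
The plan is to argue by contradiction and reduce everything to a conflict with Lemma~\ref{lem:deltaconverge}, which guarantees $\Delta_k^h\xrightarrow[]{w.p.1}0$. Suppose \eqref{eq:liminf-conv} fails, i.e. $\mbP\{\liminf_k\|\nabla f^h(\BFX_k)\|>0\}>0$. Since $\{\liminf_k\|\nabla f^h(\BFX_k)\|>0\}=\bigcup_{m\in\mbN}\{\liminf_k\|\nabla f^h(\BFX_k)\|>1/m\}$, some term has positive probability; I would fix such an $m$, set $2\epsilon=1/m$, and let $\mcE$ be the event that $\|\nabla f^h(\BFX_k)\|\ge 2\epsilon$ for all $k$ past some (random) index. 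Intersecting $\mcE$ with the probability-one events supplied by Lemmas~\ref{lem:deltaconverge}, \ref{lem:Gk-asconverge}, and \ref{lem:successful-iter} yields an event $\mcE'$ of positive probability, and it suffices to reach a contradiction on $\mcE'$.

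First I would record how $\Delta_k^h$ is allowed to change. Inspecting the expansion/contraction rule in Step~\ref{HF:delta-l-update} of Algorithm~\ref{alg:TRO-MFDF}, the else-branch of Algorithm~\ref{alg:TRO-MFDF}, and the update $\Delta_k^h=\max\{\Delta_k^l,\Delta_k^h\}$ at Step~\ref{LF:delta-h-update} of Algorithm~\ref{alg:TRO-LFDF}, the radius $\Delta_k^h$ is left nondecreasing on every iteration with $I_k^h$ False and can only contract on an iteration with $I_k^h$ True that takes the ``otherwise'' branch (i.e. that fails $\rhohat_k\ge\eta$ together with $\mu\|\nabla M_k^h(\BFX_k)\|\ge\Delta_k^h$). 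Consequently, on $\mcE'$ there must be infinitely many iterations with $I_k^h$ True: otherwise $\Delta_k^h$ would be eventually nondecreasing and bounded below by a positive constant, contradicting $\Delta_k^h\to 0$. Let $\{k_j\}$ denote this infinite subsequence.

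Next I would activate the model-accuracy and successful-step lemmas on $\{k_j\}$. By Lemma~\ref{lem:Gk-asconverge}, $\|\nabla M_{k_j}^h(\BFX_{k_j}^0)-\nabla f^h(\BFX_{k_j}^0)\|\to 0$, so on $\mcE'$ we get $\|\nabla M_{k_j}^h(\BFX_{k_j}^0)\|\ge\epsilon$ for all large $j$. Since $\Delta_{k_j}^h\to 0$, for all large $j$ both $\Delta_{k_j}^h\le c_d\|\nabla M_{k_j}^h(\BFX_{k_j}^0)\|$ (with $c_d$ from Lemma~\ref{lem:successful-iter}) and $\Delta_{k_j}^h\le\mu\|\nabla M_{k_j}^h(\BFX_{k_j}^0)\|$ hold. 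Lemma~\ref{lem:successful-iter} then forces $\rhohat_{k_j}\ge\eta$ for all large $j$; combined with $\mu\|\nabla M_{k_j}^h(\BFX_{k_j}^0)\|\ge\Delta_{k_j}^h$, both conditions of the expansion branch in Step~\ref{HF:delta-l-update} are met, so $\Delta_{k_j+1}^h\ge\Delta_{k_j}^h$. Thus every sufficiently late $I_k^h$-True iteration expands rather than contracts $\Delta_k^h$.

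Combining the two observations, past some index no iteration of either type contracts $\Delta_k^h$, so $\{\Delta_k^h\}$ is eventually nondecreasing and hence bounded away from zero on $\mcE'$, contradicting $\Delta_k^h\xrightarrow[]{w.p.1}0$. This forces $\mbP(\mcE')=0$, hence $\mbP(\mcE)=0$, and letting $m\to\infty$ establishes \eqref{eq:liminf-conv}. I expect the main obstacle to be the bookkeeping across the two nested trust regions — specifically, rigorously justifying that $\Delta_k^h$ never contracts on $I_k^h$-False iterations (tracking the $\max\{\Delta_k^l,\Delta_k^h\}$ update in Algorithm~\ref{alg:TRO-LFDF}) and that the $I_k^h$-True subsequence is infinite — since once these structural facts are in hand the remainder follows the familiar liminf template for stochastic trust-region methods.
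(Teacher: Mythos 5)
Your proposal is correct and follows essentially the same route as the paper, which simply notes that $I_k^h$-False iterations are successful (hence never contract $\Delta_k^h$) and then invokes the standard $\liminf$ contradiction argument built on Lemmas~\ref{lem:deltaconverge}, \ref{lem:Gk-asconverge}, and~\ref{lem:successful-iter}. You have merely written out in full the template the paper defers to its cited reference, including the correct bookkeeping for the two trust regions.
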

\begin{proof}
    We begin by noting that when $I_k^h$ is False, iteration $k$ must be successful. Using this fact, along with Lemma \ref{lem:Gk-asconverge} and \ref{lem:successful-iter}, the proof can be completed by straightforwardly following the steps outlined in Theorem 4.6 of \cite{ha2023jsim}.

\end{proof}

We have reached a point where we can establish the almost sure convergence of ASTRO-BFDF. {The proof follows a similar approach to that of the classical TR method. For further details, see \cite{conn2000tr,katya:DFObook,Sara2018ASTRO}. We now prove Theorem 2.} 

\begin{proof}\textit{of Theorem \ref{thm:almostsureconvergence}.}
    We first need to assume that there is a subsequence that has gradients bounded away from zero for contradiction. Particularly, suppose that there exists a set, $\hat{\mcD}$, of positive measure, $\omega \in \hat{\mcD}$, $\epsilon_0>0$, and a subsequence of successful iterates, $\{t_{j}(\omega)\}$, such that 
    $\|\nabla f^h(\BFX_{t_j(\omega)}(\omega))\| > 2\epsilon_{0}$, for all $j \in \mathbb{N}$. 
    We denote $t_{j} = t_{j}(\omega)$ and suppress $\omega$ in the following statements for ease of notation.
    Due to the $\lim$-$\inf$ type of convergence just proved in Lemma \ref{lem:lim-inf-convergence}, for each $t_{j}$, there exists a first successful iteration, $\ell_{j} = \ell(t_{j}) > t_{j}$, such that, for large enough $k$, 
    \begin{equation}\label{eq:Subsequenceong1}
    \| \nabla f^h(\BFX_{k}) \| > 2\epsilon_{0}, \  \ t_{j} \leq k < \ell_{j}, 
    \end{equation}
    and
    \begin{equation}\label{eq:Subsequenceong2}
    \| \nabla f^h(\BFX_{\ell_{j}}) \| < 1.5\epsilon_{0}.
    \end{equation}
    Define $\mcA^h_{j} = \big \{ k \in \mcH: t_{j} \leq k < \ell_{j} \big \}$ and $\mcA^{\ell}_{j} = \big \{ k \in \mcL: t_{j} \leq k < \ell_{j} \big \}$, where
    \begin{equation*}
    \begin{split}
        \mcH &= \{k\in \mbN:(\rhohat_k > \eta) \cap (
        \mu\|\nabla M^h_k(\BFX_k^0) \| \ge \Delta_k^h) \cap (I_k^h \textit{ is True})\}, \\
        \mcL &= \{k\in \mbN:I_k^h \textit{ is False}\}.
    \end{split}    
    \end{equation*}
    Let $j$ be sufficiently large and $k \in \mcA^h_{j}$. We then obtain from Lemma \ref{lem:Gk-asconverge} 
    \begin{equation}\label{eq:LowerBoundGn1}
    \| \nabla M^h_k(\BFX_k) \| > \epsilon_{0}.
    \end{equation}
    Since $k$ is a successful iteration, $\hat{\rho}_{k} \geq \eta$. Furthermore, Assumption \ref{assum:fcd} and \eqref{eq:LowerBoundGn1} imply that
    \begin{equation}\label{eq:eq8}
    \begin{split}
    f^h(\BFX_{k})-f^h(\BFX_{k+1}) + \Etilde_k^0 - \Etilde_k^s 
    &\geq \frac{1}{2}\eta {\kappa_{fcd} \| \nabla M_k^h(\BFX_k) \| \min \Bigg \{\frac{\| \nabla M_k^h(\BFX_k)\|}{\|\sfH^h_{k} \|},\Delta^h_{k} \Bigg \}}\\ 
    &> c_{fd} \Delta_k^h, 
    \end{split}
    \end{equation}
    where $c_{fd} = {\min\{\frac{1}{2}\eta \kappa_{fcd}, 2\zeta \gamma_2 c_{lb} \} \epsilon_0}.$
    When $k \in \mcA_j^{\ell}$, we also obtain
    \begin{equation} \label{eq:eq9}
    \begin{split}    
        f^h(\BFX_{k})-f^h(\BFX_{k+1}) + \Etilde_k^0 - \Etilde_k^s > {\zeta(\Delta_k^h)^2} & {> \zeta \gamma_2 c_{lb} \|\nabla f(\BFX_k)\| \Delta_k^h}\\
        & {> 2\zeta \gamma_2 c_{lb} \epsilon_0 \Delta_k^h \ge c_{fd}\Delta_k^h,} 
    \end{split}
    \end{equation}
    {where the second inequality is obtained from Lemma \ref{thm:delta_epsilon}, implying that $\Delta_k^h > \gamma_2 c_{lb} \|\nabla f(\BFX_k)\|$ for sufficiently large $k$ and the third inequality follows from $\|f(\BFX_k)\| > 2\epsilon_0$. }
    Since we know from Lemma \ref{lem:mfmc-asfinite} that 
    \begin{equation} \label{eq:eq10}
    \begin{split}
        |\Etilde_k^0 - \Etilde_{k}^s| < 0.5 c_{fd}\Delta_k^h \text{ for } k \in \mcA_j^h \text{ and }
        |\Etilde_k^0 - \Etilde_{k}^s| < 0.5 c_{fd}\Delta_k^{\ell} \text{ for } k \in \mcA_j^{\ell},
    \end{split}
    \end{equation}
    the sequence $\{f^h(\BFX_k)\}_{k\in\mcA_j}$ is monotone decreasing for sufficiently large $j$. From \eqref{eq:eq9}, \eqref{eq:eq10}, and the fact that { the step size cannot exceed the TR radius}, we deduce that
    \begin{equation}\label{eq:Boundonxks-wocrn}
    \begin{split}
    \| \BFX_{t_{j}} - \BFX_{\ell_{j}} \| \leq \sum_{i \in \mcA_j} \| \BFX_{i} - \BFX_{i+1} \| &\leq \sum_{i \in \mcA^h_j} \Delta^h_{i} + \sum_{i \in \mcA^{\ell}_j} \Delta^{\ell}_{i} \\
    &\leq \dfrac{2(f^h(\BFX_{t_{j}})-f^h(\BFX_{\ell_{j}}))}{c_{fd}}.
    \end{split}
    \end{equation}
    Now define $\mcB_j = \{k \in \mcK : \ell_j \le k < t_{j+1}\}$, where $\mcK = \mcH \cup \mcL$. 
    Let $k \in \mcB_j$ for sufficiently large $j$. From Lemma \ref{lem:mfmc-asfinite}, \eqref{eq:frd-out}, \eqref{eq:frd-in}, and the fact that $\Delta_k^h > \Delta_k^{\ell}$ for any $k\in\mbN$, we obtain $f^h(\BFX_{k})-f^h(\BFX_{k+1}) \geq 0.5 \kappa_R(\Delta^{\ell}_k)^2,$
    implying that the sequence $\{f^h(\BFX_k)\}_{k\in\mcA_j\cup \mcB_j}$ is monotone decreasing for sufficiently large $j$.
    The boundedness of $f^h$ from below then implies that the right-hand side of \eqref{eq:Boundonxks-wocrn} converges to 0 as $j$ goes to infinity, concluding that $\lim_{j\rightarrow\infty} \| \BFX_{t_{j}} - \BFX_{\ell_{j}} \| = 0.$ Consequently, by continuity of the gradient, we obtain that $\lim_{j\rightarrow\infty} \| \nabla f^h(\BFX_{t_{j}}) - \nabla f^h(\BFX_{\ell_{j}}) \| = 0$. However, this contradicts $\| \nabla f^h(\BFX_{t_{j}}) - \nabla f^h(\BFX_{\ell_{j}}) \| > 0.5\epsilon_{0}$, obtained from (\ref{eq:Subsequenceong1}) and (\ref{eq:Subsequenceong2}). Thus, (\ref{eq:lim-conv}) must hold.

\end{proof}

\section{Numerical Experiments}
\label{sec:numerical}
We will now assess and compare ASTRO-BFDF with other simulation optimization solvers. { The experiments consider two categories of test problems: } synthetic problems and problems with discrete event simulation (DES).

Synthetic problems constitute deterministic problems with artificial Gaussian noise. { Since the analytical form of } $f^h$ { is known}, generating numerous problems that adhere to predetermined assumptions becomes relatively straightforward. However, since both the function $f^h$ and the stochastic noises are artificially generated, the performance of the solvers on these problems { may not reflect solver effectiveness on real-world problems. } In particular, when the same random number stream is used, the stochastic noises at different design points will be identical, implying that $F^h(\cdot,{\xi^h}) - f^h(\cdot)$ is a constant function given fixed ${\xi^h} \in {\Xi^h}$. This setting satisfies a stricter assumption than the one posed in this paper.
{ To provide more realistic testing, } we also evaluated the solvers on problems { involving } DES, { which mimics } real-world conditions { and generates }  multiple outputs utilized within the objective function. All experiments have been implemented using SimOpt~\cite{eckman2022simopt}.

{We evaluate the performance of each algorithm through a two-stage experimental design. In the first stage, each solver is tested on every problem instance through 20 independent optimization runs under a problem-specific simulation budget. This budget governs all aspects of the solver’s decision-making process, such as where to evaluate and how many times to sample. In the bi-fidelity setting, simulation costs vary by fidelity level; we assign a unit cost to HF evaluations (i.e., $w^h = 1$), while LF evaluations incur a fractional cost (e.g., $w^{\ell} = 0.5$). For example, querying the HF model 10 times and the LF model 20 times would consume $10 \times w^h + 20 \times w^{\ell} = 20$ units of budget. The reported budget accounts only for simulation costs; computational overhead, such as solving TR subproblems or updating surrogate models in BO, is excluded, as the problem setup (outlined in Section \ref{sec:intro}) assumes simulations to be the dominant computational cost. In the second stage, after obtaining the solution sequences $\{\BFX_k\}$ from all runs, each visited solution is evaluated using 200 HF replications. These evaluations provide unbiased objective estimates and ensure fair comparisons across solvers.}

We compare ASTRO-BFDF { with } ASTRO-DF, {Bi-fidelity Bayesian Optimization (BFBO)~\cite{do2023multi}, Bi-fidelity sample average gradient (BFSAG)~\cite{de2020bi}}, and ADAM~\cite{kingma2017adam}.
Details of the implementation { as well as hyperparameter tuning } can be found in Appendix~\ref{apdx:implementation}. In implementing the solvers, we applied CRN, which involves using the same random { input $\omega_i$ to reduce the variance of BFMC-based function estimates and the estimated function reductions between design points. That is, both HF and LF evaluations use correlated inputs $\xi^h(\omega_i)$ and $\xi^{\ell}(\omega_i)$, and the same random input $\omega_i$ is also shared across different design points. } To integrate CRN into ASTRO-BFDF, each time a local model is constructed, the sample sizes and the coefficient at the center point, obtained through BFAS, are preserved and subsequently utilized for estimating the function values at other design points. { Although incorporating CRN often requires custom coding, SimOpt provides standardized interfaces that make integration straightforward. Without such infrastructure, implementing CRN and adaptive strategies would involve significant solver-specific effort, indicating a critical dependency of the numerical experiments on SimOpt.}

\subsection{Synthetic Problems}
We { consider } four deterministic { HF functions } $f^h$ { and construct LF counterparts based on a correlation parameter $\kappa_{cor}$. The LF function $f^{\ell}(\BFx, \kappa_{cor})$ may closely resembles or differ significantly from $f^h(\BFx)$ depending on $\kappa_{cor}$. Details of these functions are provided in Appendix~\ref{apdx:function-des}. } 
We have tested the problems with three different values of $\kappa_{cor}$: 0.1, 0.5, and 0.9. 
Regarding the stochastic noises, a more complex setup is required to determine whether BFAS can enhance computational efficiency. For instance, test problems should include an instance with a high variance of the LF oracle, which can make BFMC undesirable during the optimization. Thus, we examined the configuration in which the stochastic noises for the HF and LF oracles { follow $\mathcal{N}(0,c_{sd} + 0.05X_k[0])$ with $c^h_{sd} \in \{5, 10, 15\}$, where $X_k[0]$ denotes the first coordinate of the current solution $\BFX_k$. The additional term $0.05x[0]$ ensures that noise terms vary across design points under CRN. Combining four HF functions, three LF configurations, and three noise settings for each fidelity yields a total of $4 \times 3 \times 3 \times 3 = 108$ synthetic test problems.}

\begin{figure} [htp]
\centering
\subfloat[{$\kappa_{cor}=0.1$}]{%
\resizebox*{7cm}{!}{\includegraphics{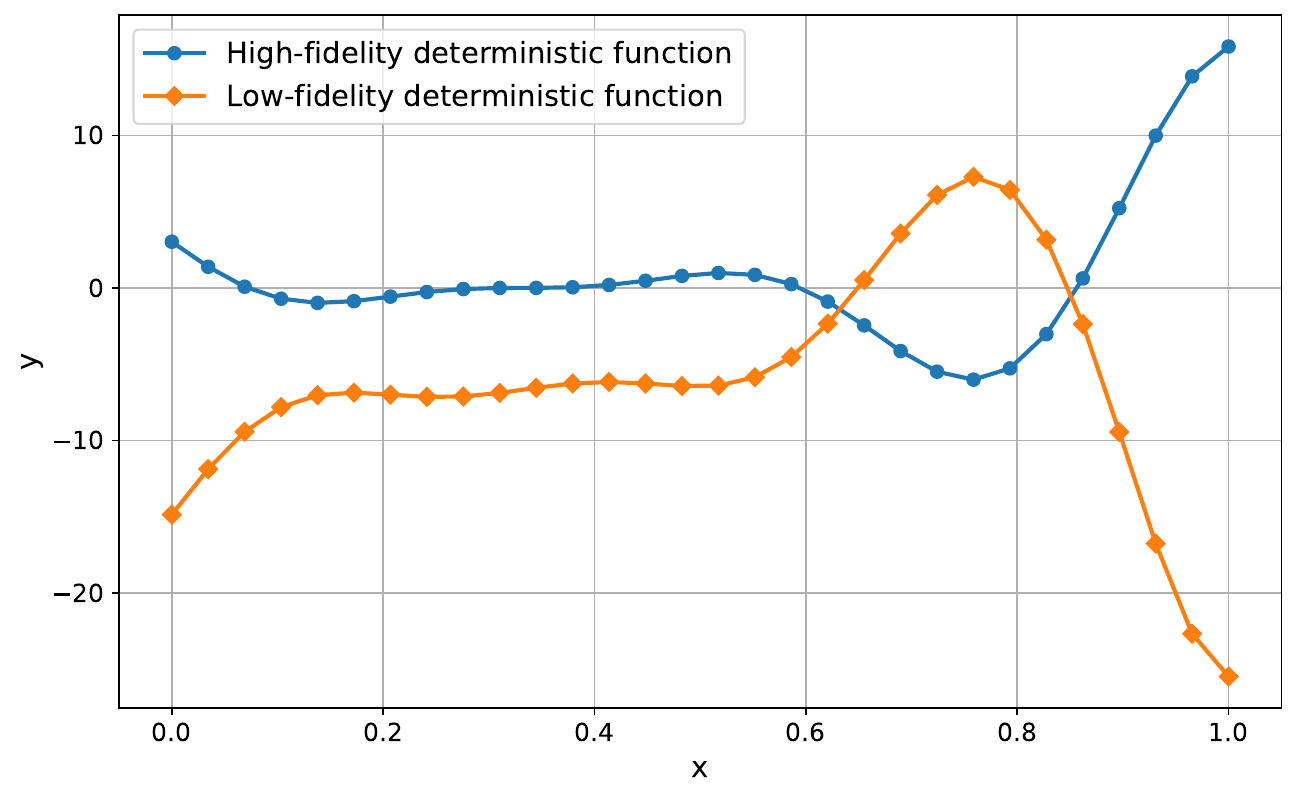}}\label{fig:for-cor-0.1}}
\subfloat[{$\kappa_{cor}=0.9$}]{%
\resizebox*{7cm}{!}{\includegraphics{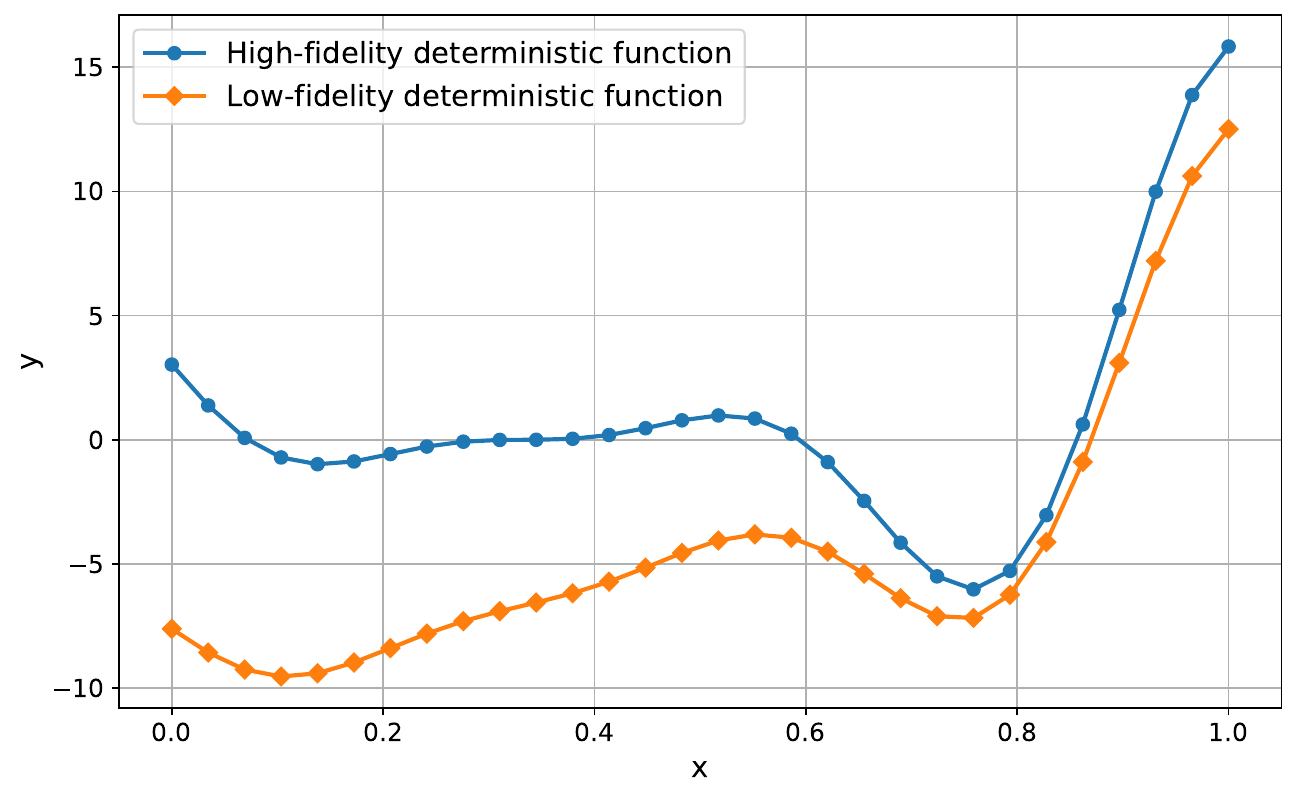}}\label{fig:for-cor-0.9}}
\caption{{Loss landscapes of Forretal functions with varying $\kappa_{cor}$.}} 
\label{fig:for-loss}
\end{figure}

\begin{figure} [htp]
\centering
\subfloat[{$\kappa_{cor}=0.1$}]{%
\resizebox*{7cm}{!}{\includegraphics{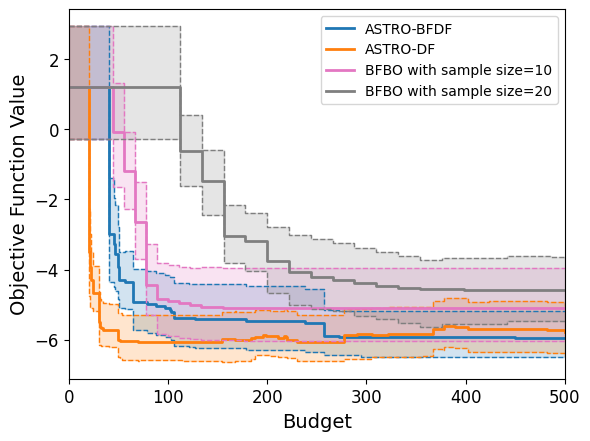}}\label{fig:for-cor-0.1}}
\subfloat[{$\kappa_{cor}=0.9$}]{%
\resizebox*{7cm}{!}{\includegraphics{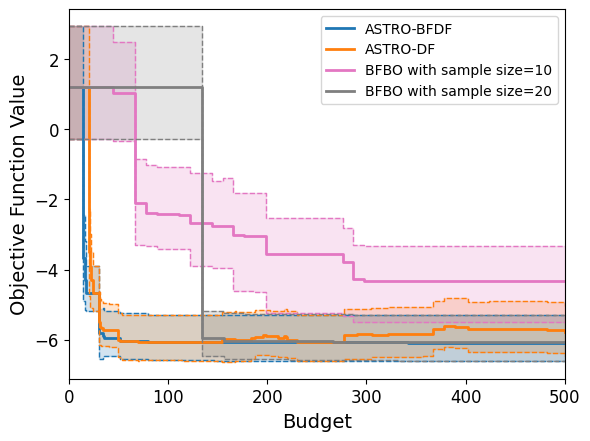}}\label{fig:for-cor-0.9}}
\caption{{Finite-time performance on Forretal functions with cost ratio 1:0.1, $c_{sd}^h=c_sd^{\ell}=15$, and (a) $\kappa_{cor} = 0.1$ and (b) $\kappa_{cor} = 0.9$.}} 
\label{fig:for-finite-performance}
\end{figure}

{We begin by presenting numerical results on synthetic problems with varying $\kappa_{cor}$. Although $\kappa_{cor}$ does not directly represent linear similarity between $f^h$ and $f^{\ell}$, it induces significant changes in LF function behaviors (see Figure~\ref{fig:for-loss}). When $\kappa_{cor} = 0.9$, the LF function provides useful information—such as the sign of the gradient—that supports HF optimization, whereas for $\kappa_{cor} = 0.1$, it provides minimal benefit. To assess the impact of $\kappa_{cor}$ on solver performance, we first tested ASTRO-BFDF, ASTRO-DF, and BFBO on the Forrester functions shown in Figure~\ref{fig:for-loss}, augmented with stochastic noise. See Figure~\ref{fig:for-finite-performance}. ASTRO-BFDF reaches near-optimal solutions faster when $\kappa_{cor} = 0.9$ compared to $\kappa_{cor} = 0.1$. Notably, ASTRO-DF converges faster than ASTRO-BFDF when $\kappa_{cor}=0.1$, but it often accepts suboptimal solutions because of stochastic noise. In contrast, ASTRO-BFDF ultimately attains better solutions by leveraging BFMC for variance reduction in function estimates. 
BFBO with a sample size of 20 fails to reach the optimum when $\kappa_{cor} = 0.1$. These results indicate that BFBO is effective only under strong correlation, as discussed in Section~\ref{sec:intro}. Lastly, when the sample size is reduced to 10 in BFBO, the function estimates are not sufficiently accurate, preventing it from achieving the optimal solution in either case. This further shows that BFBO requires highly accurate function estimates to effectively solve stochastic optimization problems, which makes it inefficient and challenging to determine appropriate sample sizes.}

{We now compare solver performance across all 108 synthetic problems using solvability profiles, a standard tool for evaluating solver effectiveness in simulation optimization. Each curve in a profile represents the fraction of problems that a solver successfully solves within a specified relative optimality gap. For example, ADAM solves about 20\% of the problems to within a 1\% optimality gap after using 30\% of the budget allocated to each problem in Figure~\ref{fig:syn-0.1}. The optimality gap is calculated based on the best solution found among all tested solvers, even when the true optimum is known, to mimic practical conditions and allow fair comparisons.}

\begin{figure} [htp]
\centering
\subfloat[{Cost ratio is 1:0.1.}]{%
\resizebox*{7cm}{!}{\includegraphics{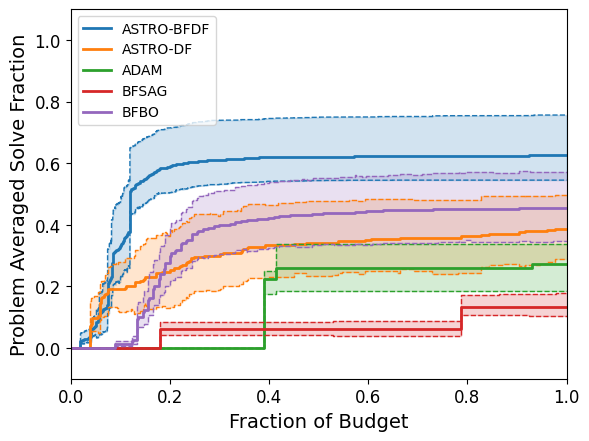}}\label{fig:syn-0.1}}
\subfloat[{Cost ratio is 1:1.}]{%
\resizebox*{7cm}{!}{\includegraphics{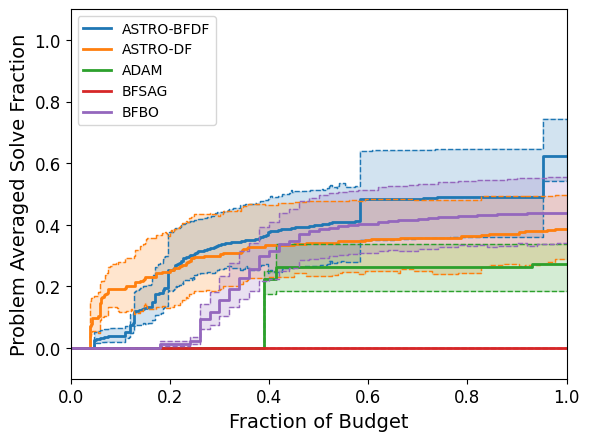}}\label{fig:syn-1}}
\caption{Fraction of 108 synthetic problems solved to { within a 1\% relative optimality gap (i.e., 0.01-optimality), } with 95\% confidence intervals from 20 runs of each algorithm. {In (a) and (b), the cost ratios are 1:0.1 and 1:1, respectively.}} 
\label{fig:solvability-profile}
\end{figure}

{In Figure~\ref{fig:solvability-profile}, } when the cost ratio of calling HF and LF oracles stands at 1:0.1, ASTRO-BFDF emerges as a standout performer, solving over {60}\% of the problems within a mere {20}\% of the budget. { Even when LF evaluations cost the same as HF evaluations (1:1), ASTRO-BFDF finds better solutions than ASTRO-DF (see Figure \ref{fig:syn-1})}. This suggests that utilizing the LF function could be beneficial for optimization, {even when its computational cost is comparable to that of the HF function. } Hence, we will next delve deeper into the specific scenarios where leveraging the LF function proves advantageous for optimization.

{Previous studies}~\cite{muller2020bfgo,song2019radial} { often use } the correlation between LF and HF functions { (or between LF and } a surrogate model) to determine whether { LF evaluations should be employed}. However, even though the LF function may exhibit a high correlation with the HF function in specific feasible regions, its usefulness can vary based on the optimization {  trajectory, } such as the initial design point. Hence, { correlation alone might not be a reliable indicator of LF usefulness. }
Instead of requiring { strong global correlation, it is often sufficient to obtain an accurate gradient approximation at the current iterate. } 
{ By incorporating an adaptive correlation constant and BFAS for variance reduction, ASTRO-BFDF achieves more accurate gradients and outperforms other solvers, even when there is significant bias between $f^h$ and $f^\ell$. To confirm this numerically, we now present results for problems where only the correlation constant $\kappa_{cor}$ varies.} 

\begin{figure} [htp]
\centering
\subfloat[Low correlation ($\kappa_{cor}=0.1$)]{%
\resizebox*{7cm}{!}{\includegraphics{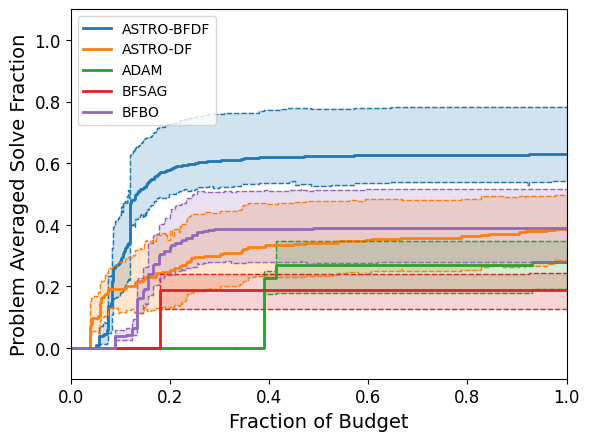}}\label{fig:cor-0.1}}
\subfloat[High correlation ($\kappa_{cor}=0.9$)]{%
\resizebox*{7cm}{!}{\includegraphics{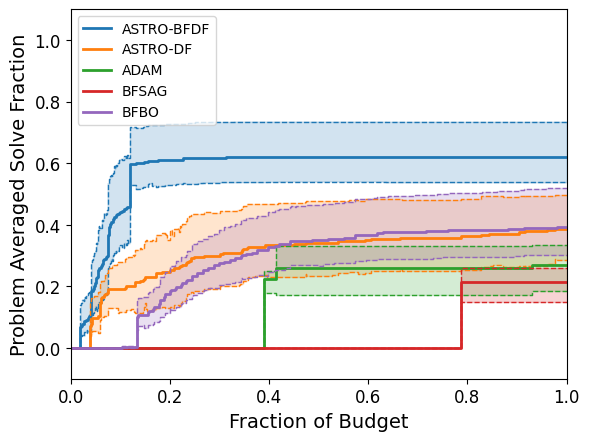}}\label{fig:cor-0.9}}
\caption{Solvability profiles of 36 problems { measured  at a 0.01 optimality gap, } with 95\% confidence intervals from 20 runs of each algorithm, { under } two different correlation settings between the LF and HF functions.} 
\label{fig:correlation-comparison}
\end{figure}

In Figure \ref{fig:correlation-comparison}, ASTRO-BFDF consistently demonstrates superiority regardless of the correlation between the HF and LF functions. {When the correlation is low (see Figure \ref{fig:cor-0.1}), ASTRO-BFDF initially converges more slowly than ASTRO-DF during the first 10\% of the budget, as it evaluates whether $M_k^{\ell}$ provides useful information. Once it prioritizes $M_k^h$, ASTRO-BFDF converges more rapidly to better solutions, benefiting from more accurate gradient approximations enabled by BFAS, compared to ASTRO-DF. }

The usefulness of the LF function in providing accurate gradient estimates can be maximized when it possesses unique structural properties, such as convexity, which { may guide the search toward } the global optimum of the non-convex HF function. In this case, BF optimization remains advantageous { even when the LF oracle has } high variance and cost. However, the opposite scenario is also possible, where the optimum of the LF function is located near a local optimum of the HF function, which { may hinder progress}. 

\begin{figure} [htp]
\centering
\includegraphics[width=0.9\columnwidth]{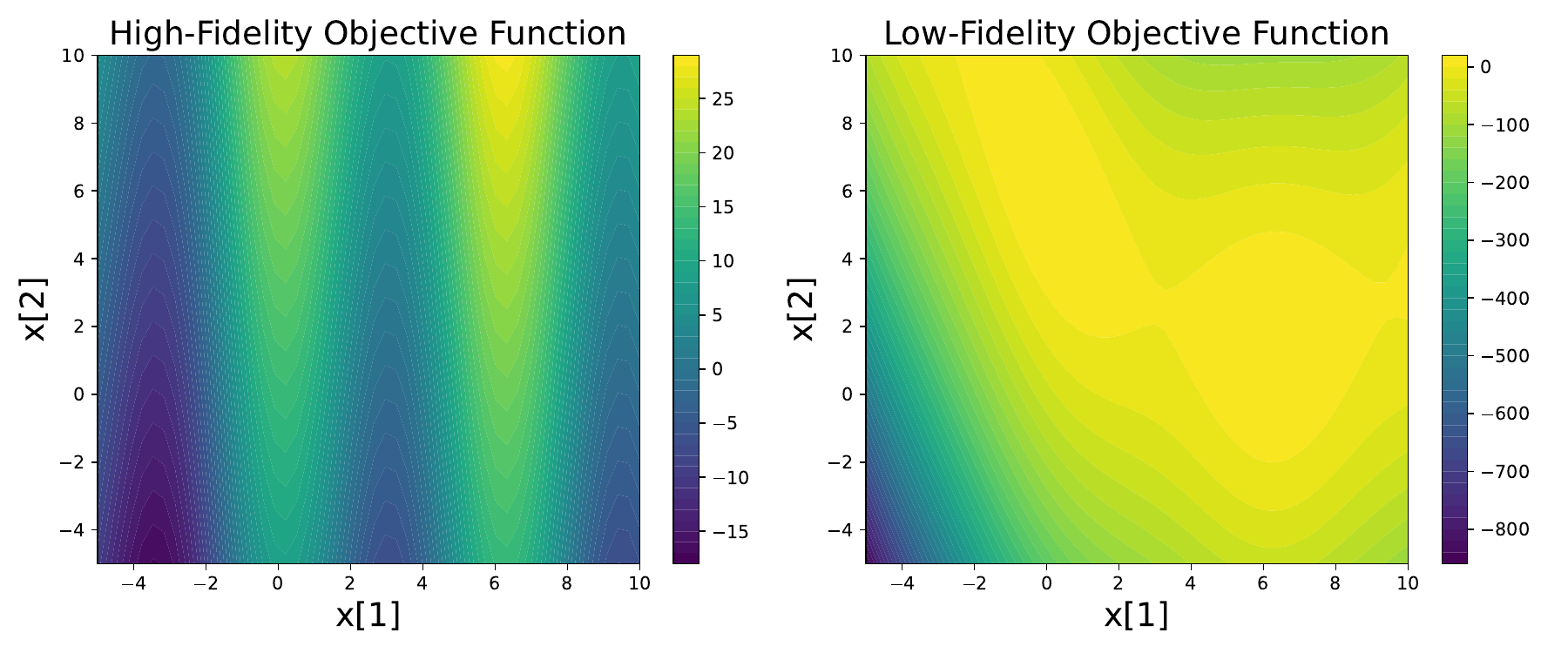}
\caption{The contour maps of the HF and LF function without stochastic noises of the BRANIN problem with {$\kappa_{cor}=0$}. } 
\label{fig:branin-objective}
\end{figure}

\begin{figure} [htp]
\centering
\subfloat[$x_0$ = (0.5,0.5)]{%
\resizebox*{7cm}{!}{\includegraphics{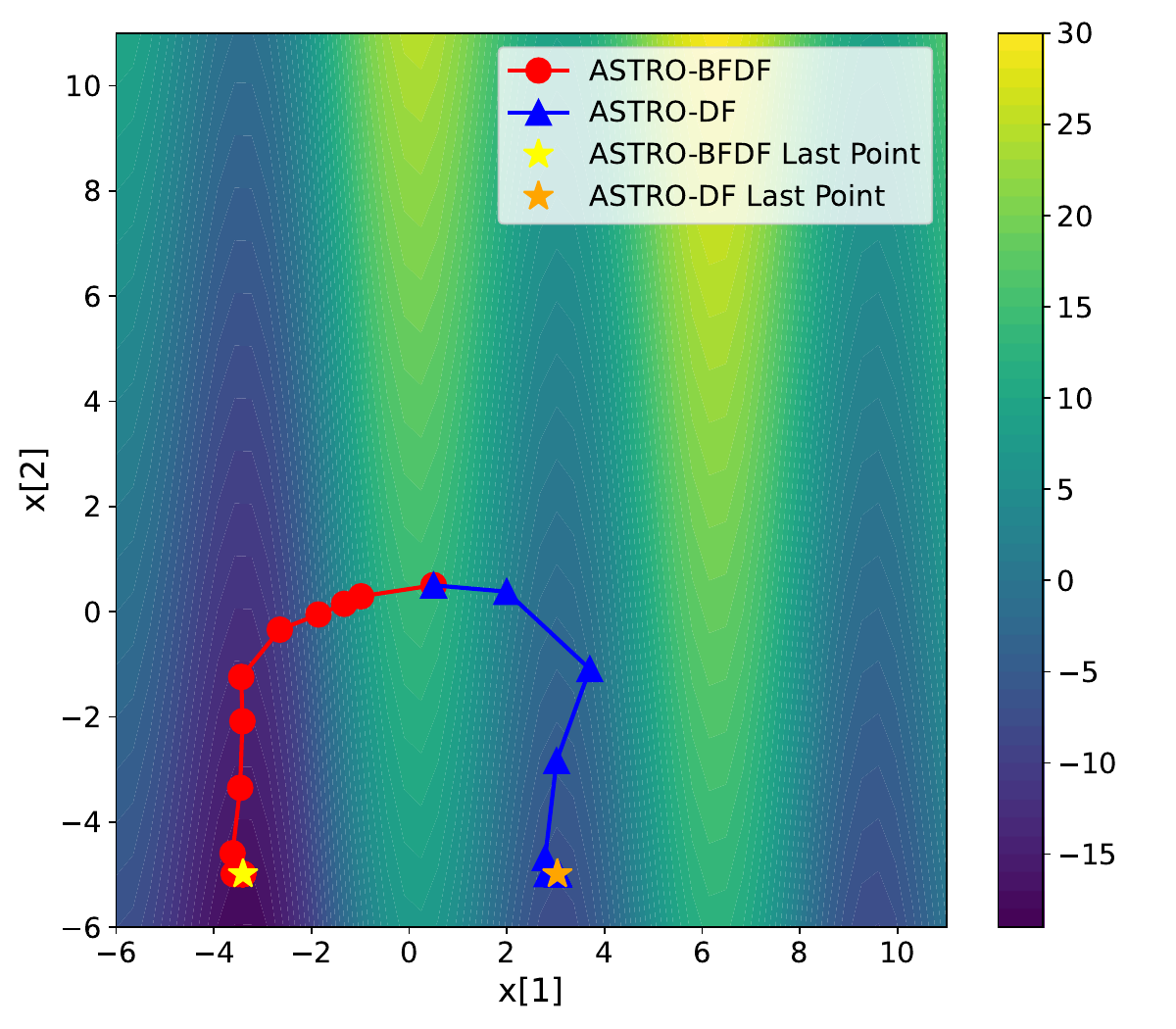}}}
\subfloat[$x_0$ = (6,6)]{%
\resizebox*{7cm}{!}{\includegraphics{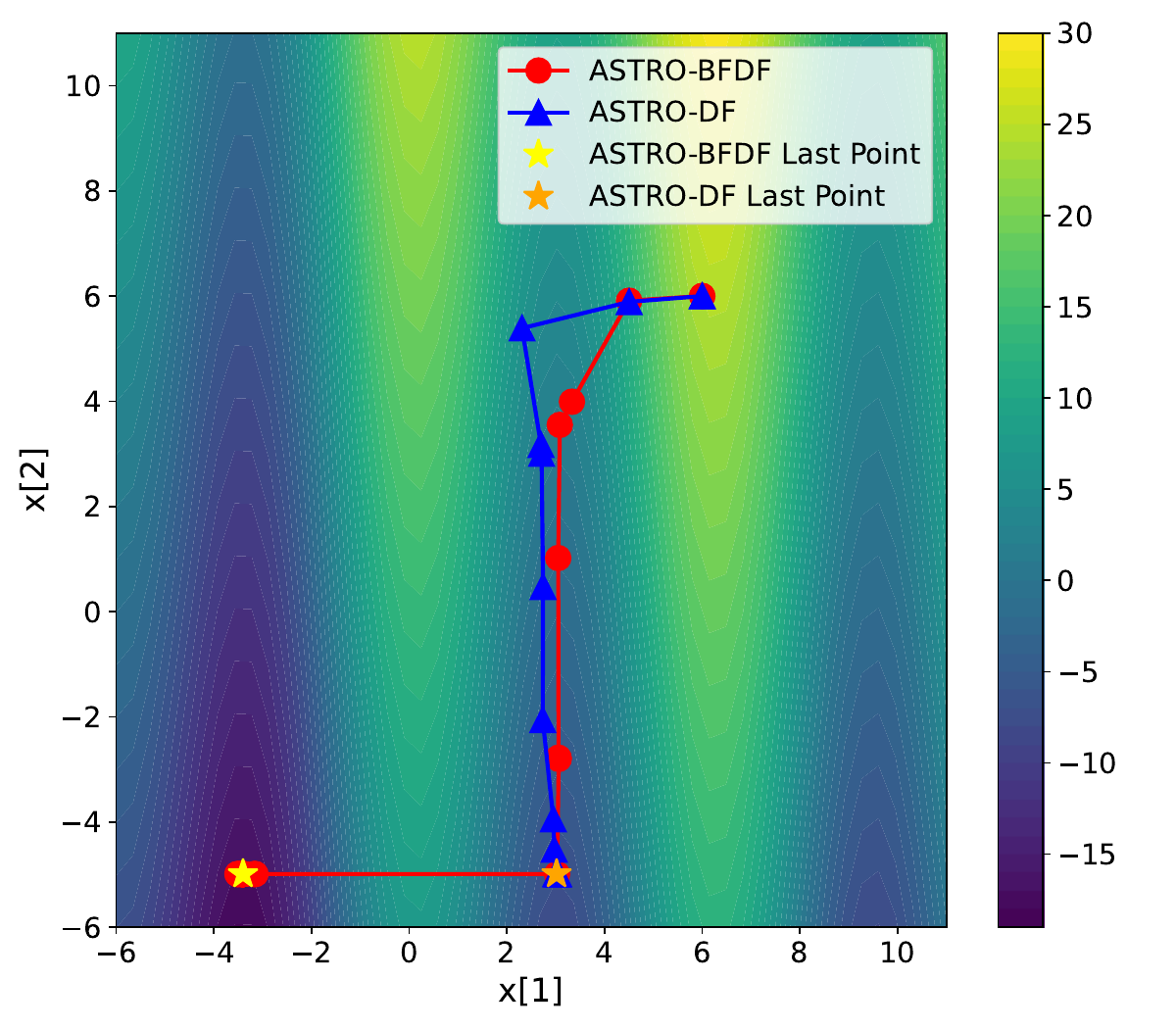}}\label{fig:traj-branin-6}}
\caption{{One sample path of $\{\BFX_k\}$ with ASTRO-DF and ASTRO-BFDF on Branin function with $\kappa_{cor}=0$ and a budget of 1000 HF oracle calls.}} 
\label{fig:trajectory-branin}
\end{figure}

The Branin function is an example for which BF optimization { can be highly effective } due to the structure of the LF function. In Figure \ref{fig:branin-objective}, {the LF function } possesses a favorable structure, {which aligns with the global optimum of the HF function. Consequently, although directly finding the global optimum of the HF function is challenging due to its non-convexity, } the solver { can leverage the LF function to guide the search toward the region near the global optimum. } {See Figure~\ref{fig:trajectory-branin}. Since the HF function is non-convex, ASTRO-DF often converges to a local optimum. In contrast, ASTRO-BFDF leverages the LF function and guides $\{\BFX_k\}$ toward the global optimum. Notably, even if $\{\BFX_k\}$ initially approaches to a local optimum, ASTRO-BFDF can escape by using locally convex structure of the LF function and ultimately reach the global optimum (see Figure \ref{fig:traj-branin-6}). } 

\subsection{Problems with DES}
In this section, we test more realistic problems using DES for the HF and LF simulation oracles. { Specifically, we consider } two problems: an M/M/1 queue problem and an inventory problem. In both cases, the DES model operates until a defined end time, denoted { by } $T$, thereby enabling the acquisition of BF results through variations in $T$. {A simulation model with a longer end time is typically considered an HF model, as longer runs tend to produce more accurate estimates of the simulation output. For example, consider the case where the objective is to minimize inventory costs over 100 days. The HF model simulates the entire 100-day inventory system, capturing long-term dynamics and providing more accurate estimates of cumulative costs. In contrast, the LF simulation model runs over a shorter horizon, such as 30 days, which reduces computational time but may yield less precise estimates of steady-state performance. }  In this setting, the cost ratio between the HF and LF models stands at $1:0.3$. A notable { feature } of this problem is that running one replication of the HF model inherently produces one replication of the LF model without incurring additional computational expenses.

\begin{figure} [htp]
\centering
\includegraphics[width=0.7\columnwidth]{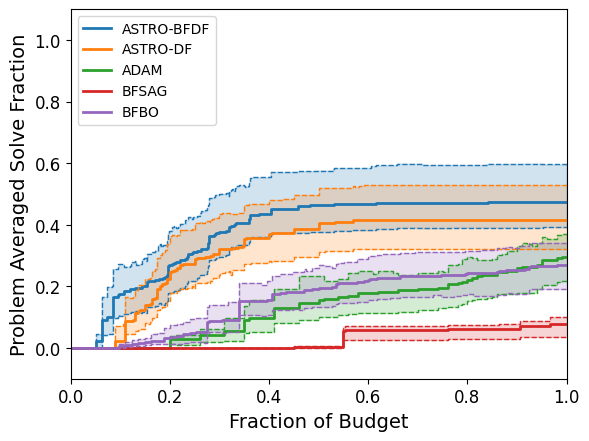}
\caption{{Solvability profiles of 25 problems with DES measured  at a 0.01 optimality gap, with 95\% confidence intervals from 20 runs of each algorithm. }} 
\label{fig:solva-des}
\end{figure}

Before delving into the details of each problem, we { present } the solvability profile { for } 25 instances (See Figure~\ref{fig:solva-des}), including 5 instances from the M/M/1 problem and 20 instances from the inventory problem. The cost ratio between the HF and LF models for both problems is $1:0.3$, indicating that the LF oracle simulates the system for $0.3T$ days. {ASTRO-BFDF demonstrates not only a faster convergence but also an enhanced ability to identify superior solutions by the end of the allocated budget.}

\subsubsection{M/M/1 Queue Problem}
We employ a model that simulates { an } M/M/1 queue, characterized by exponential distributions for both inter-arrival and service times. { Let $\lambda$ and $\mu$ denote the rate parameters for interarrival and service times, respectively. The HF M/M/1 queue model simulates 100 random arrivals into the system, generating inter-arrival and service times for each customer. Based on these stochastic realizations, it estimates the average sojourn time for each arrival, representing the time a customer spends in the system from entry to departure under a given configuration. The randomness in the arrival and service processes is captured by $\xi^h$; therefore, employing CRN implies generating the same sequence of arrivals and service times across different system configurations. The HF function is defined as $F^h(\mu,\xi^h) = 100^{-1}\sum_{i=1}^{100} s_i(\mu,\xi^h) + 0.1\mu^2$, where $s_i(\mu,\xi^h)$ denotes the sojourn time of the $i$-th customer, and the term $ 0.1\mu^2$ represents a penalty interpreted as an investment to increase the service rate $\mu$. Therefore, } the objective is to minimize the expected average sojourn time and a penalty, where $\mu$ acts as the decision variable. 

\begin{figure} [htp]
\centering
\subfloat[Independent sampling and $\lambda = 1$]{%
\resizebox*{7cm}{!}{\includegraphics{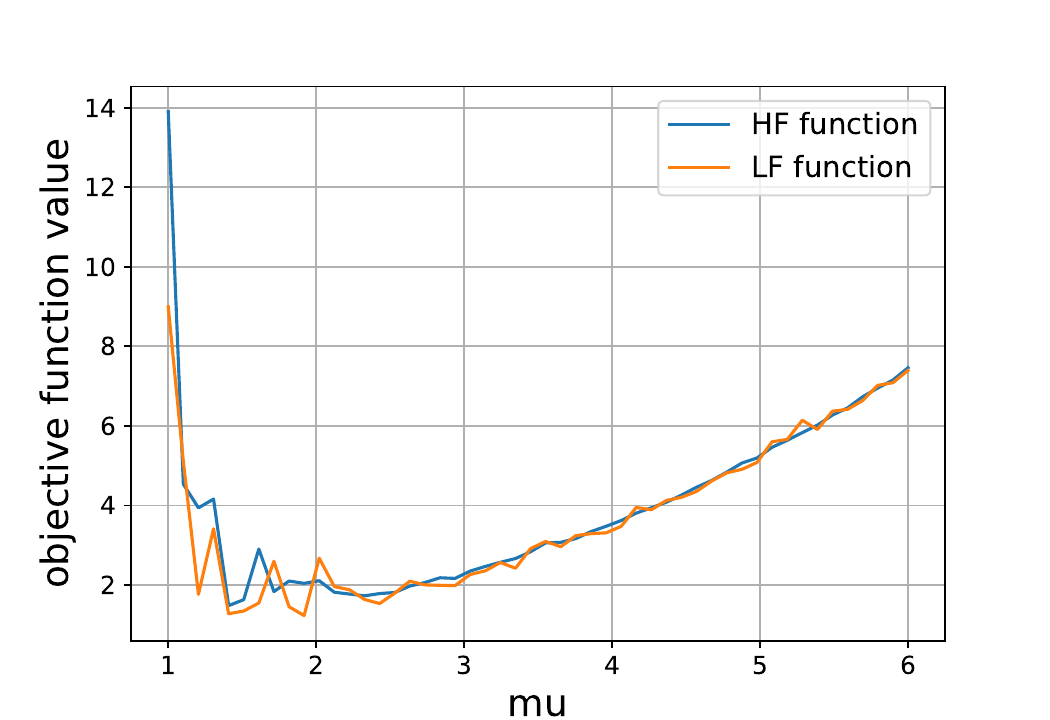}}\label{fig:mm1wocrn}}
\subfloat[CRN and $\lambda = 1$]{%
\resizebox*{7cm}{!}{\includegraphics{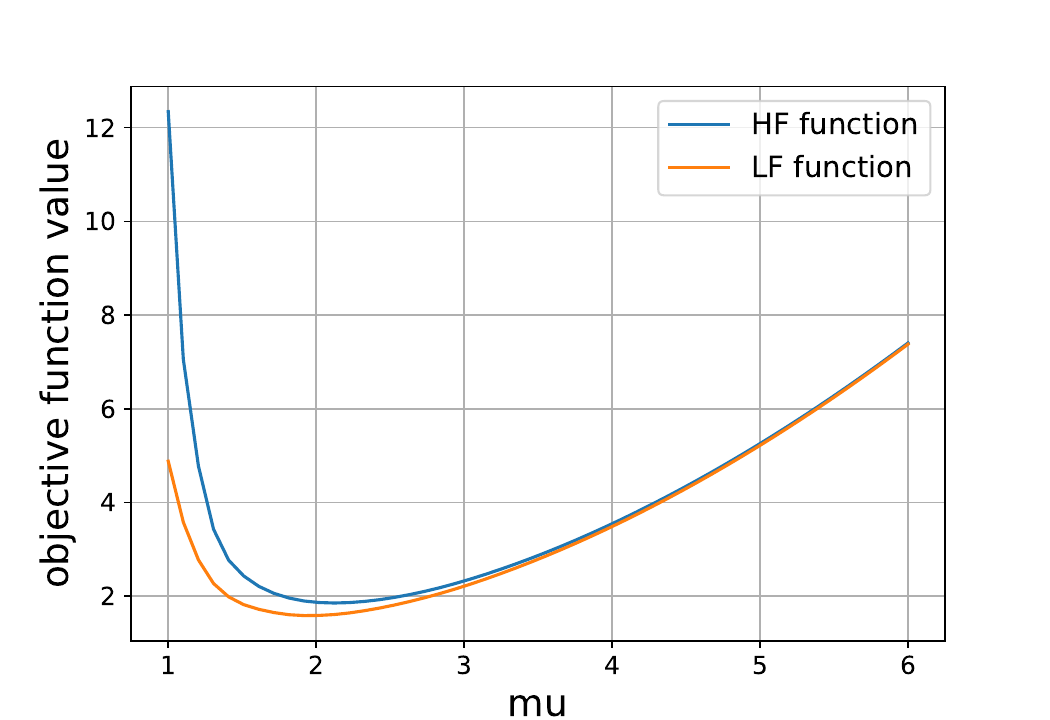}}\label{fig:mm1crn}}\\ 
\subfloat[Independent sampling and $\lambda = 5$]{%
\resizebox*{7cm}{!}{\includegraphics{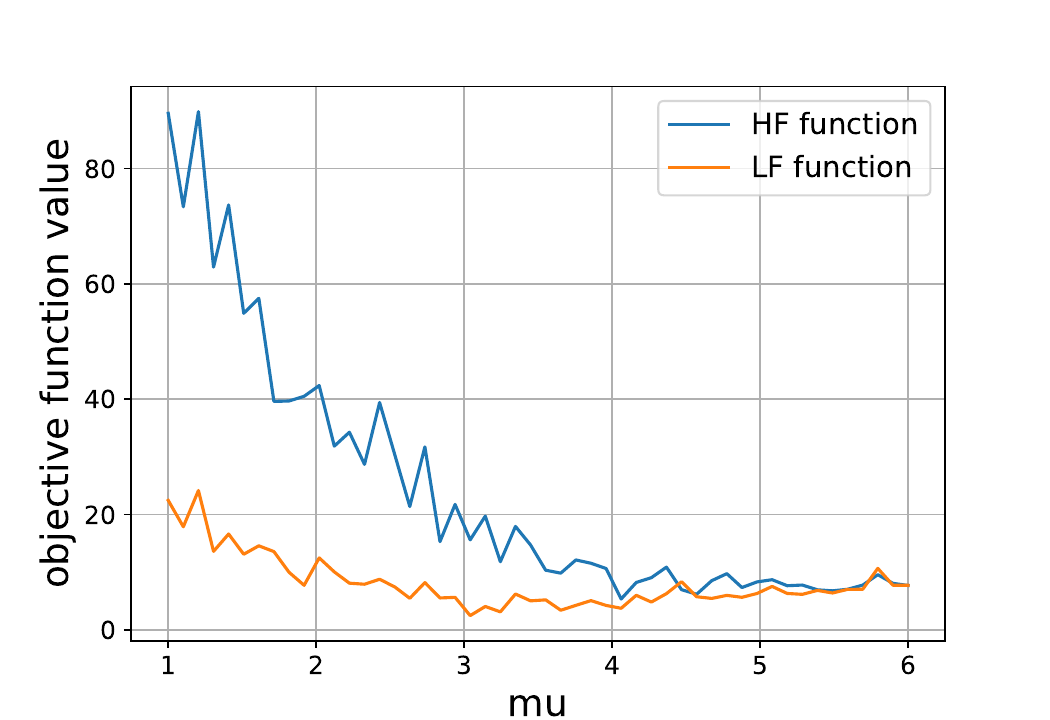}}\label{fig:mm1wocrn10}}
\subfloat[CRN and $\lambda = 5$]{%
\resizebox*{7cm}{!}{\includegraphics{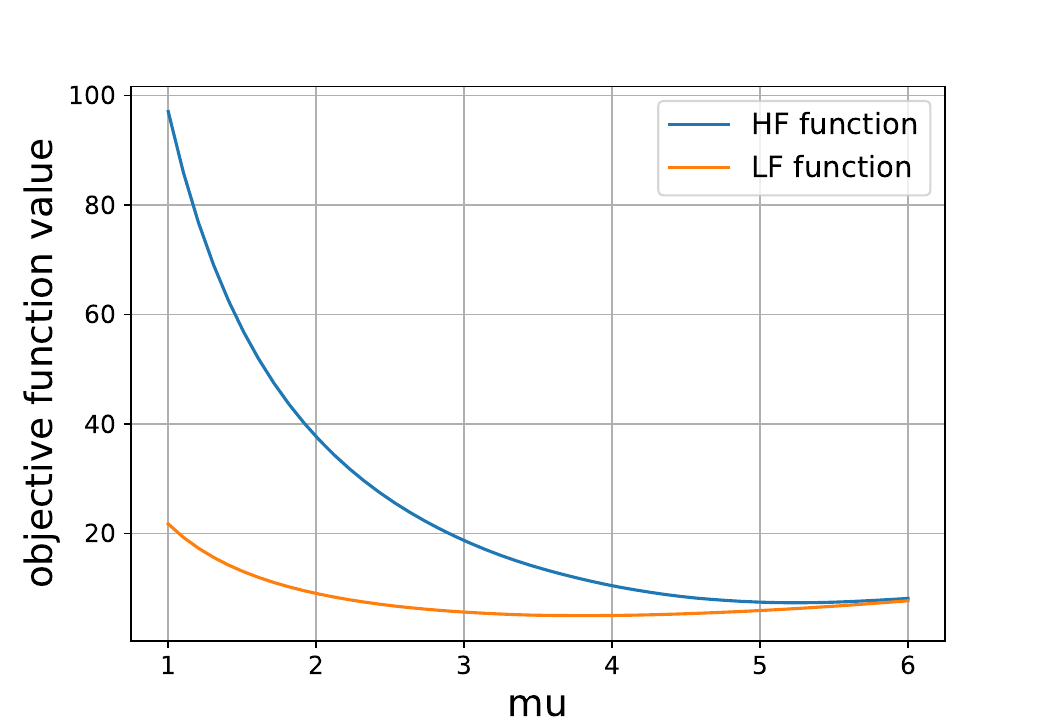}}\label{fig:mm1crn10}}
\caption{{Objective function trajectories } of the M/M/1 problem with and without CRN. When employing CRN, the { LF and HF } functions exhibit smoothness.}
\label{fig:mm1}
\end{figure}

One important characteristic of the problem is that $F^h(\cdot,\xi)$ and $F^{\ell}(\cdot,\xi)$ are smooth functions for any { $\xi^h\in\Xi^h$, as illustrated in } Figure~\ref{fig:mm1}. The difficulty in addressing this problem with the LF function { is } that its gradient is relatively smaller compared to that of the HF function, which becomes clearer when $\lambda$ { is large } (see Figure~\ref{fig:mm1wocrn10} and~\ref{fig:mm1crn10}). {In traditional TR methods, } the criterion for successful iterations hinges on comparing the reduction in { the } model and { the } function estimates.  
Therefore, when the { small } gradient of the LF function { leads to only a slight reduction in the model value}, a candidate point can be accepted, albeit leading away from the optimal solution. { To address this issue, } the additional condition is { introduced},  {$\Ftilde_k(\BFX_k^0)-\Ftilde_k(\BFX_k^{s,\ell}) \ge \eta\zeta(\Delta_k^h)^2$}, for a successful iteration in Algorithm~\ref{alg:TRO-LFDF} (see Remark~\ref{remark:suff-red-lf}).

\begin{figure} [htp]
\centering
\subfloat[$\lambda = 1$]{%
\resizebox*{7cm}{!}{\includegraphics{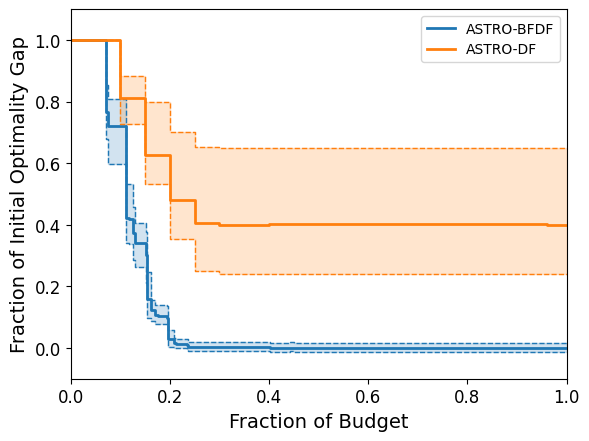}}\label{fig:MM1-1}}
\subfloat[$\lambda = 5$]{%
\resizebox*{7cm}{!}{\includegraphics{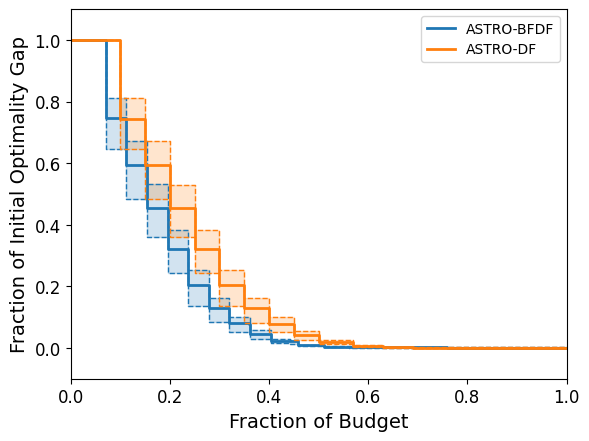}}\label{fig:MM1-5}}
\caption{Fraction of the optimality gap with 95\% confidence intervals from 20 runs of each algorithm. } 
\label{fig:MM1-opt-gap}
\end{figure}

We conducted { experiments } on 5 instances of the M/M/1 problem { by } varying $\lambda$ across the range $\{1, 2, \dots, 5\}$, { as shown in } Figure~\ref{fig:solva-des}. Figure~\ref{fig:MM1-opt-gap} illustrates the optimization progress for two scenarios: one where $\lambda = 1$ and another where $\lambda = 5$. 
In the scenario where $\lambda = 1$, as the incumbents approach the optimal solution, it becomes essential for the TR to contract appropriately to achieve an accurate gradient approximation. While contracting the TR, ASTRO-DF exhausts its budget entirely, which explains its slower convergence in Figure~\ref{fig:MM1-1}. In contrast, ASTRO-BFDF is capable of rapidly identifying a near-optimal solution.
The primary reason is that the gradient of the { LF-based } local model is inherently small, enabling us to sustain successful iterations before the TR initiates sequential contraction. Conversely, when $\lambda = 5$, the gradient of the local model for the LF function becomes { extremely small, } prompting a { termination } of LF function utilization after just a few iterations.  
{ As a result, } in Figure~\ref{fig:MM1-5}, the optimization trajectory of ASTRO-BFDF appears similar to that of ASTRO-DF, but ASTRO-BFDF demonstrates slightly faster convergence due to the variance-reduced function estimates provided by BFAS.

\subsubsection{$(s,S)$ Inventory Problem}
We { now } consider { an } $(s,S)$ inventory model. At each time step $t$, the demand $D_t$, which follows { an } exponential distribution with { mean } $\mu_D$, is generated. At the end of each time step, the inventory level is calculated { as $I^e_t = I^s_t - D_t + O_t$, where $I^e_t$ denotes the end-of-day inventory, $I^s_t$ is the starting inventory on hand, and $O_t$ is the amount of inventory that was ordered earlier and arrives on that day. } If { $I^e_t$ } is below $s$, an order is placed { to restore the inventory up to $S$}. Lead times follow { a } Poisson distribution with mean $\mu_L$ time steps. The { objective } is to find the best $s$ and $S$ for minimizing the average costs, which is composed of backorder costs, order costs, and holding costs. {Backorder costs refer to penalties incurred when demand cannot be immediately satisfied; order costs consist of a fixed cost applied whenever an order is placed, regardless of its size, and a variable cost based on the order quantity; and holding costs represent the cost of maintaining positive inventory at the end of each day. As in the M/M/1 problem, randomness in demands and lead times is captured by $\xi^h$; thus, employing CRNs ensures that the same sequence of demands and lead times is used across different system configurations. } 

\begin{figure} [htp]
\centering
\subfloat[Independent sampling]{%
\resizebox*{7cm}{!}{\includegraphics{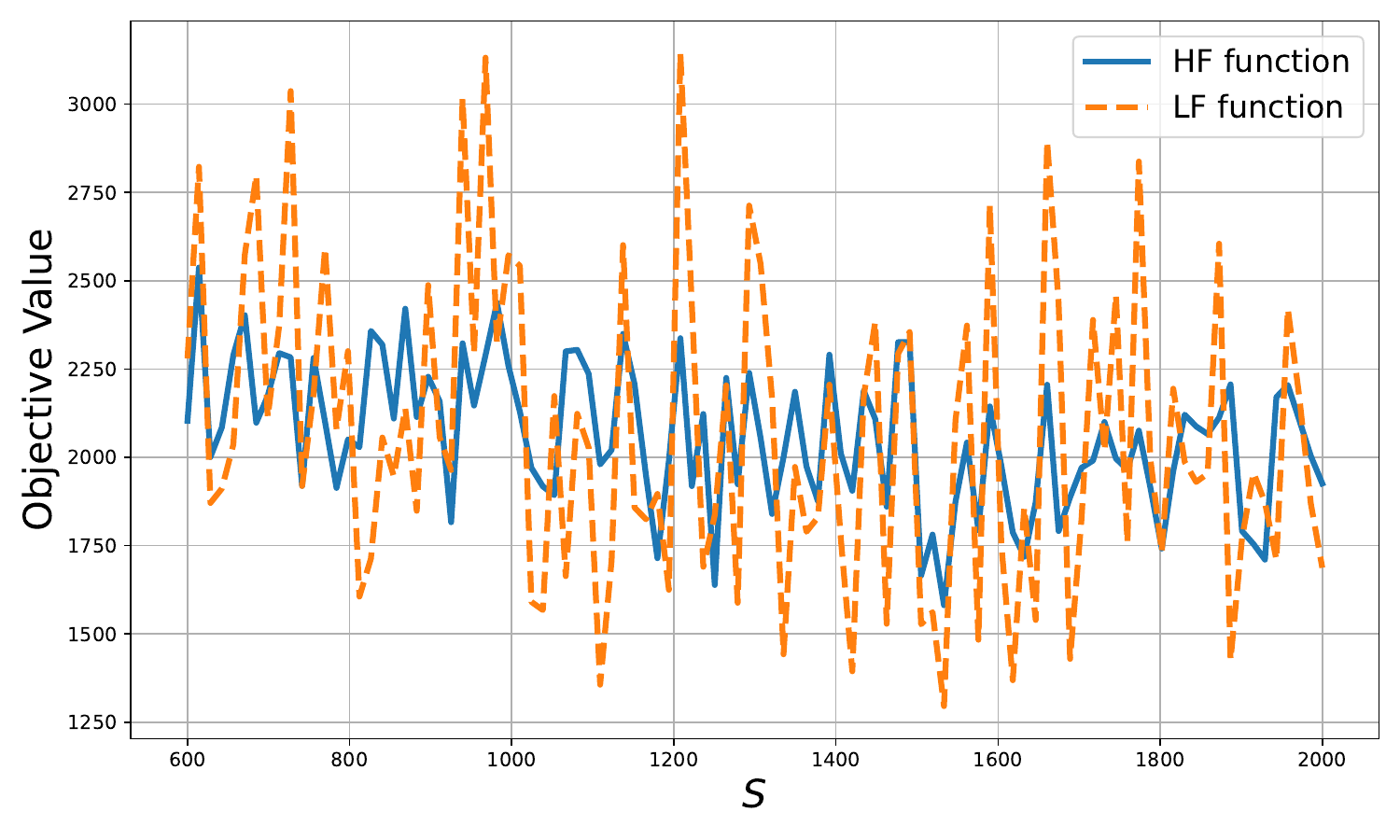}}\label{fig:sscontwocrn}}
\subfloat[CRNs]{%
\resizebox*{7cm}{!}{\includegraphics{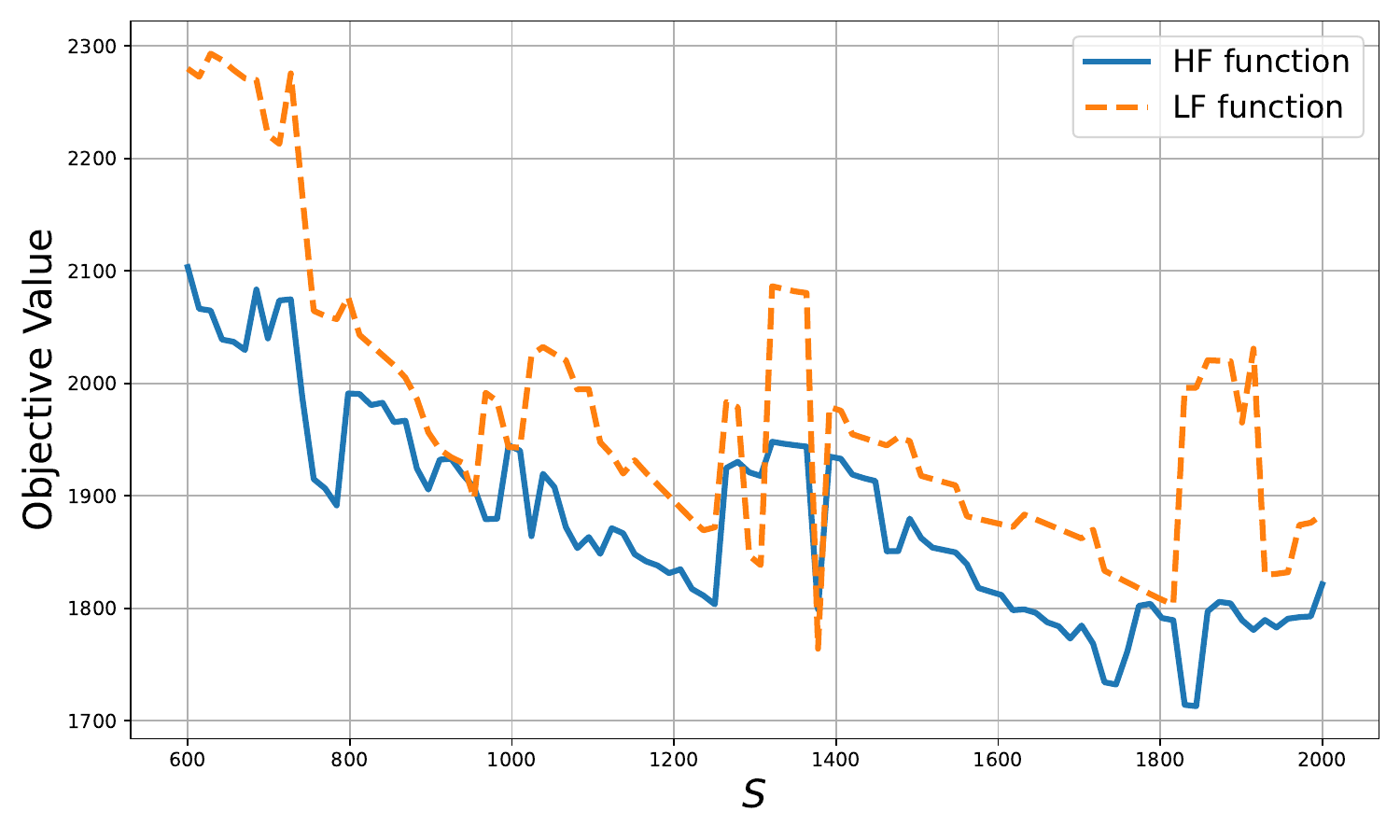}}\label{fig:sscontcrn}}
\caption{{Objective function trajectories of the inventory problem with and without CRN, evaluated over varying $S$ with fixed $s = 500$. The demand and lead time means are $\mu_D = 400$ and $\mu_L = 3$, respectively. }}
\label{fig:sscont-objective}
\end{figure}

\begin{figure} [htp]
\centering
\subfloat[$\{\BFX_k\}$]{%
\resizebox*{6cm}{!}{\includegraphics{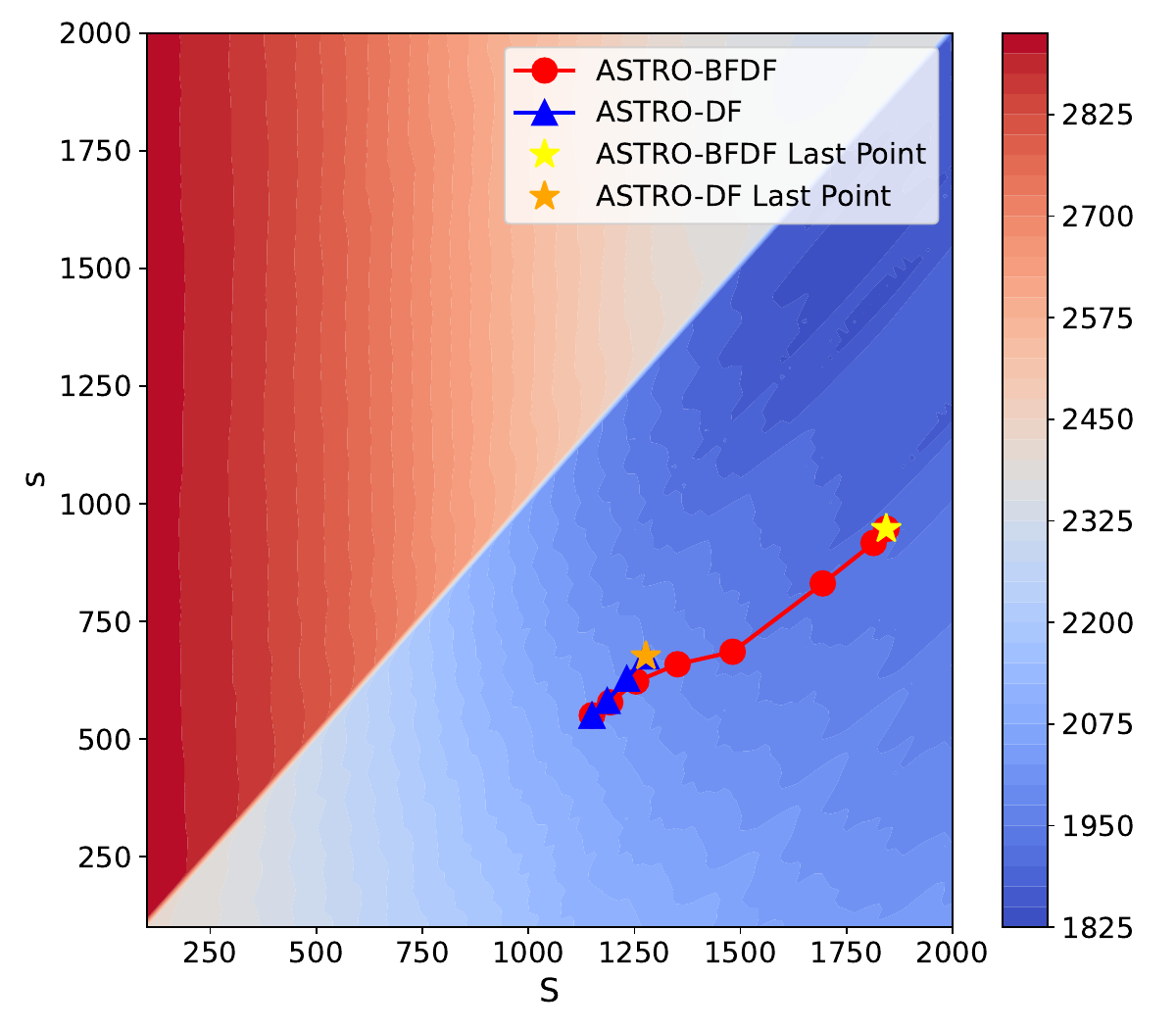}}\label{fig:traj-sscont}}
\subfloat[Objective Function Value]{%
\resizebox*{7cm}{!}{\includegraphics{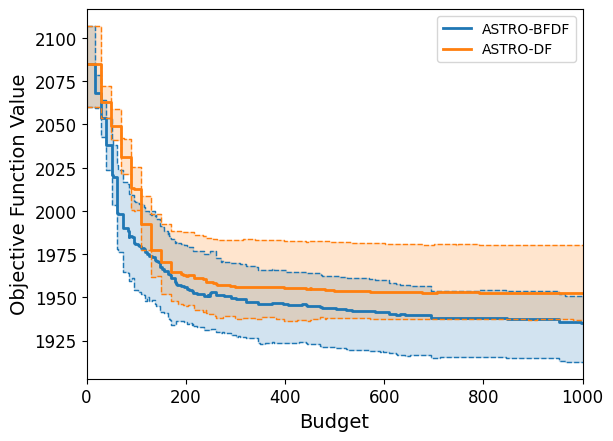}}\label{fig:400-3objective}}

\caption{{(a) illustrates a sample path of $\{\BFX_k\}$ for ASTRO-DF and ASTRO-BFDF on the inventory problem with $\mu_D=400$, $\mu_L=3$, and a budget of 1000 HF evaluations, where contours represent objective function estimates based on 50 samples. (b) presents the average objective function value over 20 runs under the same setting.}}
\label{fig:sscont}
\end{figure}

This problem is significantly more challenging than the M/M/1 problem due to the inherent non-smoothness (see Figure~\ref{fig:sscont-objective}). Therefore, it is highly probable that the majority of incumbent sequences converges to local optima, regardless of the solvers used. {See Figure~\ref{fig:sscont}. Although ASTRO-BFDF may still converge to local optima (see Figure~\ref{fig:sscont2}), it is generally more effective than ASTRO-DF at navigating the rugged objective landscape, often identifying better solutions by leveraging BF models. In particular, ASTRO-BFDF can explore a broader region of the search space by maintaining a larger $\Delta_k^h$, increasing the likelihood of escaping poor local optima in practice. } We { tested } 20 instances of the { inventory } problem with parameters { combinations } $\mu_D = \{25,50,100,200,400\}$ and $\mu_L = \{1,3,6,9\}$. In most cases, ASTRO-BFDF converged faster than ASTRO-DF, and sometimes { achieved } better solutions, { as evidenced by its higher fraction of solved problems at the budget limit (see Figure~\ref{fig:solva-des}).}

\begin{figure} [htp]
\centering
\subfloat[$\{\BFX_k\}$]{%
\resizebox*{6cm}{!}{\includegraphics{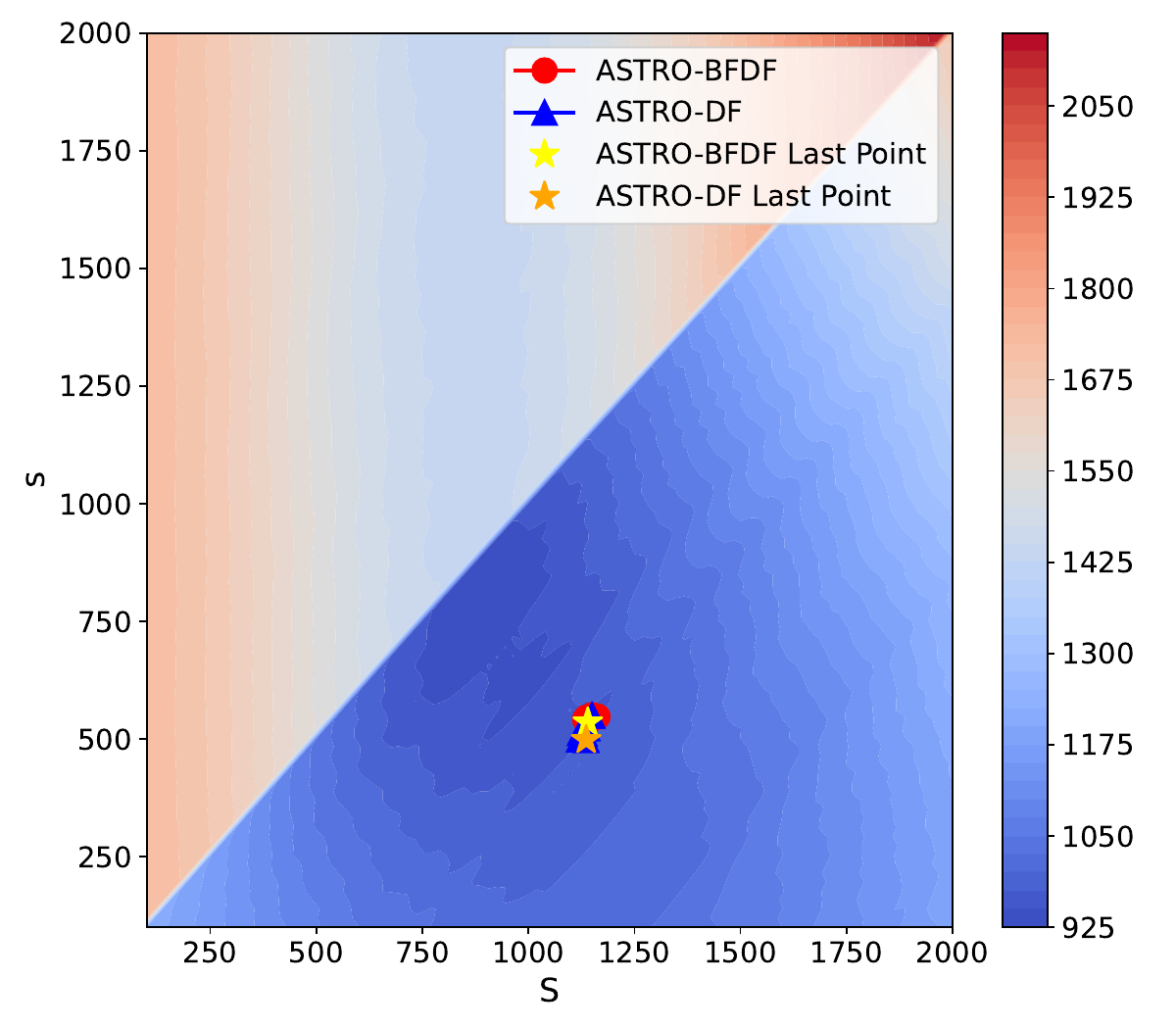}}\label{fig:traj-sscont}}
\subfloat[Objective Function Value]{%
\resizebox*{7cm}{!}{\includegraphics{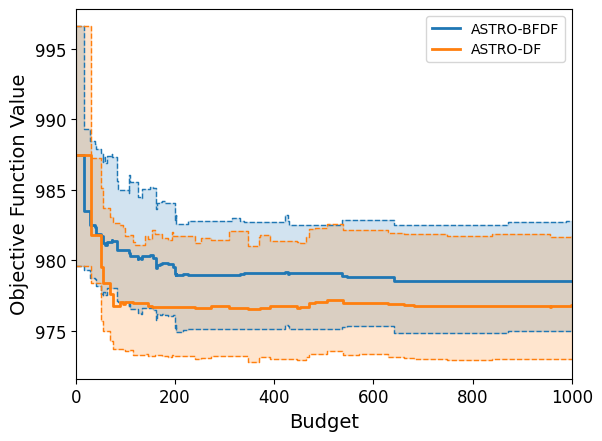}}\label{fig:200-3objective}}

\caption{{Same experimental setup as in Fig~\ref{fig:sscont} but with different parameters with $\mu_D = 200$ and $\mu_L = 3$. Both ASTRO-DF and ASTRO-BFDF failed to escape convergence to poor solution and the average difference in objective values between the two solvers is less than 5.}}
\label{fig:sscont2}
\end{figure}

\section{Conclusion}
This paper introduces ASTRO-BFDF, a novel stochastic TR algorithm tailored for addressing bi-fidelity simulation optimization. ASTRO-BFDF { is characterized by } two key features: First, it { dynamically } utilizes bi-fidelity Monte Carlo or crude Monte Carlo, { adaptively } adjusting sample sizes for both fidelity oracles within BFAS. 
Second, it guides incumbents toward the neighborhood of { a } stationary point of the HF function by utilizing the LF function. 
These two features { enable } faster convergence { and improved computational efficiency, } as demonstrated on synthetic and { DES-based problems}. 
We also demonstrate the asymptotic behavior of the incumbents generated by ASTRO-BFDF, which converges to the stationary point almost surely.

\section*{Acknowledgements}
This work was authored by the National Renewable Energy Laboratory, operated by Alliance for Sustainable Energy, LLC, for the U.S. Department of Energy (DOE) under Contract No. DE-AC36-08GO28308. Funding for the algorithmic development and numerical experiment work was provided by Laboratory Directed Research and Development investments. The views expressed in the article do not necessarily represent the views of the DOE or the U.S. Government. The U.S. Government retains and the publisher, by accepting the article for publication, acknowledges that the U.S. Government retains a nonexclusive, paid-up, irrevocable, worldwide license to publish or reproduce the published form of this work, or allow others to do so, for U.S. Government purposes.

\appendix

{
\section{Proof of Theorem \ref{thm:asfinitedelta2}}
\label{apdx:proofasfinitedelta2}
\begin{proof}
    Let us first set $\Ebar^{i,q}_{k}(N_k^i) = N(\BFX_k^i)^{-1}\sum^{N(\BFX_k^i)}_{j=1} E^{i,q}_{k,j}$ and $o_k = (\sigma^q_0)^2 \kappa^{-2} (\Delta_k^q)^{-4}$, implying $N(\BFX_k^i) \ge \lambda_k o_k$. Then we have
    \begin{equation*}
    \begin{split}
        \mbP\left\{|\Ebar^{i,q}_{k}(N_k^i)|>c_f(\Delta_k^q)^2 \middle| \mcF_{k-1}\right\} &
        \leq\mbP\left\{\sup_{n\geq\lambda_k o_k}|\Ebar^{i,q}_{k}(n)|>c_f (\Delta_k^q)^2\ \ \middle\vert\  \mcF_{k-1}\right\} \nonumber\\
        &\leq\sum_{n\geq\lambda_k o_k}\mbP\left\{\left|\frac{1}{n}\sum_{j=1}^{n}E^{i,q}_{k,j} \right| > c_f (\Delta_k^q)^2\ \middle\vert\ \mcF_{k-1}\right\}\nonumber\\   
        &\leq\sum_{n\geq\lambda_k o_k} 2 \exp{\left(-n\frac{c_f^2 (\Delta_k^q)^4}{(2 c_f (\Delta_k^q)^2 b^q +2(\sigma^q)^2)}\right)}\nonumber\\
        &= \sum_{n\geq 0}2 \exp\left(-c_k(\lambda_k o_k+n)\right)= 2 \frac{\exp\left(-\lambda_kc_k\nu_k\right)}{1-\exp(-c_k)},
    \end{split}        
    \end{equation*}
     where $c_k = \frac{c_f^2 (\Delta_k^q)^4}{2(c_f \Delta_{\max}^2 b^q + (\sigma^q)^2)}$. The third inequality is obtained using Assumption~\ref{assum:martingale} and Lemma~\ref{lem:bernstein}.
    Given that $\lambda_k=\lambda_0 (\log k)^{1+\epsilon_\lambda}$ and $c_k o_k = \mcO(1),$ there exists some $\varepsilon >0$ such that for sufficiently large $k$, the following holds.
    \begin{equation}
        \mbP\left\{|\Ebar_k^{i,q}(N_k^i)|\geq c_f (\Delta_k^q)^{2}\right\}=\mbE\left[\mbP\left\{|\Ebar_k^{i,q}(N_k^i)|\geq c_f (\Delta_k^q)^{2}\ \middle\vert\ \mcF_{k-1}\right\}\right]\leq k^{-1-\varepsilon}.\label{eq:prob-good-estimate}
    \end{equation}
    Since the right-hand side of~\eqref{eq:prob-good-estimate} is summable in $k$, the theorem holds.
\end{proof}
}

\section{Proof of Lemma \ref{lem:deltaconverge}} \label{apdx:proofdeltaconverge}
\begin{proof}
    Let us begin by noting that we have established from Step \ref{HF:delta-l-update} in Algorithm \ref{alg:TRO-MFDF} and Step \ref{LF:delta-h-update} in Algorithm \ref{alg:TRO-LFDF} that $\Delta_k^h \ge \Delta_k^{\ell}$ almost surely for any $k \in \mbN$. Hence, if $\Delta_k^h$ converges to zero almost surely, so does $\Delta_k^{\ell}$.
    Let us define the following index sets,
    \begin{equation*}
    \begin{split}
        \mcH &= \{k\in \mbN:(\rhohat_k > \eta) \cap (
        \mu\|\nabla M^h_k(\BFX_k^0) \| \ge \Delta_k^h) \cap (I_k^h \textit{ is True})\}, \\
        \mcL &= \{k\in \mbN:I_k^h \textit{ is False}\}.
    \end{split}    
    \end{equation*}
    From Assumption \ref{assum:fcd}, we have, for any $k \in \mcH$,
    \begin{equation} \label{eq:frd-out}
    \begin{split}
    \Ftilde(\BFX^0_{k}) - \Ftilde(\BFX^s_{k}) &\ge {\Ftilde(\BFX^0_{k}) - \Ftilde(\BFX^{s,h}_{k})} \ge \eta  [M^h_{k}(\BFX^0_{k}) - M^h_{k}(\BFX_{k}^{{s,h}})]  \\ 
    &\geq \frac{1}{2}\eta {\kappa_{fcd} \| \nabla M_k^h(\BFX^0_k) \| \min \Bigg \{\frac{\| \nabla M_k^h(\BFX^0_k)\|}{\|\sfH^h_{k} \|},\Delta^h_{k} \Bigg \}} > \kappa_R (\Delta_k^h)^2, 
    \end{split}
    \end{equation}
    where $\kappa_R= \min\{\eta\kappa_{fcd}{(2\mu)}^{-1}\min \{{(\mu\kappa^h_\sfH)}^{-1},1\},\zeta\}$.
    Note that \eqref{eq:frd-out} holds regardless of whether $\BFX_k^s$ comes from minimizing $M_k^{\ell}$ or $M_k^h$. We also obtain from Step \ref{LF:success-ratio} in Algorithm \ref{alg:TRO-LFDF} that, for any $k \in \mcL$,
    \begin{equation}
    \label{eq:frd-in}
     \Ftilde(\BFX^0_{k}) - \Ftilde(\BFX^s_{k})
    \ge \zeta(\Delta_k^h)^2 \ge \kappa_R(\Delta_k^h)^2.   
    \end{equation}
    Hence, for any $k \in \mcK= \mcH \cup \mcL$,
    \begin{equation*}
    \begin{split}
    \kappa_R\sum_{\substack{k\in\mcK }}(\Delta_k^h)^2 &\le \sum_{\substack{k\in\mcK }} (f^h(\BFX_{k}) - f^h(\BFX_{k+1}) + \Etilde_k^0-\Etilde_k^s) \le f^h(\BFx_0) - f^h_* +\sum_{k=0}^{\infty}|\Etilde_k^0-\Etilde_k^s|,
    \end{split}
    \end{equation*}
    where $f^h_*$ is the optimal value of $f^h$.
     We note that $\mcH$ and $\mcL$ are disjoint sets and for any $k \not\in \mcK$, $\Delta_{k+1}^h = \gamma_2 \Delta^h_k$.
    Let $\mcK = \{k_1,k_2,\dots\}$, $k_0 = -1,$ and $\Delta^h_{-1}=\Delta^h_0/\gamma_2$. Then from the fact that $\Delta^h_k\le \gamma_1\gamma_2^{k-k_i-1}\Delta^h_{k_i}$ for $k=k_i+1,\dots,k_{i+1}$ and each $i$, we obtain 
    \begin{equation*}   
    \sum_{k=k_i+1}^{k_{i+1}}(\Delta_k^h)^2 \le \gamma_1^2(\Delta^h_{k_i})^2\sum_{k=k_i+1}^{k_{i+1}}\gamma_2^{2(k-k_i-1)} 
    \le \gamma_1^2(\Delta_{k_i}^h)^2\sum_{k=0}^{\infty}\gamma_2^{2k} = \frac{\gamma_1^2}{1-\gamma_2^2}(\Delta_{k_i}^h)^2.
    \end{equation*}
    By Lemma \ref{lem:mfmc-asfinite} and the fact that $\Delta_k^{\ell} \le \Delta_k^h$, there must exist a sufficiently large $K_\Delta$ such that $|\Etilde_k^0-\Etilde_k^s| < c_{\Delta}(\Delta^h_k)^2$ for any given $c_{\Delta} > 0$ and any $k \ge K_\Delta$. Then, we have 
    \begin{align*}
    \sum_{k=0}^{\infty}(\Delta^h_k)^2  &\le \frac{\gamma_1^2}{1-\gamma_2^2}\sum_{i=0}^{\infty}(\Delta_{k_i}^h)^2< \frac{\gamma_1^2}{1-\gamma_2^2}\left(\frac{(\Delta^h_0)^2}{\gamma_2^2}+\frac{f^h(\BFx_0)-f^h_*+E'_{0,\infty}}{\kappa_R}\right)\\
    &<\frac{\gamma_1^2}{1-\gamma_2^2}\left(\frac{(\Delta^h_0)^2}{\gamma_2^2}+\frac{f^h(\BFx_0)-f^h_*+E'_{0,K_\Delta-1}+E'_{K_\Delta,\infty}}{\kappa_R}\right),
    \end{align*}
    where $E'_{i,j} = \sum_{k=i}^{j}|\Etilde_k^0-\Etilde_k^s|$. 
    Then we get from $E'_{K_\Delta,\infty} < c_{\Delta}\sum_{K_\Delta}^\infty(\Delta^h_k)^2$ that
    \begin{align*} 
    \sum_{k=K_\Delta}^{\infty}(\Delta^h_k)^2 &< \frac{\gamma_1^2}{1-\gamma_2^2}\left(\frac{(\Delta^h_0)^2}{\gamma_2^2}+\frac{f^h(\BFx_0)-f^h_*+E'_{0,K_\Delta-1}}{\kappa_R}\right)\left(1-\frac{\gamma_1^2}{1-\gamma_2^2}\frac{c_\Delta}{\kappa_R}\right)^{-1}
    \end{align*}
   Therefore, $\Delta^h_k \xrightarrow[]{w.p.1} 0\text{ as } k \rightarrow \infty$ and the statement of the theorem holds. 
\end{proof}

{
\section{Proof of Theorem \ref{lem:successful-iter}}
\label{apdx:proof-suc-iter}
The proof trivially follows from  Lemma 4.4 with the adaptive sampling rule (A-0) in \cite{ha2023}.
\begin{proof}
    Let $\omega \in \Omega$ and we will omit $\omega$ to simplify notation. We first note that for any $k \in \mbN$, when the minimizer of the LF-based local model in Algorithm \ref{alg:TRO-LFDF} is accepted as a next iterate, $I_k^h$ is already False. Otherwise, the HF-based local model is constructed in Algorithm \ref{alg:TRO-MFDF}. Then we have
    \begin{equation} \label{eq:apdx-model}
        M_k^h(\BFX_k^{s,h}) = \Ftilde(\BFX_k^0) + \nabla M_k^h(\BFX_k^0)^\intercal \BFS_k + \frac{1}{2} \BFS_k^\intercal \sfH^h_k \BFS_k,
    \end{equation}
    where $\BFS_k$ is the step size. We also know from Taylor's theorem that
    \begin{equation} \label{eq:apdx-fn}
         \Ftilde(\BFX_k^{s,h}) = f^h(\BFX^{0}_k) + \nabla f^h(\BFX^{0}_k)^\intercal \BFS_k + \int_0^1 \left(\nabla f^h(\BFX_k^{0}+t\BFS_k)- \nabla f^h(\BFX_k^{0})\right)^\intercal\BFS_k \mathrm{d}t +  \Etilde_k^{s}.
    \end{equation}
    By subtracting \eqref{eq:apdx-model} by \eqref{eq:apdx-fn}, we can obtain, for sufficiently large $k$,    
    \begin{align}
    |M_k^h(\BFX_k^{s,h}) - \Ftilde(\BFX_k^{s,h})| &\leq |(\nabla M_k^h(\BFX_k^{0}) -\nabla f^h(\BFX_k^{0}))^\intercal\BFS_k|+ \frac{1}{2}\left|\BFS_{k}^\intercal  \sfH^h_{k} \BFS_{k}\right| + |\Etilde_k^0-\Ebar_k^{s}|
    \nonumber\\
    & +\left|\int_0^1 \left(\nabla f(\BFX_k^{0}+t\BFS_k)- \nabla f(\BFX_k^{0})\right)^\intercal\BFS_k \mathrm{d}t\right| \nonumber\\
    & \leq (\kappa_{eg1} + 2p\kappa_{eg2}c_f) (\Delta_k^h)^2 + 2c_f (\Delta_k^h)^2 + \frac{1}{2} (\kappa_{Lg}+\kappa^h_{\sfH})(\Delta_k^h)^2,
\label{eq:bound-model-pred}\end{align}
where the last inequality follows from \eqref{eq:gradient-gap}, Lemma \ref{lem:mfmc-asfinite}, and Assumptions \ref{assum:fn} and \ref{assum:hessian-norm}. Moreover, we have from Assumption \ref{assum:fcd} and $\Delta_k^h \le c_d \|\nabla M_k^h(\BFX_k^0)\|$ that
\begin{equation}
\label{eq:model-reduction}
\begin{split}    
    M_k^h(\BFX_k^{0})-M_k^h(\BFX_k^{s,h}) &\ge \frac{1}{2}\kappa_{fcd} \| \nabla M^h_k(\BFX_k^{0})\| \min\left\{ \frac{\| \nabla M^h_k(\BFX_k^{0})\|}{\| \nabla^2 M^h_k(\BFX_k^{0})\|}, \Delta_k^h \right\}\\
    &\ge \frac{1}{2 c_d}\kappa_{fcd} \min\left\{\frac{1}{c_d\kappa^h_{\sfH}},1\right\} (\Delta_k^h)^2
\end{split}
\end{equation}
Then we obtain from \eqref{eq:bound-model-pred} and \eqref{eq:model-reduction} that the success ratio becomes
\begin{equation*}
    |1-\rhohat_k| = \frac{|M_k^h(\BFX_k^{s,h})-\Ftilde(\BFX_k^{s,h})|}{|M_k^h(\BFX_k^0)-M_k^h(\BFX_k^{s,h})|} \le \frac{(\kappa_{eg1} + 2p\kappa_{eg2}c_f + 2c_f + \frac{1}{2} (\kappa_{Lg}+\kappa^h_{\sfH}))(\Delta_k^h)^2}{(2c_d)^{-1}\kappa_{fcd} \min\{(c_d \kappa_{\sfH}^h)^{-1},1\} (\Delta_k^h)^2}.
\end{equation*}
As a result, when $c_d \le ((1-\eta)\kappa_{cfd})(2\kappa_{eg1} + 4p\kappa_{eg2}c_f + 4c_f +  \kappa_{Lg}+\kappa^h_{\sfH})^{-1},$ we have $\rhohat_k \ge \eta,$ proving the sought result.
\end{proof}
}

\section{Implementation Details} \label{apdx:implementation}
{The two most important hyperparameters are the step sizes and sample sizes for the solvers. To tune them, we tested a range of values for each solver on synthetic problems with the cost ratio $1:0.1$ and problems with DES. In this section, we present the detailed setup for the optimizers used in Section \ref{sec:numerical}, including the hyperparameter tuning of sample sizes and step sizes. We begin with ASTRO-DF and ASTRO-BFDF.
}

\subsection{{Setup for ASTRO-DF and ASTRO-BFDF}}
{ASTRO-DF and ASTRO-MFDF } used the same parameters (e.g., TR radius $\Delta_k$, success ratio $\eta_1$) where possible. 
In terms of the design set selection for the model construction, ASTRO-DF has used $2d+1$ design points with the rotated coordinate basis (See history-informed ASTRO-DF \cite{ha2023green}). In the bi-fidelity scenario, we have employed two distinct design sets ($\mcX_k$ and $\mcX_k^{\ell}$) at Step \ref{ASBFTRO:designsetselect} in Algorithm \ref{alg:TRO-MFDF} and Step \ref{ASLFTRO:designsetselect} in Algorithm \ref{alg:TRO-LFDF} respectively. $\mcX_k$ is selected to construct the local model for the HF function, implying that the computational costs for estimating the function value at $\mcX_k$ is relatively high. Hence, the design set will be selected by reusing the design points within the TR and the corresponding replications as much as possible. To achieve this, we first pick $d+1$ design points to obtain sufficiently affinely independent points by employing Algorithm 4.2 in \cite{wild2008orbit}. After that, we pick additional $d$ design points following the opposite direction to construct the quadratic interpolation model with diagonal Hessian. $\mcX_k^{\ell}$ consists of $2d+1$ design points, selected using the coordinate basis to minimize deterministic error owing to the lower cost of the LF oracle. In this scenario, the design set $\mcX_k^{\ell}$ is optimally designed for design sets of any size ranging from $d+2$ to $2d+1$ (see \cite{tom2023optimalpoised}). 

\begin{figure} [htp]
\centering
\subfloat[ASTRO-BFDF on synthetic problems]{%
\resizebox*{7cm}{!}{\includegraphics{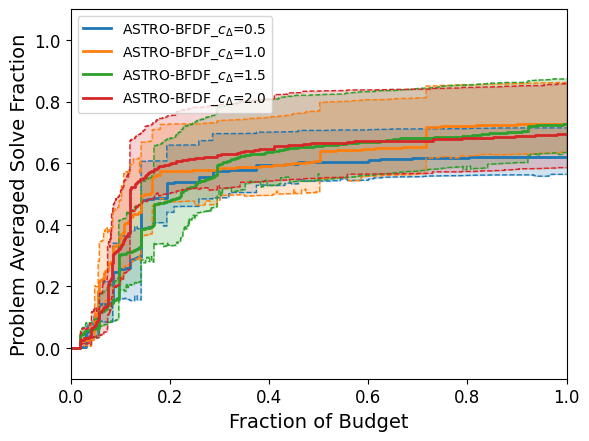}}\label{fig:syn-astrobfdf}}
\subfloat[ASTRO-DF on synthetic problems]{%
\resizebox*{7cm}{!}{\includegraphics{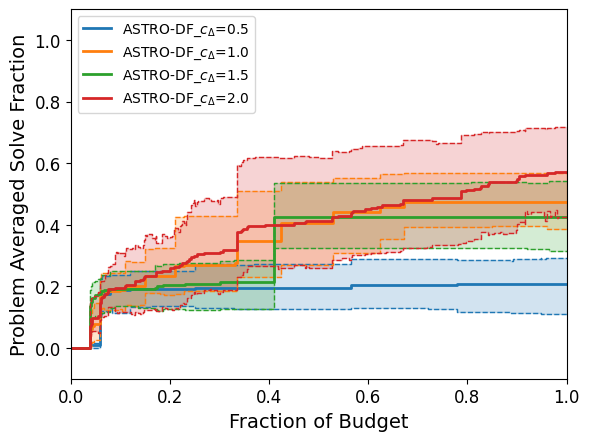}}\label{fig:syn_astrodf}}\\

\subfloat[ASTRO-BFDF on problems with DES]{%
\resizebox*{7cm}{!}{\includegraphics{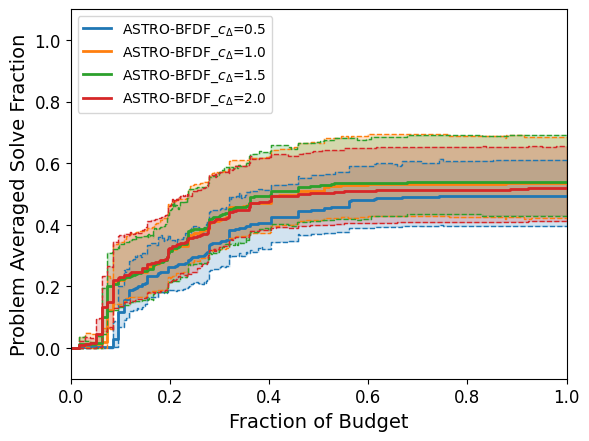}}\label{fig:des-astrobfdf}}
\subfloat[ASTRO-DF on problems with DES]{%
\resizebox*{7cm}{!}{\includegraphics{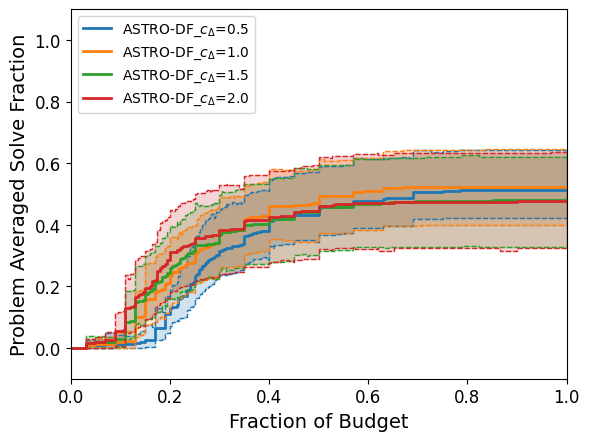}}\label{fig:des-astrodf}}
\caption{{Solvability profiles with 1\% optimality gap for hyperparameter tuning of ASTRO-MFDF and ASTRO-DF.}}
\label{fig:ht-astro-syn}
\end{figure}

{For hyperparameter tuning, since the sample size and step size controlled by trust-region radius, we focused on the initial trust-region radius, which is the primary hyperparameter affecting performance. When box constraints exist, it is natural to set the initial trust region as a portion of the feasible region. Since all synthetic problems have box constraints, we set $\Delta_0^h$ and $\Delta_k^{\ell}$ to $c_{\Delta} \times 0.1 \times \min_{i \in {1,2,\dots,d}} (u_i - l_i)$, where $c_\Delta$ is a positive constant, and $u_i$ and $l_i$ denote the upper and lower bounds for the $i$-th coordinate, respectively. We first tested $c_{\Delta} \in \{0.5,1,1.5,2\}$ on synthetic problems. See Figure \ref{fig:syn-astrobfdf} and \ref{fig:syn_astrodf}. In contrast, DES-based problems do not have box constraints. Instead, each problem can generate random solutions based on the input parameters provided by SimOpt \cite{simoptgithub}. Accordingly, we generated $1000\times d$ random solutions $\{\BFX^r\}$, where $r$ indexes each of the $1000\times d$ solutions, and set $\Delta_0^h$ and $\Delta_0^{\ell}$ to $c_{\Delta} \sum_{i=1}^d (\max_r\{X^r_i\}-\min_r\{X^r_i\})$, where $X_i^r$ denoted the $i$-th coordinate of $\BFX^r$. We tested $c_{\Delta} \in \{0.5,1.0,1.5,2\}$ on DES-based problems as well. See Figure \ref{fig:des-astrobfdf} and \ref{fig:des-astrodf}. As a result, we use $c_\Delta = 2$ for ASTRO-BFDF and ASTRO-DF in Section~\ref{sec:numerical}. }

\newpage
\subsection{{Setup for ADAM and BFSAG}}
{Since ADAM and BFSAG are designed for settings where gradient information is directly available, we approximate the gradient for the targeted problem \eqref{eq:problem} using the finite-difference method, with the perturbation constant set equal to the step size. The sample sizes $n$ is used to estimate the function value and the gradient is approximated using the function estimates. In BFSAG, the sample size for high-fidelity simulations is set to $n$, while the sample size for low-fidelity simulations is set to $2n$. For hyper-parameter tuning, we first fix $n$ as 10 and tested variety step sizes from 0.01 to 2 (Figure \ref{fig:adam_lr} and \ref{fig:bfsag_lr}). After that, we chose the best step sizes for each solver and tested various $n \in \{10,20,30\}$ (Figure \ref{fig:adam_ss} and \ref{fig:bfsag_ss}). As a result, $\text{lr}=1$ and $n=10$ for ADAM and $\text{lr}=0.01$ and $n=10$ for BFSAG is used in the synthetic problems presented in Section \ref{sec:numerical}.}

\begin{figure} [htp]
\centering
\subfloat[ADAM: Tuning step sizes ($\text{lr}$)]{%
\resizebox*{7cm}{!}{\includegraphics{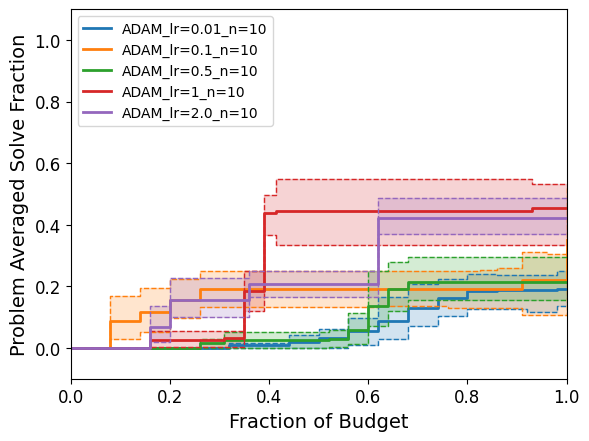}}\label{fig:adam_lr}}
\subfloat[ADAM: Tuning sample sizes ($n$)]{%
\resizebox*{7cm}{!}{\includegraphics{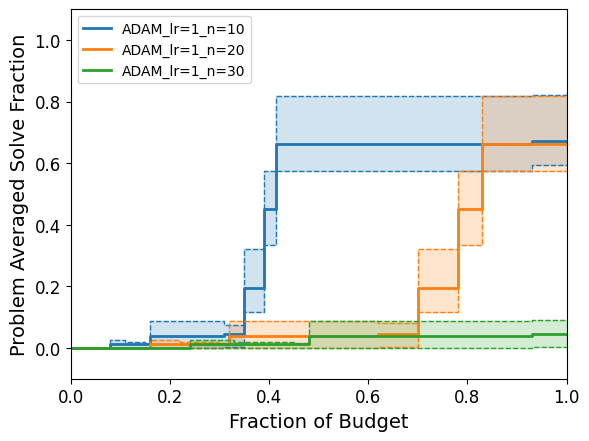}}\label{fig:adam_ss}}\\

\subfloat[BFSAG: Tuning step sizes ($\text{lr}$)]{%
\resizebox*{7cm}{!}{\includegraphics{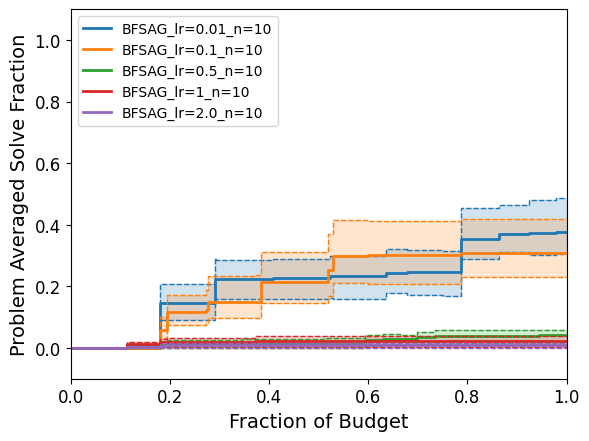}}\label{fig:bfsag_lr}}
\subfloat[BFSAG: Tuning sample sizes ($n$)]{%
\resizebox*{7cm}{!}{\includegraphics{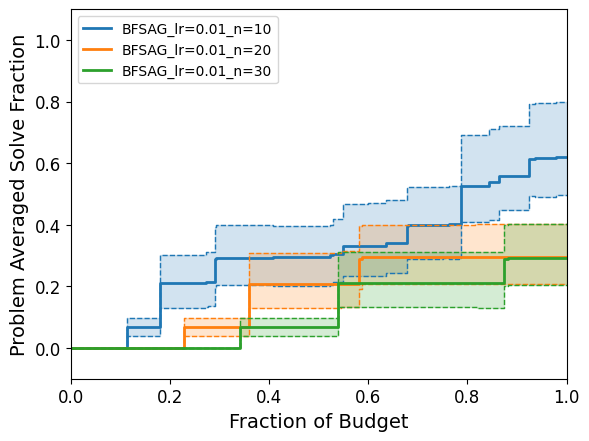}}\label{fig:bfsag_ss}}
\caption{{Solvability profiles for hyperparameter tuning of ADAM and BFSAG on synthetic problems. (a) and (c) show the impact of different step sizes with a fixed sample size $(n=10)$, while (b) and (d) show the impact of different sample sizes with the best step size found in (a) and (c).}}
\label{fig:ht-adam-bfsag}
\end{figure}

{For the DES-based problems, we again tuned the step sizes first with $n=10$, and then tuned the sample sizes. We used the same step size setup as ASTRO-BFDF to ensure a fair comparison, i.e., $\text{lr} = c_{\Delta} \sum_{i=1}^d (\max_r\{X^r_i\}-\min_r\{X^r_i\})$, where $X_i^r$ denoted the $i$-th coordinate of $\BFX^r$. We tested $c_{\Delta} \in \{0.5,1.0,1.5,2\}$ and $n\in\{10,20,30\}$ on DES-based problems as well. See Figure \ref{fig:des-astrobfdf}. As result, we use $c_{\Delta} = 2$ and $n=10$ for ADAM and BFSAG on the DES-based problems in Section \ref{sec:numerical}.}

\begin{figure} [htp]
\centering
\subfloat[ADAM: Tuning step sizes ($c_{\Delta}$)]{%
\resizebox*{7cm}{!}{\includegraphics{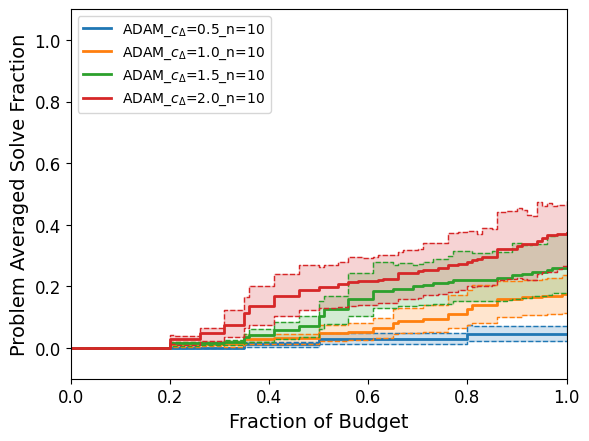}}\label{fig:adam_lr}}
\subfloat[ADAM: Tuning sample sizes ($n$)]{%
\resizebox*{7cm}{!}{\includegraphics{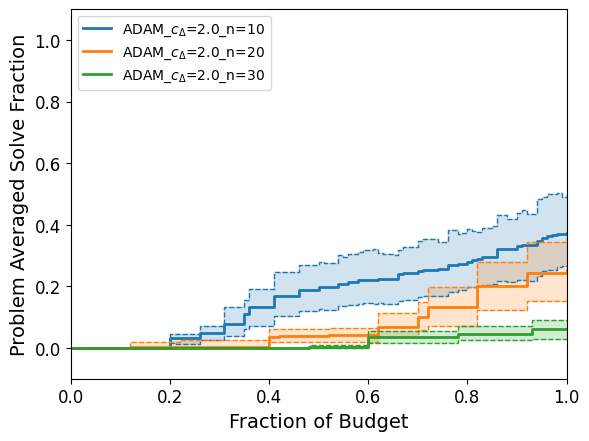}}\label{fig:adam_ss}}\\

\subfloat[BFSAG: Tuning step sizes ($c_{\Delta}$)]{%
\resizebox*{7cm}{!}{\includegraphics{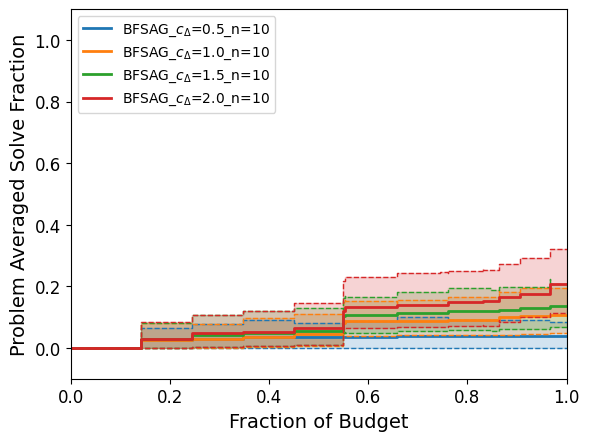}}\label{fig:bfsag_lr}}
\subfloat[BFSAG: Tuning sample sizes ($n$)]{%
\resizebox*{7cm}{!}{\includegraphics{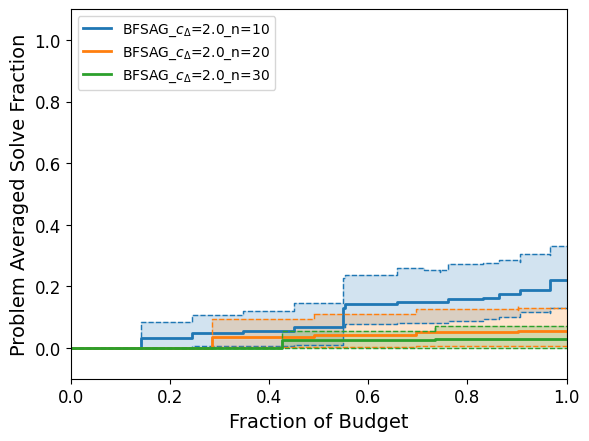}}\label{fig:bfsag_ss}}
\caption{{Solvability profiles for hyperparameter tuning of ADAM and BFSAG on problems with DES. (a) and (c) show the impact of different step sizes with a fixed sample size $(n=10)$, while (b) and (d) show the impact of different sample sizes with the best step size found in (a) and (c).}}
\label{fig:ht-adam-bfsag}
\end{figure}

\newpage
\subsection{{Setup for BFBO}}
{Since BFBO is a global optimizer, a step size is not required. Instead, the next candidate is selected by optimizing an acquisition function. Expected improvement combined with stochastic co-kriging is used as the acquisition function, following the widely adopted standard for bi-fidelity Bayesian optimization discussed in Section \ref{sec:intro}. The stochastic co-kriging model is implemented using SMT \cite{saves2024smt} in Section \ref{sec:numerical}. With the similar setup with BFSAG, the sample size for high-fidelity simulations is set to $n$, while the sample size for low-fidelity simulations is set to $2n$. In particular, when the new design points is selected by minimizing the expected improvement, the HF and LF function estimates are obtained by \eqref{eq:crude-mc} with $n$ and $2n$ replications, respectively. The co-kriging model is then retrained using all available function estimates. For hyper-parameter tuning, we tested $n \in \{10,20,30\}.$ See Figure \ref{fig:ht-bfbo}. While $n=10$ provides the best setup, generating a single sample path of $\{\BFX_k\}$ can take over an hour for the 20-dimensional stochastic Rosenbrock function with a budget of 4,000 high-fidelity evaluations, due to the need to retrain the co-kriging model each time a new design point is explored. For reference, generating a sample path using ASTRO-MFDF typically takes less than five minutes with the same setup.}

\begin{figure} [htp]
\centering
\subfloat[On synthetic problems]{%
\resizebox*{7cm}{!}{\includegraphics{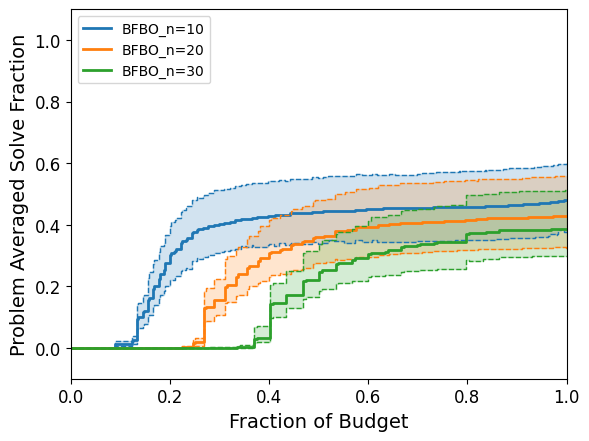}}\label{fig:syn_bfbo}}
\subfloat[On problems with DES]{%
\resizebox*{7cm}{!}{\includegraphics{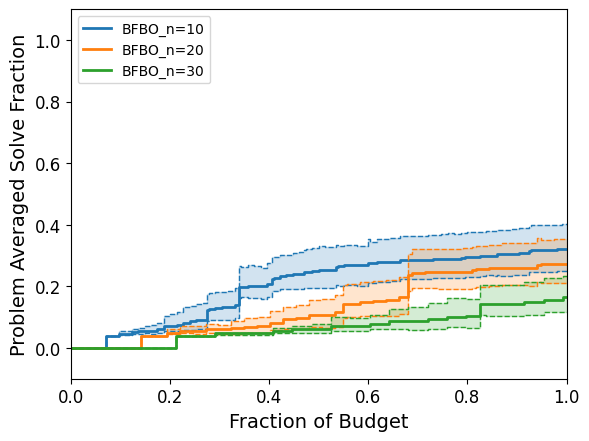}}\label{fig:des_bfbo}}
\caption{{Solvability profiles with 1\% optimality gap for hyperparameter tuning of BFBO.}}
\label{fig:ht-bfbo}
\end{figure}

\section{{Bi-fidelity Deterministic Functions}}
\label{apdx:function-des}
{This section provides details on the bi-fidelity deterministic functions used in Section \ref{sec:numerical}. Four deterministic HF functions are used in total: three benchmark problems—Branin, Colville, and Forretal functions—are taken from \cite{song2019radial}, while the fourth, the Rosenbrock function, is adapted for bi-fidelity optimization as introduced in \cite{mainini2022mfexample}. We begin by presenting the closed-form expressions of the functions and the corresponding problem dimensions (see Table \ref{tab:equations}). Since Branin, Colville, and Rosenbrock are multi-dimensional functions, all variables are fixed except $x[1]$, which is varied to visualize the loss landscape and provide intuition about the effect of $\kappa_{cor}$ (See Figure \ref{fig:loss-branin},\ref{fig:loss-colville}, and \ref{fig:loss-rosen}). The loss landscapes for the Forretal functions are shown in Figure \ref{fig:for-loss}. Overall, as $\kappa_{cor}$ increases, the LF function $f^{\ell}$ appears to provide more useful information for optimizing the HF function $f^h$.  }

\begin{figure} [htp]
\centering
\subfloat[$\kappa_{cor}=0.1$]{%
\resizebox*{7cm}{!}{\includegraphics{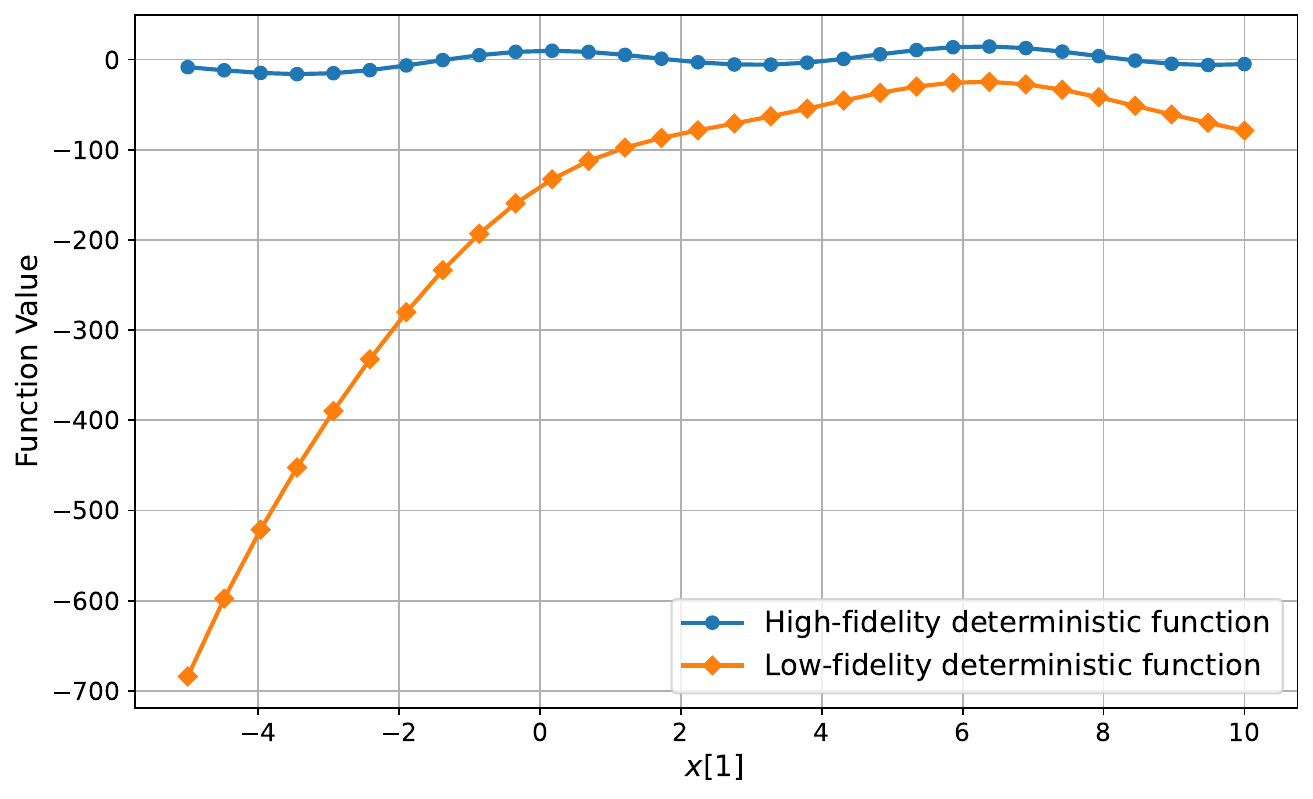}}\label{fig:loss-branin-0.1}}
\subfloat[$\kappa_{cor}=0.9$]{%
\resizebox*{7cm}{!}{\includegraphics{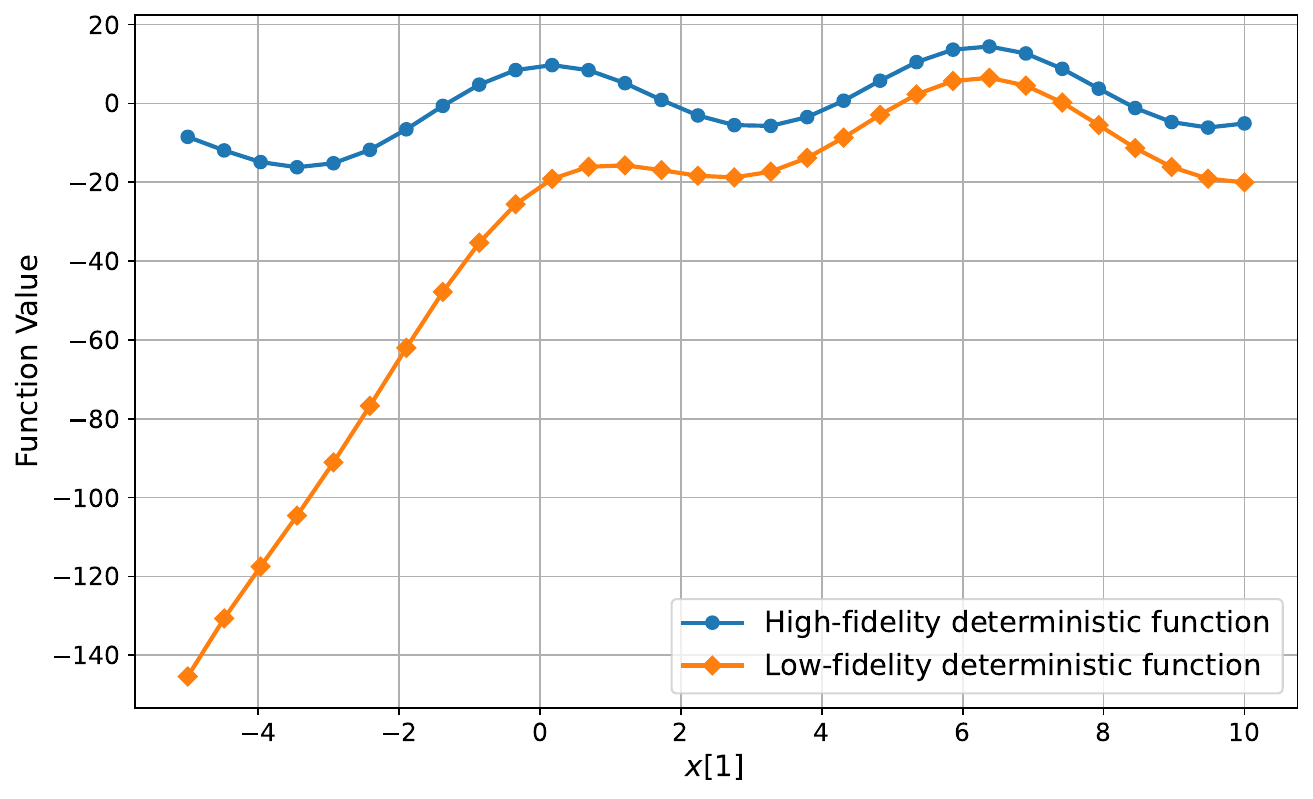}}\label{fig:loss-branin-0.9}}
\caption{{Marginalized loss landscapes of the Branin function with two different $\kappa_{cor}$ values, obtained by varying $x[1]$ while keeping all other elements fixed.}}
\label{fig:loss-branin}
\end{figure}

\begin{figure} [htp]
\centering
\subfloat[$\kappa_{cor}=0.1$]{%
\resizebox*{7cm}{!}{\includegraphics{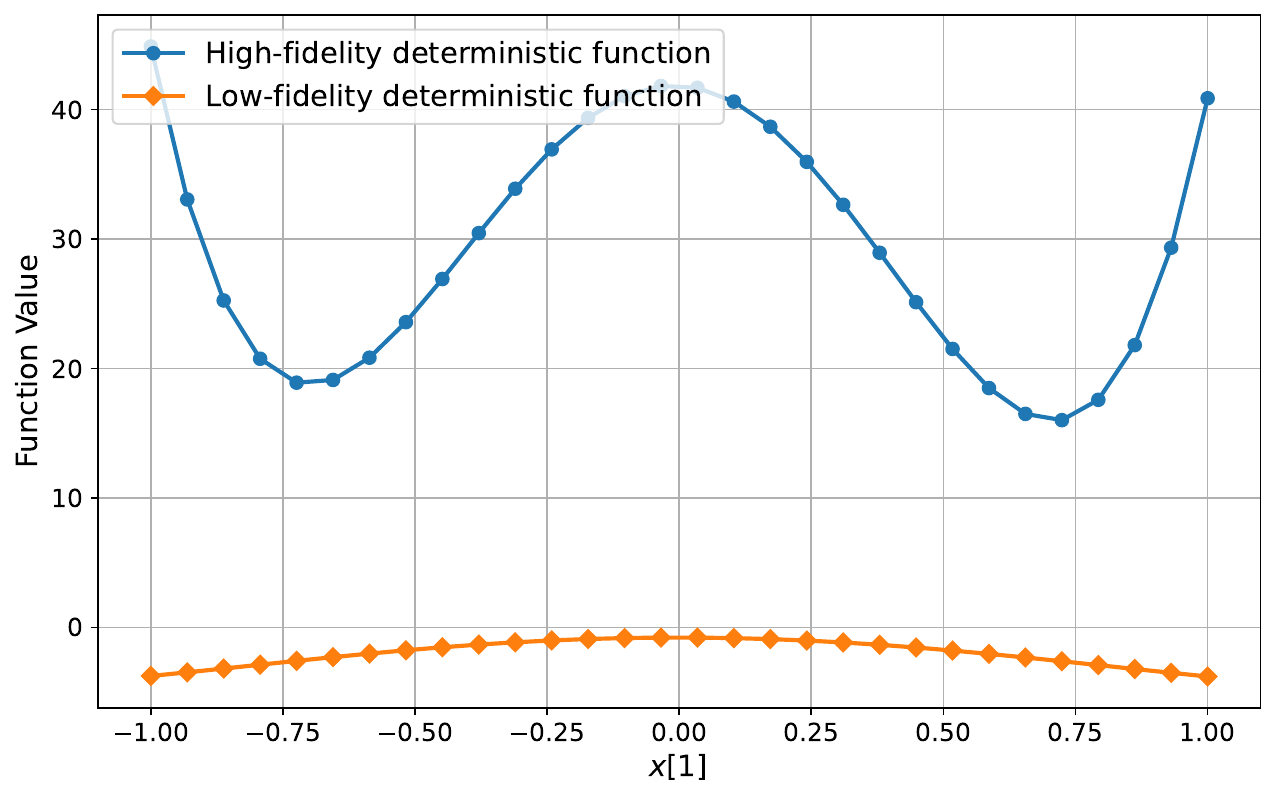}}\label{fig:loss-branin-0.1}}
\subfloat[$\kappa_{cor}=0.9$]{%
\resizebox*{7cm}{!}{\includegraphics{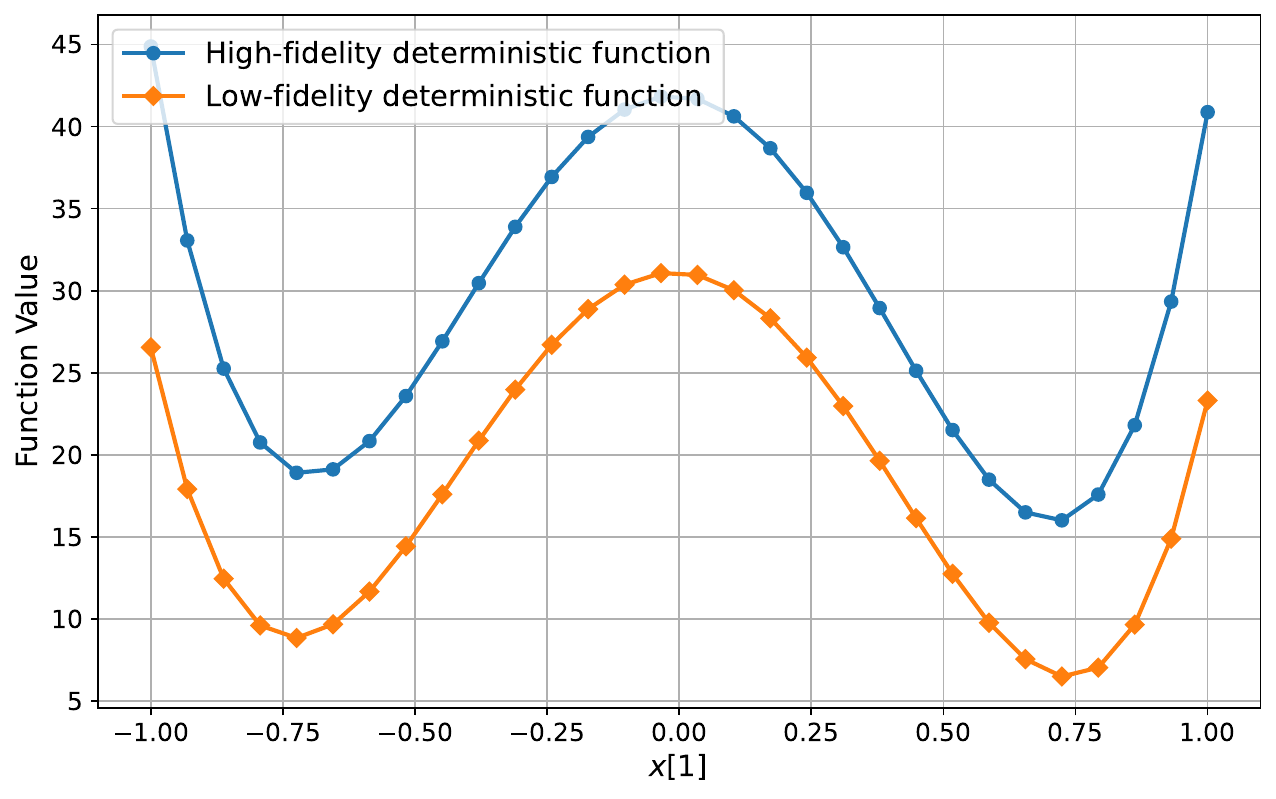}}\label{fig:loss-branin-0.9}}
\caption{{Marginalized loss landscapes of the Colville function with two different $\kappa_{cor}$ values, obtained by varying $x[1]$ while keeping all other elements fixed.}}
\label{fig:loss-colville}
\end{figure}

\begin{figure} [htp]
\centering
\subfloat[$\kappa_{cor}=0.1$]{%
\resizebox*{7cm}{!}{\includegraphics{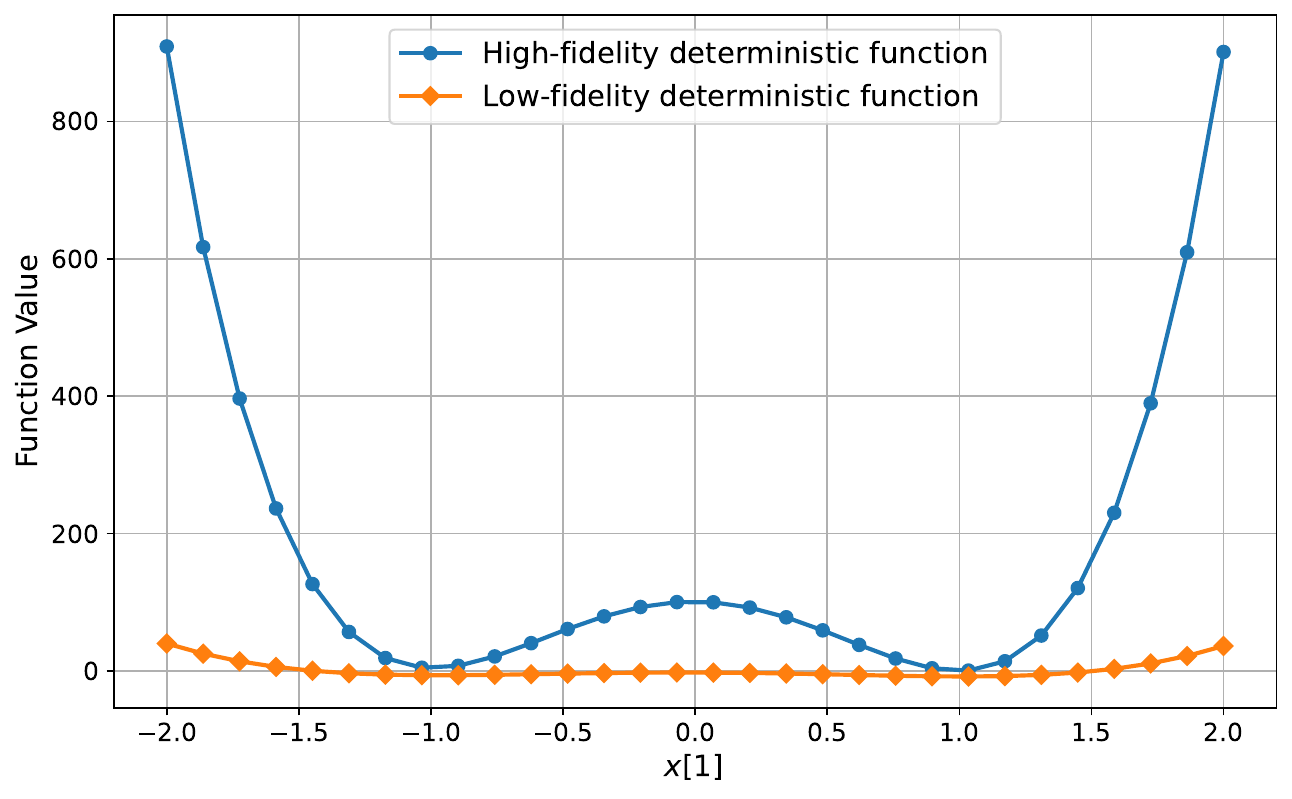}}\label{fig:loss-branin-0.1}}
\subfloat[$\kappa_{cor}=0.9$]{%
\resizebox*{7cm}{!}{\includegraphics{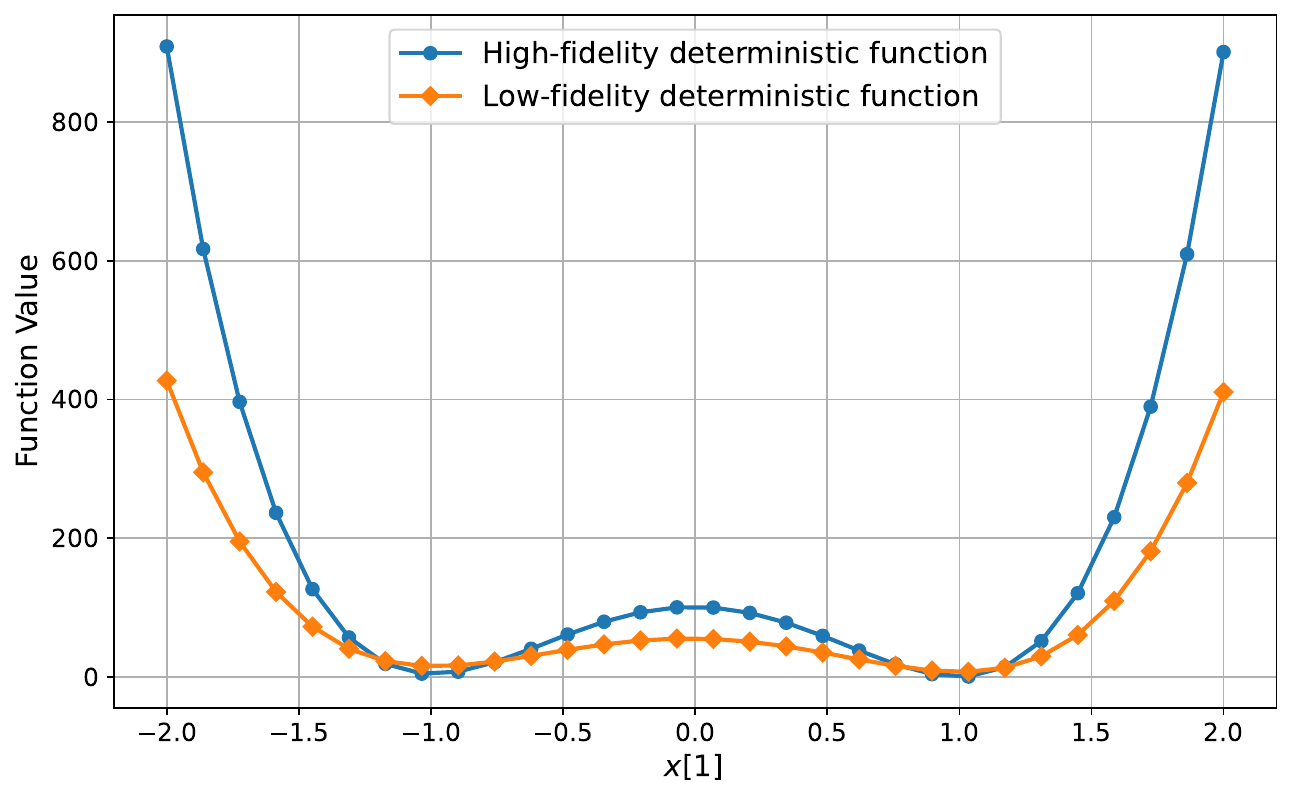}}\label{fig:loss-branin-0.9}}
\caption{{Marginalized loss landscapes of the Rosenbrock function with two different $\kappa_{cor}$ values, obtained by varying $x[1]$ while keeping all other elements fixed.}}
\label{fig:loss-rosen}
\end{figure}

\begin{table}[!htbp]
\vspace{0.1in}
\centering
\caption{Bi-fidelity deterministic functions}
\label{tab:equations}
\begin{tabular}{|l|l|c|}
\hline
\textbf{Function} & \textbf{Equation(s)} & \textbf{Dimension} \\ \hline\hline

\multirow{2}{*}{Forretal} 
  & $\cdot\ f^h(\BFx) = (6x - 2)^2 \sin(12x - 4)$ & \multirow{2}{*}{1} \\
  & $\cdot\ f^{\ell}(\BFx) = (-2 - \kappa_{cor}^2 +4\kappa_{cor}) f^h + 10(x - 0.5) - 5$ & \\ \hline

\multirow{4}{*}{Branin} 
  & $\cdot\ f^h(\BFx) = \left(x[2] - \frac{5.1}{4\pi^2}x[1]^2 + \frac{5}{\pi}x[1] - 6\right)^2$ & \multirow{4}{*}{2} \\
  & \qquad\qquad\qquad\qquad$+ 10\left(1 - \frac{1}{8\pi}\right)\cos{x[1]} + 10$ & \\
  & $\cdot\ f^{\ell}(\BFx) = f^h - \left(0.5 \kappa_{cor}^2 -2 \kappa_{cor} + 1.7\right)$ & \\
  & \qquad\qquad\qquad\qquad$\cdot \left(x[2] - \frac{5.1}{4\pi^2}x[1]^2 + \frac{5}{\pi}x[1] - 6\right)^2$ & \\ \hline

\multirow{5}{*}{Colville} 
  & $\cdot\ f^h(\BFx) = 100(x[1]^2 - x[2])^2 + (x[1] - 1)^2 + (x[3] - 1)^2$ & \multirow{6}{*}{4} \\
  & \qquad\qquad$+ 10.1\left((x[2] - 1)^2 + (x[4] - 1)^2\right)$ & \\
  & \qquad\qquad$+ 19.8(x[2] - 1)(x[4] - 1) + 90(x[3]^2 - x[4])^2$ & \\
  & $\cdot\ f^{\ell}(\BFx) = f^h\left(\kappa_{cor}^2\BFx\right)$ & \\
  & \qquad\qquad$- (\kappa_{cor}+0.5)\left(5x[1]^2 + 4x[2]^2 + 3x[3]^2 + x[4]^2\right)$ & \\ \hline

\multirow{3}{*}{Rosenbrock} 
  & $\cdot\ f^h(\mathbf{x}) = \sum_{i=1}^{D-1} \left[100(x[i+1] - x[i]^2)^2 + (1 - x[i])^2\right]$ & \multirow{3}{*}{20} \\
  & $\cdot\ f^{\ell}(\mathbf{x}) = \kappa_{cor}\sum_{i=1}^{D-1} \left[50(x[i+1] - x[i]^2)^2 + (-2 - x[i])^2\right]$ & \\
  & \qquad\qquad$-\sum_{i=1}^{D} 0.5x[i]$ & \\ \hline
\end{tabular}
\end{table}

\newpage
\bibliography{main-paper}   

\end{document}